\numberwithin{equation}{section}
\newtheorem{prop}{Proposition}
\newtheorem{lemma}[prop]{Lemma}
\newtheorem{thm}[prop]{Theorem}
\newtheorem{cor}[prop]{Corollary}
\numberwithin{prop}{section}
\theoremstyle{definition}
\newtheorem{defn}[prop]{Definition}
\newtheorem{ex}[prop]{Example}
\newtheorem{rmk}[prop]{Remark}
\newcommand{\brs}[1]{\left| #1 \right|}
\newcommand{\gD}{\Delta}
\newcommand{\gs}{\sigma}
\newcommand{\gl}{\lambda}
\newcommand{\gw}{\omega}
\newcommand{\ga}{\alpha}
\newcommand{\gb}{\beta}
\newcommand{\N}{\nabla}
\newcommand{\FF}{\mathcal F}
\renewcommand{\bar}[1]{\overline{#1}}
\renewcommand{\i}{\sqrt{-1}}
\newcommand{\IP}[1]{\left<#1\right>}
\DeclareMathOperator{\Rc}{Rc}
\DeclareMathOperator{\tr}{tr}
\DeclareMathOperator{\Id}{Id}
\DeclareMathOperator{\divg}{div}
\DeclareMathOperator{\spn}{span}
\DeclareMathOperator{\Spin}{Spin}
\begin{document}

\title[The canonical symmetry reduction of string backgrounds]{The canonical symmetry reduction of string backgrounds}

\date{\today}

\begin{abstract} String backgrounds, defined here as metric connections with skew-symmetric torsion and reduced holonomy, yield generalized Ricci solitons relative to the Lee vector field.  By a variational argument using the string action, they are also gradient generalized Ricci solitons relative to a potential function.  These two observations combine to yield a canonical symmetry, and in this work we derive fundamental features of the transverse geometry, and rigidity phenomena.  We prove in a unified conceptual fashion that the transverse geometry satisfies the string generalized Ricci soliton equations (a simplified Hull-Strominger system) in many settings including almost Hermitian, almost contact, $SU(3)$, $G_2$, and $\Spin(7)$ geometry.  We also show that the transverse geometry is always conformally co-closed, with the conformal factor given by the associated soliton potential.
\end{abstract}

\author{Aaron Kennon}
\address{Rowland Hall\\
         University of California, Irvine\\
         Irvine, CA 92617}
\email{\href{mailto:kennona@uci.edu}{kennona@uci.edu}}

\author{Jeffrey Streets}
\address{Rowland Hall\\
         University of California, Irvine\\
         Irvine, CA 92617}
\email{\href{mailto:jstreets@uci.edu}{jstreets@uci.edu}}

\thanks{The authors are supported by the NSF via DMS-2342135.  They would like to thank Beatrice Brienza and Stefan Ivanov for pointing out errors in an earlier draft.}

\maketitle


%
%
%
%
\section{Introduction}

String backgrounds arise via metric connections $\N$ with skew-symmetric torsion admitting a parallel spinor and reduced holonomy, yielding stringy generalizations of the classical special holonomy manifolds i.e. Calabi-Yau, $G_2$, and $\Spin(7)$ manifolds.  In physics literature, constraints motivated by the standard model are placed on the torsion, leading to different flavors of the physical Hull-Strominger system, a subject of intense interest in recent years (cf. for instance \cite{becker2003compactifications, becker2004compactifications, cardoso2003non,garcia2024pluriclosed, gauntlett2004superstrings, gutowski2002deformations, Ivanovstring, ivanov2001no,phong2019geometric,picard2024strominger,StromingerSST} for the case of $SU(n)$ holonomy on $2n$-manifolds, \cite{lotay2024coupled,de2025mathrm,de2017restrictions, de2018infinitesimal,friedrich2002parallel, friedrich2003killing, galdeano2024heterotic,gauntlett2001fivebranes, ivanov2023riemannianG2} for $G_2$ holonomy on $7$-manifolds, and \cite{gauntlett2004g, gauntlett2001fivebranes,ivanov2023riemannianspin7} for $\Spin(7)$ holonomy on $8$-manifolds).  In this work we will focus on the geometrically motivated case when the torsion is simply closed, although this is fundamentally tied to a simplified version of the Hull-Strominger system as explained below.

On a Hermitian manifold a unique connection with skew-symmetric torsion always exists, introduced by Bismut \cite{Bismut} in his work on the local index theorem on complex manifolds (cf. also \cite{StromingerSST}).  For more exotic geometries, foundational work in this direction was done by Friedrich-Ivanov \cite{friedrich2002parallel}, who identified necessary and sufficient conditions for a connection with skew-symmetric torsion to be compatible with almost contact structures, almost Hermitian, and $G_2$ structures, while the $\Spin(7)$ case was treated in \cite{ivanov2004connections}.  In each of these settings, we recover the string background equations by imposing two further conditions, namely that the torsion $H$ is \emph{closed}, and that $\N$ has \emph{reduced holonomy}.  In the setting of complex manifolds the condition that $H$ is closed is just the condition that the metric is pluriclosed, and the condition that $\N$ has holonomy in $SU(n)$ then yields a \emph{Bismut Hermitian-Einstein} (BHE) structure \cite{PCF,JFS}.  Closely related considerations yield natural notions of special geometries on almost contact and almost Hermitian manifolds, which we formalize below.  As part of a recent series of works, Ivanov-Stanchev \cite{ivanov2023riemannianG2} proved the beautiful fact that in the $G_2$ context, simply imposing that the torsion is closed already implies the relevant equations of motion.  These are called strong torsion $G_2$ structures and have been recently studied in \cite{fino2023twisted, fino2025some}.  The analogous statement was shown in the $\Spin(7)$ setting in \cite{ivanov2023riemannianspin7}.  We refer also to \cite{howe2010covariantly} for similar results in the physics literature

In all of these settings the equation of motion takes the same shape.  In particular, each of these geometric settings defines a relevant notion of \emph{Lee form} \cite{lee1943kind}, first playing a role in Hermitian geometry, and later in $G_2$ geometry (\cite{fernandez1982riemannian,bryant1987metrics}).  In all cases the Lee form, which we generically denote by $\theta$ here, measures the effect of a conformal transformation of some spinor defining the relevant geometric structure.  As elucidated in a series of classical and recent works of Ivanov and collaborators \cite{Ivanovstring, ivanov2004connections, ivanov2024riemannian, ivanov2023riemannianG2, ivanov2023riemannianspin7}, it is known that the reduced holonomy geometries described above all satisfy
\begin{align} \label{f:holonomy}
    \Rc^{\N} +\ \N \theta = 0,
\end{align}
This is the equation of a \emph{non-gradient generalized Ricci soliton} (cf. \cite{GRFbook} Ch. 4), in particular thus corresponding to a self-similar solution of generalized Ricci flow.

A subtly different physical argument has also been used to define string backgrounds, via the variational theory of the string effective action (cf. \cite{polchinski2005string}), and associated Perelman $\gl$-functional (cf. \cite{OSW}).  This functional is defined for a pair $(g, H)$ of a Riemannian metric and closed three-form in a fixed deRham class as:
\begin{align*}
    \mathcal F(g, H, f) := \int_M \left( R - \tfrac{1}{12} \brs{H}^2 + \brs{\N f}^2 \right) e^{-f} dV_g, \qquad \gl(g, H) := \inf_{\{f| \int_M e^{-f} dV_g=1\}} \mathcal F(g, H, f).
\end{align*}
Critical points of $\gl$ are \emph{gradient generalized Ricci solitons}, a notion naturally captured by the associated Bismut/Strominger connection \cite{Bismut, StromingerSST}
\begin{align*}
    \N = D + \tfrac{1}{2} g^{-1} H.
\end{align*}
In particular, if $(g, H)$ is a critical point for $\gl$ and $f$ is the unique function achieving the infimum defining $\gl$, then one has
\begin{align*}
    \Rc^{\N} + \N df = 0.
\end{align*}
Due to the monotonicity formula for $\gl$ along generalized Ricci flow, a structure satisfying (\ref{f:holonomy}) is actually a critical point for $\gl$, hence also a gradient soliton.  Combining these two equations thus yields the existence of a canonical \emph{Bismut parallel} vector field
\begin{align*}
    V = g^{-1} \left( \theta - df \right), \qquad \N V = 0.
\end{align*}
The interplay of the two soliton equations and this canonical symmetry has been considered from a physics point of view \cite{papadopoulos2024scale,papadopoulos2025scale}, and observed in recent mathematical works \cite{apostolov2024rigidity,JFS,ivanov2024riemannian,ivanov2023riemannianG2,ivanov2023riemannianspin7}. The main purpose of this paper is to give a general investigation of the structure of string backgrounds by deriving rigidity results in the case $V \equiv 0$, and a dimension reduction principle in the case $V$ is nonvanishing.

We begin with a general discussion of symmetry reduction of generalized Ricci solitons admitting a nontrivial Bismut parallel vector field $V$, without reference to any further special geometric structure.  This dimension reduction principle is implicit in the classic work of Strominger \cite{StromingerSST} although we provide a concrete elementary derivation within for convenience.  Such a $V$ is in particular a Killing field, preserves $H$, and has constant norm, which we will assume without loss of generality is equal to $1$.  We thus obtain a principal connection $\mu = V^{\flat}$ with curvature $F_{\mu} = d \mu$.  As we show in \S \ref{s:DR}, this automatically puts $H$ into the `string ansatz'
\begin{align*}
    H = CS(\mu) + \hat{H},
\end{align*}
where $CS(\mu) = \mu \wedge F_{\mu}$ is the Chern-Simons three form associated to $\mu$, and $\hat{H}$ is some basic form relative to $\mu$.  Moreover, there is a canonically associated horizontal metric $\hat{g}$ and hence a horizontal Bismut connection associated to $(\hat{g}, \hat{H})$.  The three-form $\hat{H}$ will no longer be closed, by rather satisfy the \emph{anomaly cancellation equation/Bianchi identity}
\begin{align*}
    d \hat{H} + F_{\mu} \wedge F_{\mu} = 0.
\end{align*}
If we further impose that $(g, H)$ is a generalized Ricci soliton relative to the vector field $X$, where $V = X - \N f$, then the horizontal data $(\hat{g}, \hat{H})$ satisfies the \emph{gradient string generalized Ricci soliton equations}, i.e.
\begin{align*}
    \Rc^{\hat{\N}} + F_{\mu}^2 + \hat{\N} df =&\ 0,\\
   d^{\star}_{\hat{g}} F - \IP{F, H}_{\hat{g}} + i_{\N f} F =&\ 0.
\end{align*}
Finally, in this setup, there is an elementary but crucial point that for any invariant form preserved by $\N$, the components of its canonical decomposition into basic forms are preserved by $\hat{\N}$.  In particular, this shows that for the various relevant settings $(SU(3), G_2, Spin(7))$, the transverse geometric structures automatically admit a connection with skew-symmetric torsion preserving the structure, a nontrivial condition implying subtle restrictions on the torsion.

The purpose of this paper is to elucidate the fundamental features of the dimension reduction described above, unify and simplify the proofs of several recent results regarding dimension reduction of string backgrounds (cf. \cite{apostolov2024rigidity,fino2025some}), and show that this fundamental pattern naturally extends to new geometric settings such as almost Hermitian, almost contact, and $\Spin(7)$ geometry.  We summarize the main results informally here:
\begin{enumerate}
    \item Establish the canonical symmetry in the almost Hermitian setting (Theorem \ref{t:ahstructure}) and show the transverse almost Hermitian structure is a conformally balanced string generalized Ricci soliton (Theorem \ref{t:ahreduction}).
    \item Establish the canonical symmetry in the almost contact setting (Theorem \ref{t:reducedholonomyAC}), and show the transverse almost contact  structure is a conformally balanced string generalized Ricci soliton (Theorem \ref{t:acreduction}).
    \item Show that when the canonical symmetry for $SU(3)$ structures (cf. \cite{ivanov2024riemannian}) is nontrivial, the complex structure is integrable, hence the structure is Bismut-Hermitian-Einstein (Theorem \ref{t:SU(3)rigidity}) and thus described by a transverse K\"ahler structure (cf. \cite{apostolov2024rigidity}).
    \item Show that when the canonical symmetry for $G_2$ structures (cf. \cite{ivanov2023riemannianG2}) is nontrivial, the transverse $SU(3)$ structure is a conformally balanced string soliton, compute its torsion, and show it is furthermore conformally half-flat (Theorem \ref{t:G2reduction}).  When the symmetry is vanishing we generalize a rigidity observation of \cite{ivanov2023riemannianG2} (Proposition \ref{p:G2rigidity}), and we give a characterization of when the symmetry defines a Riemannian splitting (Corollary \ref{c:G2splitting}).
    \item Show that when the canonical symmetry for $\Spin(7)$ structures (cf. \cite{ivanov2023riemannianspin7}) is nontrivial, the transverse $G_2$ structure is an integrable, constant type, conformally balanced string soliton (Theorem \ref{t:Spin(7)reduction}).  Give a characterization of when the symmetry defines a Riemannian splitting (Corollary \ref{c:spin(7)splitting}).  Interpret the reduced structure as a $G_2$-instanton on an extended tangent bundle (Remark \ref{r:spin7asG2instanton}).
\end{enumerate}

We illustrate the dimension reduction phenomenon with some explicit examples.  Up to this point, the only \emph{compact} examples of Bismut connections with reduced holonomy arise from flat connections, i.e. have trivial holonomy.  However, the dimension reduction yields nontrivial examples of string solitons on homogeneous spaces.  In Example \ref{ex:nonintG2} we exhibit a $G_2$-structure on $SU(2) \times SU(2) \times U(1)$ which yields a reduced $SU(3)$ structure on $SU(2) \times SU(2)$ which is a co-coupled, half flat strong torsion $SU(3)$ structure with reduced holonomy.  Furthermore in Example \ref{ex:nonintG2nonclosedLee} we define a $G_2$ structure again on $SU(2) \times SU(2) \times U(1)$ which yields a co-coupled half-flat SU(3) string soliton on $\mathbb {CP}^1 \times S^3 \times S^1$.  By reducing a Spin(7) structure on $U(1) \times SU(2) \times SU(2) \times U(1)$, we exhibit a $G_2$ string soliton on $\mathbb{CP}^1 \times S^3 \times T^2$ in Example \ref{ex:nonintSpin7OneA}.  Furthermore, in Example \ref{ex:nonintSpin7Two} we exhibit a strong torsion Spin(7) structure on $SU(3)$ whose canonical symmetry generates an $\mathbb R$-action and defines a $G_2$ string soliton on the transverse distribution.

\begin{rmk} We take pains to adopt a consistent naming scheme for the various geometric settings below (almost contact, almost Hermitian, $SU(3)$, $G_2$, $\Spin(7)$).  We add the adjective `skew-torsion' if there is a structure-preserving connection with skew-symmetric torsion.  We will add the adjective `strong torsion' if this torsion three-form is in addition closed.  This results in mostly small modifications of existing terminologies, a notable exception being that the Gray-Hervella class G1 \cite{gray1980sixteen} is here a `skew-torsion almost Hermitian structure.'
\end{rmk}

\section{Dimension reduction, Bismut connections, and generalized Ricci solitons} \label{s:DR}

In this section we give a concrete derivation of fundamental key structural results for generalized Ricci solitons admitting Bismut-parallel vector fields.  Though in a sense these results are not new, we give proofs for concreteness, and refer to \cite{garcia2024pluriclosed} for further background.  A key feature is the natural appearance of a Chern-Simons three form, which has been observed previously in physics literature (cf. \cite{gran2006spinorial,gran2019classification}).

\begin{rmk} In what follows below we will be investigating various geometric structures on a manifold $M$ equipped with a Killing vector field $V$. Unless otherwise specified we do not assume any structure of the quotient space $M / \IP{V}$.  Hence many objects are implicitly defined below only on the transverse distribution $V^{\perp}$, and we will overload various definitions without imposing the further adjective `transverse.'
\end{rmk}

\begin{lemma} \label{l:reducedBC} Suppose $(M^n, g, H)$ is a manifold with closed three form endowed a vector field $V$ such that $\N V = 0$, $\brs{V} = 1$, where $\N = D + \tfrac{1}{2} H$ is the Bismut connection associated to $(g, H)$.  Then, defining $\mu = V^{\flat}$, one has
\begin{enumerate}
    \item $L_V g = L_V H = 0$.
    \item $g = \mu \otimes \mu + \hat{g}$, where $\hat{g}$ is a Riemannian metric on $V^{\perp}$.
    \item $H = CS(\mu) + \hat{H}$, where $\hat{H}$ is basic.
    \item The connection $\hat{\N}$ induced by $\N$ on the horizontal distribution is identified with the Bismut connection associated to $(\hat{g}, \hat{H})$.
    \item Given an $V$-invariant, $\N$-parallel differential form $\varphi$, there exist basic forms $\ga, \gb$ such that $\varphi = \mu \wedge  \ga + \gb$, and $\hat{\N} \ga = 0, \hat{\N} \gb = 0$.
\end{enumerate}
\begin{proof} Note that the condition $\N V = 0$ is equivalent to
\begin{align*}
    L_V g = 0, \qquad d \mu = i_{V} H.
\end{align*}
Taking the exterior derivative of the second equation and using the Cartan formula and $d H = 0$ gives $L_V H = 0$, yielding item (1).

As $V$ is a nonzero Killing field it locally defines the structure of an $\mathbb R$-bundle, with respect to which $g$ admits the usual description $g = a \mu \otimes \mu + \hat{g}$ for a smooth function $a$.  However as $\brs{V} = 1$ it follows that $a = 1$, giving item (2).

To establish item (3), first observe using the invariance of $H$ that it admits a unique expression $H = \mu \wedge  \hat{\ga} +  \hat{H}$ for basic forms $ \hat{\ga},  \hat{H}$.  However, the condition $d \mu = i_V H$ precisely says that $\hat{\ga} = d \mu$, yielding item (3).

To establish item (4), we recall that the connection induced on the horizontal distribution can be expressed
\begin{align*}
    \hat{\N}_X Y = \pi_* \N_{\hat{X}} \hat{Y},
\end{align*}
where $X, Y$ are horizontal vectors and $\hat{X}, \hat{Y}$ denotes the horizontal lift.  It follows easily from item (2) that this connection preserves $\hat{g}$, and from item (3) that the torsion is $\hat{H}$, thus by uniqueness the claim follows.

To establish item (5), we first observe we can define $\ga$ and $\gb$ uniquely via $ \ga = i_V \varphi$, $\gb = \varphi - \mu \wedge  \ga$.  To show the parallelism we establish the case when $\varphi$ is a two-form, with the general case following analogously.  Using that $V$ is parallel we have
\begin{align*}
    (\hat{\N}_X \ga)(Y) =&\ X \ga(Y) - \ga(\hat{\N}_X Y)\\
    =&\ \hat{X}  \ga(\hat{Y}) -  \ga \left( \N_{\hat{X}} {\hat{Y}} \right)\\
    =&\ \hat{X} \varphi(V, \hat{Y}) - \varphi \left( V, \N_{\hat{X}} {\hat{Y}}  \right)\\
    =&\ \hat{X} \varphi(V, \hat{Y}) - \varphi(\N_{\hat{X}} V, \hat{Y}) - \varphi(V, \N_{\hat{X}} \hat{Y})\\
    =&\ (\N_{\hat{X}} \varphi)(V,\hat{X},\hat{Y})\\
    =&\ 0.
\end{align*}
A similar computation shows that $\gb$ is $\hat{\N}$-parallel as well.
\end{proof}
\end{lemma}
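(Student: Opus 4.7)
The plan is to translate the hypothesis $\N V = 0$ into concrete algebraic identities using $\N = D + \tfrac{1}{2} g^{-1} H$, and then deduce each structural claim in turn. Writing out $\N_X V = D_X V + \tfrac{1}{2}(i_V H)(X, \cdot)^{\sharp}$, the parallel condition becomes $D_X V = -\tfrac{1}{2} (i_V H)(X, \cdot)^{\sharp}$. Skew-symmetry of $H$ then forces $D V$ to be skew as an endomorphism, so $V$ is Killing and $L_V g = 0$; pairing this with the definition of $d\mu$ gives $d\mu = i_V H$. Taking the exterior derivative of this equation and applying Cartan's formula together with $dH = 0$ yields $L_V H = d(i_V H) = 0$, which establishes (1).

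For (2) I would invoke the standard fact that a nowhere-vanishing Killing field locally generates an $\mathbb R$-action, giving $M$ the local structure of an $\mathbb R$-bundle for which $\mu$ is a principal connection. Invariance of $g$ yields the expression $g = \brs{V}^2 \mu \otimes \mu + \hat g$ for an invariant horizontal metric $\hat g$, and the normalization $\brs{V}=1$ eliminates the scalar factor. For (3), $V$-invariance of $H$ gives the unique basic decomposition $H = \mu \wedge \hat{\alpha} + \hat{H}$; contracting with $V$ and comparing with $i_V H = d\mu$ from (1) identifies $\hat{\alpha} = d\mu = F_\mu$, which is exactly $CS(\mu)$.

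For (4) I would take $\hat{\N}_X Y := \pi_* \N_{\hat X} \hat Y$ and check metric compatibility directly from (2). The torsion is computed by projecting the torsion of $\N$ on horizontal lifts; since the Chern-Simons summand $\mu \wedge F_\mu$ pairs two horizontal vectors only through $\mu$ and therefore vanishes upon horizontal contraction, only $\hat H$ survives, so $\hat\N$ has skew torsion $\hat H$. Uniqueness of the Bismut connection for $(\hat g, \hat H)$ closes the identification.

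The most delicate step is (5). The decomposition is forced by setting $\alpha := i_V \varphi$ and $\beta := \varphi - \mu \wedge \alpha$, so both are basic by construction. To prove $\hat{\N}\alpha = 0$, I would compute $(\hat\N_X \alpha)(Y_1,\dots,Y_{k-1})$ on horizontal lifts, expand using the defining property of $\hat\N$ from (4), and rewrite the result as $(\N_{\hat X}\varphi)(V,\hat Y_1,\dots,\hat Y_{k-1})$ by commuting $i_V$ past $\N$ using $\N V = 0$; this vanishes since $\N\varphi = 0$. The identity $\hat\N\beta = 0$ follows analogously by testing $\N\varphi = 0$ on purely horizontal tuples and using (4) to replace $\pi_*\N_{\hat X}\hat Y$ by $\hat\N_X Y$. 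The main obstacle I anticipate is the bookkeeping when contracting $i_V$ against general $k$-forms and keeping track of which terms are basic under $L_V$, but the $2$-form computation serves as a clean template whose pattern carries verbatim to higher degree.
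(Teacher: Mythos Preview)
Your proposal is correct and follows essentially the same approach as the paper's own proof: the same equivalence $\N V = 0 \Leftrightarrow (L_V g = 0,\ d\mu = i_V H)$, the same basic decomposition of $H$ with $\hat\alpha = d\mu$, the same definition $\hat\N_X Y = \pi_* \N_{\hat X}\hat Y$, and the same $\alpha = i_V\varphi$, $\beta = \varphi - \mu\wedge\alpha$ argument for (5), reduced to the $2$-form template. Your added remarks (explicitly noting why $DV$ is skew, and why $\mu\wedge F_\mu$ vanishes on horizontal triples) are helpful clarifications the paper leaves implicit; the only imprecision is the phrase ``$\hat\alpha = d\mu = F_\mu$, which is exactly $CS(\mu)$'' --- it is $\mu\wedge\hat\alpha$ that equals $CS(\mu)$, not $\hat\alpha$ itself.
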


\begin{rmk} \label{r:reversibility} The construction of Lemma \ref{l:reducedBC} is reversible in the following sense: fix the data $(\hat{g}, F, \hat{H})$ of a Riemannian metric, closed two-form, and a three form on a manifold such that 
\begin{align*}
    d \hat{H} + F \wedge F = 0.
\end{align*}
Further assuming $[F] \in H^2(\hat{M}, \mathbb Z)$, by Chern-Weil theory we obtain a principal $U(1)$-bundle $\pi: M \to \hat{M}$ together with a principal connection $\mu$ such that $d \mu = F$.  Then, defining $(g, H)$ on $M$ as in items (2) and (3) of Lemma \ref{l:reducedBC}, it follows that $d H = 0$, and moreover the canonical vector field $V$ associated to the action is $\N$-parallel on $M$.  Furthermore given $\ga$ a $\hat{\N}$-parallel form on $\hat{M}$, both $\pi^* \ga$ and $\mu \wedge \pi^* \ga$ are $\N$-parallel.
\end{rmk}

\begin{defn} \label{d:GRS} The data $(g, H, X)$ of a Riemannian metric, closed three-form, and vector field $X$ define a \emph{generalized Ricci soliton} (GRS) if
\begin{align*}
    \Rc^{\N} + \N X^{\flat} = 0.
\end{align*}
We say that the soliton is \emph{gradient} if there exists a function $f$ such that $X = \N f$. 
\end{defn}

\begin{defn} \label{d:stringGRS} The data $(g, H, F,X)$ of a Riemannian metric, three-form, closed two-form and vector field $X$ define a \emph{string generalized Ricci soliton} (string GRS) if
\begin{align*}
    \Rc^{\N} +  F^2 + \N X^{\flat} =&\ 0,\\
   d^{\star}F - \IP{F, H} + i_X F =&\ 0,\\
    d H + F \wedge F =&\ 0.
\end{align*}
We say that the soliton is \emph{gradient} if there exists a function $f$ such that $X = \N f$. 
\end{defn}

\begin{prop} \label{p:reducedGRS} Suppose $(M, g, H, X)$ is a compact GRS.  Then there exists $f$ such that $(g, H, f)$ is a gradient GRS, and the vector field $V = X - \N f$ satisfies $\N V = 0$.  Then either $V = 0$ or without loss of generality we may assume $\brs{V} = 1$.  In this case $(\hat{g}, F_{\mu}, \hat{H},f)$ defines a gradient string GRS on $V^{\perp}$.
\begin{proof} It follows from the gradient formulation of generalized Ricci flow (\cite{OSW}, cf. \cite{GRFbook} Ch. 6) that a compact GRS is automatically a gradient GRS.  The two soliton equations $\Rc^{\N} + \N X^{\flat} = 0 = \Rc^{\N} + \N d f$ immediately imply that $V$ is $\N$-parallel, and in particular has constant norm since $\N$ preserves $g$.  If $V$ is nonvanishing, by scaling we can assume $\brs{V} = 1$, and Lemma \ref{l:reducedBC} applies.  We observe by item (3) that the equation $d H = 0$ is then equivalent to $d \hat{H} + F_{\mu} \wedge F_{\mu} = 0$.  Moreover the setup of \cite{garcia2024pluriclosed} \S 5 applies, and in particular Proposition 5.4 yields the remaining string generalized Ricci soliton equations.
\end{proof}
\end{prop}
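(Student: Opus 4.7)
The plan has three stages matching the three assertions. First, to produce the potential, I would invoke the variational theory of Perelman's $\gl$-functional adapted to generalized geometry as developed in Oliynyk-Suneeta-Woolgar and summarized in Chapter 6 of the generalized Ricci flow monograph. The essential input is that generalized Ricci flow is the gradient flow of $\gl$ on the space of pairs $(g, H)$ modulo diffeomorphism. A compact GRS yields a self-similar solution of generalized Ricci flow, so $\gl$ is constant in time; strict monotonicity on non-stationary flows then forces the solution to be a gradient steady soliton, and the unique minimizer $f$ in the definition of $\gl$ supplies a function with $\Rc^{\N} + \N df = 0$.

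Given both $\Rc^{\N} + \N X^{\flat} = 0$ and $\Rc^{\N} + \N df = 0$, subtraction yields $\N(X^{\flat} - df) = 0$, i.e., $\N V = 0$. Compatibility of $\N$ with $g$ forces $\brs{V}^2$ to be parallel, hence constant, so either $V \equiv 0$ or, after a harmless rescaling absorbed into $X$, we may assume $\brs{V} = 1$.

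In the nonvanishing case, Lemma \ref{l:reducedBC} provides the decompositions $g = \mu \otimes \mu + \hat{g}$ and $H = CS(\mu) + \hat{H}$ with $\hat{H}$ basic, and identifies $\hat{\N}$ with the Bismut connection of $(\hat{g}, \hat{H})$. Since $F_{\mu} = d\mu$ is already closed, $dH = 0$ unpacks to $d\hat{H} + F_{\mu} \wedge F_{\mu} = 0$, giving the Bianchi/anomaly equation required by Definition \ref{d:stringGRS}.

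The main obstacle, and the only nontrivial computation, is extracting the remaining two string GRS equations from $\Rc^{\N} + \N df = 0$ on the total space. The function $f$ is automatically $V$-invariant: since $(g, H)$ are $V$-invariant the spectral problem defining $f$ commutes with $L_V$, and by uniqueness of the minimizer $Vf = 0$. One then projects $\Rc^{\N} + \N df = 0$ onto the pieces of the splitting $TM = \la V \ra \oplus V^{\perp}$. An O'Neill-type computation for the Bismut connection identifies the purely horizontal part of $\Rc^{\N}$ with $\Rc^{\hat{\N}} + F_{\mu}^2$, where the correction arises from the curvature of the principal connection $\mu$, producing the first equation. The mixed horizontal-vertical component of the same equation unpacks to $d^{\star}_{\hat{g}} F_{\mu} - \IP{F_{\mu}, \hat{H}}_{\hat{g}} + i_{\N f} F_{\mu} = 0$. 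Rather than grind through this projection by hand, I would invoke Proposition 5.4 of \cite{garcia2024pluriclosed}, where precisely this reduction is carried out in the generalized-geometric framework.
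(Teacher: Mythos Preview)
Your proposal is correct and follows essentially the same route as the paper: invoke the $\gl$-functional monotonicity (OSW, \cite{GRFbook} Ch.~6) to produce $f$, subtract to get $\N V=0$, apply Lemma \ref{l:reducedBC} for the Bianchi identity, and cite \cite{garcia2024pluriclosed} Proposition 5.4 for the remaining string soliton equations. You add a bit more detail on the O'Neill-type mechanism and preemptively argue $Vf=0$ (which the paper establishes separately in the subsequent proposition via a maximum-principle argument rather than the symmetry-of-the-spectral-problem reasoning you give), but the overall strategy and key inputs are the same.
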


We end this section with a further geometric observation, namely that $V$ preserves $f$ and moreover the weighted generalized scalar curvature associated to $X$ is constant.  While we typically scale so that $\brs{V} \equiv 1$ we state the result below in a general scale-invariant fashion.

\begin{prop} Given the setup of Proposition \ref{p:reducedGRS}, one has
\begin{align*}
    V f = 0.
\end{align*}
Furthermore
\begin{align*}
    \gl + \brs{V}^2 =&\ R - \tfrac{1}{12} \brs{H}^2 + 2 \divg X - \brs{X}^2.
\end{align*}
\begin{proof} It follows from (\cite{GRFbook} Proposition 4.33) that in general a GRS satisfies that $R - \tfrac{1}{12} \brs{H}^2 + 2 \gD f - \brs{\N f}^2$ is constant, and this constant must be $\gl$ by integration.  We further observe using Proposition \ref{p:reducedGRS} that
\begin{align*}
    0 = \tr_g L_{X - \N f} g = 2 \divg X - 2 \gD f.
\end{align*}
To show that $V$ preserves $f$ we first note using Lemma \ref{l:reducedBC} that $V$ preserves $g$ and $H$, hence differentiating the constant weighted scalar curvature equation yields
\begin{align*}
    0 = \gD L_V f - \IP{\N L_V f, \N f}.
\end{align*}
Since $M$ is compact, by the maximum principle we conclude $L_V f$ is constant, and this constant can only be zero.  Observe that this fact is equivalent to
\begin{align*}
    0 = V f = (X - \N f) f = \IP{X, \N f} - \brs{\N f}^2.
\end{align*}
Using this it follows that
\begin{align*}
    \brs{V}^2 = \brs{X - \N f}^2 = \brs{X}^2 - 2 \IP{X, \N f} + \brs{\N f}^2 = \brs{X}^2 - \brs{\N f}^2.
\end{align*}
Using the above equations together with the constant scalar curvature equation yields the result.
\end{proof}
\end{prop}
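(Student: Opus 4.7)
The plan is to derive both claims by applying $L_V$ to the constant weighted scalar curvature identity and then unpacking the relationship between $X$, $\N f$, and $V$. I begin by recalling that any compact gradient GRS satisfies the pointwise constancy of the weighted scalar curvature,
\[
R - \tfrac{1}{12}\brs{H}^2 + 2\gD f - \brs{\N f}^2 = \gl
\]
(cf.\ \cite{GRFbook} Proposition 4.33), and that tracing the Killing equation $L_V g = L_{X - \N f} g = 0$ immediately yields the algebraic relation $\divg X = \gD f$.

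For the first assertion $Vf = 0$, I would differentiate the displayed identity along $V$. By Lemma \ref{l:reducedBC} the vector field $V$ preserves both $g$ and $H$, so $L_V R = 0$ and $L_V \brs{H}^2 = 0$; and since $V$ is Killing it commutes with the scalar Laplacian, giving $L_V \gD f = \gD(Vf)$ and $L_V \brs{\N f}^2 = 2 \IP{\N(Vf), \N f}$. Collecting the surviving terms yields the drift-Laplace equation
\[
\gD(Vf) - \IP{\N(Vf), \N f} = 0
\]
on the compact manifold $M$. The strong maximum principle then forces $Vf$ to be a constant $c$. To pin down $c = 0$, I would integrate against the weighted measure $e^{-f} dV$: since $V$ is Killing its divergence vanishes, so integration by parts gives
\[
c \int_M e^{-f} dV = \int_M (Vf) e^{-f} dV = -\int_M V(e^{-f}) dV = \int_M e^{-f} \divg V \, dV = 0,
\]
whence $c = 0$.

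For the scalar identity, having established $Vf = 0$, the definition $V = X - \N f$ immediately yields $\IP{X, \N f} = \brs{\N f}^2$, and therefore
\[
\brs{V}^2 = \brs{X}^2 - 2 \IP{X, \N f} + \brs{\N f}^2 = \brs{X}^2 - \brs{\N f}^2.
\]
Substituting this relation together with $\divg X = \gD f$ into the constant weighted scalar curvature identity and rearranging produces the claimed scale-invariant formula.

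The main obstacle I anticipate is ensuring that the constant $c$ produced by the maximum principle actually vanishes; the remaining steps are essentially algebraic consequences of the two soliton equations, Lemma \ref{l:reducedBC}, and \cite{GRFbook} Proposition 4.33. The integration-by-parts argument for $c = 0$ is where the compactness of $M$ and the Killing property of $V$ both play an essential role; without either, there is no mechanism to rule out a nonzero drift of $f$ along $V$.
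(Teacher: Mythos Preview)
Your proposal is correct and follows essentially the same approach as the paper: differentiate the constant weighted scalar curvature identity along $V$, apply the maximum principle to conclude $Vf$ is constant, and then combine $\divg X = \gD f$ with the algebraic identity $\brs{V}^2 = \brs{X}^2 - \brs{\N f}^2$ to obtain the scalar formula. The only difference is cosmetic: the paper dispatches the constant $c$ with the implicit observation that $Vf = \IP{V,\N f}$ vanishes at any critical point of $f$, whereas you give an explicit integration-by-parts argument using $\divg V = 0$; both are valid and equally short.
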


\begin{prop} \label{p:fHrigidity} Given $(M^n, g, H, f)$ a compact GRS, then $f$ is constant if and only if $\brs{H}^2$ is constant.
\begin{proof} It follows from Bianchi identities that GRS automatically satisfy the dilatino equation
\begin{align*}
    \gD f - \brs{\N f}^2 + \tfrac{1}{6} \brs{H}^2 \equiv \gl.
\end{align*}
Since $M$ is compact the result follows easily from the maximum principle.
\end{proof}
\end{prop}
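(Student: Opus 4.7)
The plan is to deploy the dilatino equation, which any compact gradient generalized Ricci soliton with closed torsion satisfies:
\begin{align*}
\gD f - \brs{\N f}^2 + \tfrac{1}{6} \brs{H}^2 \equiv \gl.
\end{align*}
This identity follows from Bianchi identities: one traces the soliton equation $\Rc^{\N} + \N df = 0$ and combines the resulting scalar identity with the constant weighted scalar curvature equation $R - \tfrac{1}{12}\brs{H}^2 + 2\gD f - \brs{\N f}^2 \equiv \gl$ recorded in GRFbook Proposition 4.33. This eliminates $R$ and yields the dilatino equation; crucially the cancellation uses $dH = 0$ to control the antisymmetric contribution from $\Rc^{\N}$.

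Once this identity is in hand, both implications are short. If $f$ is constant then $\N f \equiv 0$ and $\gD f \equiv 0$, so the dilatino equation collapses to $\tfrac{1}{6}\brs{H}^2 \equiv \gl$ and $\brs{H}^2$ is constant. For the converse, assume $\brs{H}^2$ is constant, so the dilatino equation reads $\gD f - \brs{\N f}^2 = c$ for the constant $c := \gl - \tfrac{1}{6}\brs{H}^2$. At a maximum of $f$ one has $\gD f \le 0$ and $\N f = 0$, forcing $c \le 0$, and at a minimum one similarly obtains $c \ge 0$, hence $c = 0$. Thus $\gD f = \brs{\N f}^2$ pointwise, and integrating over the compact manifold makes the left side vanish by the divergence theorem, so $\N f \equiv 0$ and $f$ is constant.

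I expect no substantive obstacle: the argument is a routine maximum principle application once the dilatino identity is recorded. The only point worth emphasizing is that this identity genuinely requires both the gradient soliton equation and the closedness of $H$, so the rigidity phenomenon is an artifact of the full GRS system rather than of either equation individually.
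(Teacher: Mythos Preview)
Your proposal is correct and follows essentially the same approach as the paper: invoke the dilatino equation $\gD f - \brs{\N f}^2 + \tfrac{1}{6}\brs{H}^2 \equiv \gl$ (which the paper also attributes to Bianchi identities) and then conclude via the maximum principle on the compact manifold. You have simply written out the details of the maximum principle step that the paper leaves implicit.
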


\section{Strong torsion almost-Hermitian structures} \label{s:AH}
\subsection{Definitions and structure theorems}

\begin{defn} \label{d:almhermstrongtorsion} Fix $(M^{2n}, g, J)$ an almost Hermitian manifold.  We say that it defines a \emph{skew-torsion almost Hermitian structure} if there exists a connection $\N$ with skew-symmetric torsion $H_{\gw}$ satisfying $\N g = 0, \N J = 0$.  From (\cite{friedrich2002parallel} Theorem 10.1) such a connection exists if and only if the Nijenhuis tensor is skew-symmetric, in which case the connection is unique and moreover \footnote{Below is the first instance of a general notation where we will decorate $H$ with some piece of the structure to which it is associated, which will be clarifying in the dimension reduction results to come.  While in this setting $H_{\gw}$ of course also depends on $J$, we do not include it to avoid clutter.}
\begin{align} \label{eq:AHHform}
    H_{\gw} = d^c {\gw} + N.
\end{align}
We say that it is a \emph{strong torsion almost Hermitian structure} if $d H_{\gw} = 0$.  We say that the structure has \emph{reduced holonomy} if the holonomy of $\N$ lies in $SU(n)$.
\end{defn}

\begin{rmk} The class of almost Hermitian skew-torsion structures is the same as the Grey-Hervella class $G_1$ \cite{gray1980sixteen}.  We adopt the terminology above to emphasize the analogies with other geometric settings to come.
\end{rmk}

\begin{defn} \label{d:ahcurvatures}
Given $(M^{2n}, g, J)$ a skew-torsion almost Hermitian structure, define the \emph{Lee form} by
\begin{align*}
    \theta_{\gw} = - \tfrac{1}{2} \sum_{i=1}^{2n} H_{\gw}(J \cdot, e_i, J e_i).
\end{align*}
Furthermore we define the \emph{Bismut Ricci form}
\begin{align*}
    \rho(X,Y) = \tfrac{1}{2} \sum_{i=1}^{2n} R^{\N}(X,Y,e_i, J e_i).
\end{align*}
\end{defn}

\begin{rmk}
    The structure has reduced holonomy if and only if $\rho \equiv 0$.  In case $J$ is integrable this is the notion of a Bismut-Hermitian Einstein metric studied in (cf. \cite{apostolov2024rigidity,barbaro2023bismut,barbaro2024pluriclosed,JFS,ye2025bismut}).  Aspects of the dimension reduction discussed below were shown in \cite{apostolov2024rigidity}.
\end{rmk}

\begin{thm} \label{t:ahstructure} (cf. \cite{ivanov2004connections} Theorem 10.5) Let $(M, g, J)$ be a compact strong torsion almost Hermitian manifold with reduced holonomy.  Then
\begin{enumerate}
    \item $\Rc^{\N} +\ \N \theta_{\gw} = 0$,
    \item There exists a unique smooth function $f$ such that $\int_M e^{-f} dV_g = 1$ and $\Rc^{\N} + \N df = 0$,
    \item The vector field $V = \theta_{\gw}^{\sharp} - \N f$ satisfies $\N V = L_V g = 0$, and $\N JV = L_{JV} g = 0$.
\end{enumerate}
\begin{proof}
The proof hinges on a key curvature identity for strong torsion almost Hermitian structures, proved in the Hermitian case in \cite{Ivanovstring} (3.16), and generalized to the almost Hermitian case in (\cite{friedrich2002parallel} Theorem 10.5).  In particular one has
\begin{align*}
    \rho(X, Y) = \left(\Rc^{\N} + \N \theta_{\gw} \right) (X, JY).
\end{align*}
Thus the vanishing of $\rho$ immediately implies (1).  Given (1), item (2) follows from Proposition \ref{p:reducedGRS} above.  For item (3), first note that Proposition \ref{p:reducedGRS} also implies $L_V g = 0$, and moreover since $J$ is parallel $\N JV = 0$, hence also $L_{JV} g = 0$.
\end{proof}
\end{thm}

\begin{rmk} \label{r:BHErigid} We note that in \cite{JFS} Proposition 2.6 it was shown that in the case $J$ is integrable, we have $V = 0$ if and only if the metric is K\"ahler Calabi-Yau.  Moreover a direct proof was given in \cite{apostolov2024rigidity}, relying on a key torsion identity (cf. \cite{Ivanovstring}).  This identity has an extension to the nonintegrable case (\cite{ivanov2024riemannian} 2.21), although in this setting a term involving the Nijenhuis tensor nullifies this argument.  Indeed the rigidity is false: in Example \ref{ex:nonintsu(3)} we observe a strong torsion $SU(3)$ structure with nonintegrable $J$, and $V = 0$.
\end{rmk}

\subsection{Dimension Reduction}

\begin{thm} \label{t:ahreduction} Let $(M^{2n}, g, J)$ be a compact strong torsion almost Hermitian structure with reduced holonomy.  Assume $V$ as in Theorem \ref{t:ahstructure} is nonzero and further assume that $L_V J = L_{JV} J = 0$.  Let $(\hat{g}, \hat{J})$ denote the canonically induced almost Hermitian structure on $\{V, JV\}^{\perp}$.  Then
\begin{enumerate}
    \item $(\hat{g}, \hat{J})$ is a skew-torsion almost Hermitian structure,
    \item $H_{\gw} = CS(\mu) + CS(J \mu) + H_{\hat{\gw}}$, where $H_{\hat{\gw}} = d^c \hat{\gw} + N_{\hat{\gw}}$,
    \item $(\hat{g}, F, H_{\hat{\gw}},f)$ is a string generalized Ricci soliton, where $F = F_{\mu} \otimes V + F_{J\mu} \otimes JV$,
    \item $F_{\mu} = d \theta_{\gw} \in \Lambda^{1,1}_0,\ F_{J \mu} \in \Lambda^{1,1},\ \tr_{\hat{\gw}} F_{J \mu} = -2$,
    \item $\theta_{\hat{\gw}} = df$.
\end{enumerate}
\begin{proof} We first observe that the assumption $L_V J = L_{JV} J = 0$ implies furthermore that $[V, JV] = 0$, hence we have all structure preserved by the rank $2$ abelian distribution $\{V, JV \}$.
It follows that we obtain a canonically induced transverse almost Hermitian structure $(\hat{g}, \hat{J})$ via
\begin{gather} \label{f:inducedAH}
\begin{split}
    \hat{g} =&\ g - \mu \otimes \mu - J \mu \otimes J \mu,\\
    \hat{J} =&\ J - JV \otimes \mu - V \otimes \mu J,\\
    \hat{\gw} =&\ \gw - \mu \wedge J \mu.
\end{split}
\end{gather}
It follows by an obvious extension of Lemma \ref{l:reducedBC}(5) that $\hat{\N} \hat{\gw} = 0$, hence $\hat{\N} \hat{J} = 0$, hence item (1) follows.  Furthermore item (2) follows from Lemma \ref{l:reducedBC}(3) together with the uniqueness of the skew-torsion connection.  Item (3) follows from Proposition \ref{p:reducedGRS}.  For item (4) we first note that the fact that $V$ and $JV$ preserve $J$ implies that $F_{\mu}$ and $F_{J \mu}$ are type $(1,1)$.  The claims on their traces as well as item (5) follows from the proof of \cite{apostolov2024rigidity} Proposition 2.10, as it does not rely on integrability of $J$.
\end{proof}
\end{thm}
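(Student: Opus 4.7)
The plan is to iterate the single-vector-field dimension reduction of Lemma \ref{l:reducedBC} using both parallel Killing fields $V$ and $JV$ provided by Theorem \ref{t:ahstructure}. After normalizing $\brs{V} = 1$, both $V$ and $JV$ are unit-length Bismut-parallel Killing fields that preserve $g$, $H_{\gw}$, $\gw$, and $J$. I first apply Lemma \ref{l:reducedBC} with $V$: the vector field $JV$ is horizontal (since $g(V,JV) = 0$), is preserved by $V$ (indeed $L_V J = 0$ and Cartan's formula give $[V,JV] = J[V,V] = 0$), and therefore descends to a unit-length Bismut-parallel Killing field with respect to the reduced Bismut connection identified in Lemma \ref{l:reducedBC}(4). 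A second application with $JV$ then produces horizontal data $(\hat g, H_{\hat{\gw}})$ on the codimension-two distribution $\{V,JV\}^\perp$, matching the explicit formulas in (\ref{f:inducedAH}).

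Items (1) and (2) fall directly out of this iteration. Applying Lemma \ref{l:reducedBC}(5) successively to the $\N$-parallel two-form $\gw$ shows the basic component $\hat{\gw} = \gw - \mu\wedge J\mu$ is $\hat{\N}$-parallel, hence $\hat J$ is $\hat{\N}$-parallel. Since $\hat{\N}$ is metric with totally skew torsion by Lemma \ref{l:reducedBC}(4), the Friedrich-Ivanov criterion (Definition \ref{d:almhermstrongtorsion}) yields item (1), and uniqueness combined with (\ref{eq:AHHform}) forces $H_{\hat{\gw}} = d^c \hat{\gw} + N_{\hat{\gw}}$. Iterating Lemma \ref{l:reducedBC}(3) then gives the decomposition $H_{\gw} = CS(\mu) + CS(J\mu) + H_{\hat{\gw}}$, proving item (2). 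For item (3), iterate Proposition \ref{p:reducedGRS}: each circle reduction produces a string generalized Ricci soliton structure, and composing them yields the string GRS equations for $(\hat g, F, H_{\hat{\gw}}, f)$ with $F = F_{\mu}\otimes V + F_{J\mu}\otimes JV$ built from the curvatures $F_{\mu} = d\mu$ and $F_{J\mu} = d(J\mu)$.

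For item (4), the $(1,1)$-type of both $F_{\mu}$ and $F_{J\mu}$ with respect to $\hat J$ is an immediate consequence of $L_V J = L_{JV} J = 0$ from Theorem \ref{t:ahstructure}(3), since these imply $d\mu(JX,JY) = d\mu(X,Y)$ on horizontal inputs, and similarly for $d(J\mu)$. The sharper identities $F_{\mu} = d\theta_{\gw}$, $\tr_{\hat{\gw}} F_{\mu} = 0$, $\tr_{\hat{\gw}} F_{J\mu} = -2$, and item (5)'s identification $\theta_{\hat{\gw}} = df$ are all imported from the curvature computation of \cite{apostolov2024rigidity} Proposition 2.10. That argument uses only $\N$-parallelism of $V, JV, \gw$ together with the identities $d\mu = i_V H_{\gw}$ and $d(J\mu) = i_{JV} H_{\gw}$ from Lemma \ref{l:reducedBC}, none of which require integrability of $J$.

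The main obstacle is precisely this last step: verifying that the integrable-case arguments of \cite{apostolov2024rigidity} for the trace and Lee-form identities extend verbatim to the almost Hermitian setting, despite the extra Nijenhuis contribution $N$ in $H_{\gw} = d^c\gw + N$. A careful inspection should confirm that all terms involving $N$ lie in types that vanish under the relevant traces and Lee-form contractions, consistent with the observation made in the proof of Theorem \ref{t:ahstructure} that $i_V N = 0$ on horizontal inputs. This bookkeeping is where essentially all of the almost-Hermitian-specific content of the theorem lives.
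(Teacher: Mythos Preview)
Your proposal is correct and follows essentially the same approach as the paper: both arguments derive items (1)--(3) by applying Lemma~\ref{l:reducedBC} and Proposition~\ref{p:reducedGRS} to the rank-two symmetry $\{V,JV\}$ (the paper phrases this as an ``obvious extension'' rather than an explicit iteration), obtain the $(1,1)$-type of $F_\mu,F_{J\mu}$ from $L_V J = L_{JV}J = 0$, and defer the trace and Lee-form identities in items (4)--(5) to \cite{apostolov2024rigidity} Proposition~2.10 after noting its proof does not use integrability of $J$. Your closing remark about why the Nijenhuis term does not obstruct the cited computation is a useful elaboration that the paper leaves implicit.
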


\section{Strong torsion HKT structures} \label{s:HKT}

In this section we will address the case of strong torsion hyperHermitian structures.  Other than the derivation of the string soliton equation for the transverse geometry in Theorem \ref{t:hktreduction}, the results are mild extensions of the recent works \cite{apostolov2024rigidity, brienza2025structure}, and very similar in spirit to the previous section, so we will be brief.

\subsection{Definitions and structure theorems}

\begin{defn} \label{d:HKTstrongtorsion} A \emph{hypercomplex manifold} is a quadruple $(M^{4n}, I, J, K)$ a smooth manifold with integrable complex structures satisfying the quaternion relations.  A metric $g$ on $M$ is \emph{hyperHermitian} if it is compatible with $I, J$, and $K$.  We say that it defines a \emph{skew-torsion hyperHermitian structure} if there exists a connection $\N$ with skew-symmetric torsion $H_g$ satisfying $\N g = 0, \N I = 0, \N J = 0, \N K = 0$, in which case the connection is unique and agrees with the unique Bismut connections associated to the pairs $(g, I), (g, J)$, and $(g, K)$.  In other words,
\begin{align*}
    H_g = d^c_I \gw_I = d^c_J \gw_J = d^c_K \gw_K.
\end{align*}
Furthermore, it follows from \cite{fusi2024special} that the Lee forms of the Hermitian structures agree, i.e. there exists $\theta_g$ such that
\begin{align*}
    \theta_g = \theta_{\gw_I} = \theta_{\gw_J} = \theta_{\gw_K}.
\end{align*}
Finally, we say that the structure is a \emph{strong torsion hyperHermitian structure} if $d H_g = 0$.  It follows from \cite{Ivanovstring} that for a skew-torsion hyperHermitian structure, each underlying Hermitian structure has vanishing Bismut Ricci curvature, and moreover the holonomy of $\N$ lies in $Sp(n)$.
\end{defn}

\begin{thm} \label{t:hktstructure} (cf. \cite{JFS} Proposition 2.6 \cite{apostolov2024rigidity} Proposition 2.6, \cite{brienza2025structure} Proposition 2.6) Let $(M, g, J)$ be a compact strong torsion hyperHermitian structure.  Then
\begin{enumerate}
    \item $\Rc^{\N} +\ \N \theta_{g} = 0$,
    \item There exists a unique smooth function $f$ such that $\int_M e^{-f} dV_g = 1$ and $\Rc^{\N} + \N df = 0$,
    \item The vector field $V = \theta_{g}^{\sharp} - \N f$ satisfies $\N V = \N IV = \N JV = \N KV = 0$, and $L_V I = L_V J = L_V K = 0$.
\end{enumerate}
\begin{proof} Item (1) follows precisely as in item (1) of Theorem \ref{t:ahstructure} (it is even a special case of that claim).  Given (1), item (2) follows from Proposition \ref{p:reducedGRS} above.  It is clear that $V$ is parallel, hence so are $IV, JV$, and $KV$ since the connection preserves the hypercomplex structure.  Furthermore, it follows from the results cited in \cite{JFS, apostolov2024rigidity} that for the BHE structure $(g, I)$ the vector field $V_I = \theta_I^{\sharp} - \N f$ satisfies $L_{V_I} I = 0$, where the function $f$ is the unique minimizer of the $\FF$-functional as explained in Proposition \ref{p:reducedGRS}.  Since $f$ as constructed is independent of the complex structure and the Lee forms all agree, it follows that $V_I = V_J = V_K = V$, and so $L_V I = L_V J = L_V K = 0$ as claimed.
\end{proof}
\end{thm}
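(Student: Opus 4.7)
The plan is to bootstrap the single complex structure results of Section \ref{s:AH} to the hyperHermitian setting, exploiting the crucial fact, already recorded in Definition \ref{d:HKTstrongtorsion}, that the three Bismut torsion three-forms coincide, the three Lee forms coincide, and the soliton potential $f$ depends only on $(g, H_g)$ and not on the choice of complex structure.

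First I would observe that each of $(g, I)$, $(g, J)$, $(g, K)$ is a strong torsion Hermitian (hence skew-torsion almost Hermitian) structure with common torsion $H_g$, and that since $\Hol(\N) \subset Sp(n) \subset SU(2n)$, each underlying Hermitian structure has reduced holonomy. Theorem \ref{t:ahstructure}(1) applied to each gives $\Rc^{\N} + \N \theta_{\gw_I} = \Rc^{\N} + \N \theta_{\gw_J} = \Rc^{\N} + \N \theta_{\gw_K} = 0$, and item (1) is then just the statement that these three Lee forms all equal $\theta_g$. Item (2) is a direct application of Proposition \ref{p:reducedGRS} to the compact generalized Ricci soliton $(g, H_g, \theta_g^{\sharp})$: the existence and uniqueness of $f$ as the normalized minimizer of the $\FF$-functional depends only on the pair $(g, H_g)$.

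For item (3), subtracting the two soliton equations from items (1) and (2) yields $\N V^{\flat} = 0$, hence $\N V = 0$. Since $\N$ preserves each of $I, J, K$, parallelism propagates: $\N(IV) = \N(JV) = \N(KV) = 0$. To obtain the Lie derivative statements, I would apply Theorem \ref{t:ahstructure}(3) separately to each of the three Hermitian structures, producing a priori three vector fields $V_I, V_J, V_K$, each parallel and each preserving its own complex structure under the Lie bracket. The crux is then to identify all three with the single $V$: by construction $V_{I} = \theta_{\gw_I}^{\sharp} - \N f_I$, where $f_I$ is the minimizer associated to $(g, H_g)$ via the $\FF$-functional, and similarly for $J, K$. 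Since the functional only sees $(g, H_g)$, one has $f_I = f_J = f_K = f$, and since the Lee forms all agree with $\theta_g$, one concludes $V_I = V_J = V_K = V$, giving $L_V I = L_V J = L_V K = 0$.

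There is no serious technical obstacle here; the entire argument is a formal consolidation. The one point requiring care, and really the conceptual hinge, is the unambiguous assignment of the soliton potential $f$ to the pair $(g, H_g)$ rather than to any particular Hermitian refinement of the data, so that the three vector fields produced by Theorem \ref{t:ahstructure} applied to $(g,I)$, $(g,J)$, $(g,K)$ genuinely collapse to one. This is precisely the role played by the coincidence of Lee forms \cite{fusi2024special} and by the definition of $f$ through $\FF$ in Proposition \ref{p:reducedGRS}.
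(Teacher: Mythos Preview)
Your argument is correct and matches the paper's proof essentially line for line: item (1) as a special case of Theorem \ref{t:ahstructure}(1), item (2) via Proposition \ref{p:reducedGRS}, and item (3) by noting that $V$ is $\N$-parallel, propagating parallelism through $I,J,K$, and then identifying $V_I = V_J = V_K = V$ because the Lee forms coincide and the soliton potential $f$ depends only on $(g,H_g)$. The only cosmetic difference is that the paper cites \cite{JFS, apostolov2024rigidity} directly for $L_{V_I} I = 0$ in the integrable case, whereas you invoke the more general Theorem \ref{t:ahstructure}(3); both are valid.
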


\subsection{Dimension Reduction}

\begin{thm} \label{t:hktreduction} Let $(M, g, J)$ be a compact strong torsion hyperHermitian structure.  Assume $V$ as in Theorem \ref{t:ahstructure} is nonzero, and that $\{V, IV, JV, KV \}$ span an integrable distribution.  Let $\hat{g}$ denote the canonically induced metric on $\{V, IV JV, KV\}^{\perp}$.  Then,
\begin{enumerate}
    \item $\{V, IV, JV, KV\}$ generate a Lie algebra isomorphic to either $\mathfrak t^4$ or $\mathfrak u(2)$.
    \item Setting $\bar{\mu} = \mu \otimes V + I \mu \otimes IV + J \mu \otimes JV + K \mu \otimes KV$, we have $H_{g} = CS(\bar{\mu}) + H_{\hat{g}}$.
    \item $(\hat{g}, F_{\bar{\mu}}, H_{\hat{g}},f)$ is a string generalized Ricci soliton.
\end{enumerate}
\begin{proof} The proof of (\cite{brienza2025structure} Theorem 5.3) shows both items (1) and (2) in every dimension using the properties of $V$, though ultimately the proof is concerned with dimension $8$ (see Remark \ref{r:HKTremark} below).  For item (3) we observe that the proof of Proposition \ref{p:reducedGRS} easily extends to the case of a nonabelian Lie algebra as the computations of \cite{garcia2024pluriclosed} \S 5 are carried out in this more general case.
\end{proof}
\end{thm}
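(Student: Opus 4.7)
The plan is to break the theorem into its three constituent claims and dispatch each by combining Theorem \ref{t:hktstructure} with existing dimension reduction machinery. Since items (1) and (2) are attributed directly to the proof of (\cite{brienza2025structure} Theorem 5.3) in the author's own account, the proof is principally an assembly: verify that the hypotheses of those results are in hand, then extend the abelian Proposition \ref{p:reducedGRS} to the possibly non-abelian four-field setting.

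First, for integrability and item (1), I would invoke Theorem \ref{t:hktstructure}(3) to obtain that each of $V, IV, JV, KV$ is Bismut-parallel, and that $V$ preserves $I, J, K$ (hence so do $IV, JV, KV$). For any two Bismut-parallel fields $X, Y$ one has $[X, Y] = -H_g^{\sharp}(X, Y)$, where $H_g^{\sharp}$ is the torsion of $\N$. Combined with $i_V H_g = d\mu$ (Lemma \ref{l:reducedBC}) and its $I, J, K$-translates, together with the joint invariance of $H_g$ under the four Killing fields, the brackets close on $\mathrm{span}\{V, IV, JV, KV\}$, giving integrability. The isomorphism type as either $\mathfrak{t}^4$ or $\mathfrak{u}(2)$ then follows from the classification carried out in (\cite{brienza2025structure} Theorem 5.3), where the quaternionic relations and rigidity of the four Bismut-parallel Killing fields are shown to admit only these two outcomes.

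For item (2), I would extend Lemma \ref{l:reducedBC}(3) from a single Bismut-parallel field to the full four-dimensional distribution. Each of $\mu, I\mu, J\mu, K\mu$ satisfies the analogue of $i_V H_g = d\mu$, and the joint invariance of $H_g$ forces the decomposition $H_g = CS(\bar{\mu}) + H_{\hat g}$ with $H_{\hat g}$ basic relative to the full distribution. This unfolds exactly as in the cited statement of (\cite{brienza2025structure} Theorem 5.3(2)). For item (3), I would extend Proposition \ref{p:reducedGRS}: the computations of (\cite{garcia2024pluriclosed} \S 5) are performed in the generality of a non-abelian structure group, and Proposition 5.4 there applies verbatim to the horizontal quadruple $(\hat{g}, F_{\bar{\mu}}, H_{\hat g}, f)$ to yield the string GRS equations.

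The main obstacle is item (1), specifically narrowing the isomorphism class to $\mathfrak{t}^4$ or $\mathfrak{u}(2)$. Integrability and the fact that one obtains \emph{some} four-dimensional Lie algebra of Killing fields are immediate from the parallelism. What is not formal is the exclusion of other four-dimensional Lie algebras admitting a quaternionic triple of derivations compatible with the metric, and this requires the structural analysis of Brienza-Fino. The non-abelian extension of Proposition \ref{p:reducedGRS} needed for item (3) is technically a point to verify, but the reference \cite{garcia2024pluriclosed} already works in this generality, so no genuinely new calculation is required.
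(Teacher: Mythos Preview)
Your proposal is correct and follows essentially the same approach as the paper: integrability is derived from Theorem \ref{t:hktstructure}(3), items (1) and (2) are delegated to the proof of \cite{brienza2025structure} Theorem 5.3, and item (3) is obtained by extending Proposition \ref{p:reducedGRS} to the non-abelian setting via the computations in \cite{garcia2024pluriclosed} \S 5. The additional detail you give (the bracket formula $[X,Y] = -H_g^{\sharp}(X,Y)$ for Bismut-parallel fields, and the explicit extension of Lemma \ref{l:reducedBC}(3)) is consistent elaboration rather than a different route.
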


\begin{rmk} \label{r:HKTremark} In \cite{brienza2025structure} Theorem 5.3 it is shown that in dimension $8$ the relevant Lie algebra is always $\mathfrak u(2)$, and it is conjectured that this is true in all dimensions.  Given this, it follows from \cite{pedersen1998hypercomplex} Theorem 2.1 that the horizontal distribution admits a canonical quaternionic structure.  In this case it follows from the discussion here that it is moreover a skew-torsion quaternionic structure (cf. \cite{ivanov2002geometry}), in the sense that it admits a compatible connection with skew-symmetric torsion.  Moreover by Theorem \ref{t:hktreduction}(2) this torsion satisfies an anomaly cancellation equation.
\end{rmk}

\section{Strong torsion almost contact structures} \label{s:AC}
\subsection{Definitions and structure theorems}

We first recall the fundamental setup of almost contact metric manifolds (cf. \cite{blaircontact}).

\begin{defn} \label{d:almostcontactmetric} We say that $(M, g, \xi, \varphi)$ is an \emph{almost contact metric manifold} if $g$ is a Riemannian metric, $\xi$ is a vector field and $\varphi$ is an endomorphism such that, setting $\eta = \xi^{\flat}$,
\begin{align*}
    \brs{\xi} = 1, \quad \varphi^2 = - \Id +\ \eta \otimes \xi, \quad g(\varphi(X), \varphi(Y)) = g(X,Y) - \eta(X) \eta(Y), \quad \varphi(\xi) = 0.
\end{align*}
Such structures induce an almost Hermitian structure on $\xi^{\perp}$, with fundamental form $\gw(X,Y) = g(X, \varphi Y)$.  Furthermore, the endomorphism $\varphi$ has an associated Nijenhuis tensor (n.b. this is a modification of the general notion of the Nijenhuis tensor of an endomorphism)
\begin{align*}
    N(X, Y) = [\varphi X,\varphi Y] + \varphi^2[X,Y] - \varphi[\varphi X, Y] - \varphi[X,\varphi Y] + d \eta(X,Y) \xi.
\end{align*}
\end{defn}

\begin{defn} \label{d:almostcontacttorsion} We say that an almost contact metric manifold $(M^{2n+1}, g, \xi, \varphi)$ defines a \emph{skew-torsion almost contact structure} if there exists a connection $\N$ with skew-symmetric torsion $H_{\xi}$ satisfying $\N g = 0, \N \xi = 0, \N \varphi = 0$.  It follows from (\cite{friedrich2002parallel} Theorem 8.2) that this connection exists if and only if the Nijenhuis tensor is skew-symmetric and $\xi$ is a Killing field, in which case it is unique and one has
\begin{align*}
    H_{\xi} = \eta \wedge d \eta + d^{\varphi} \gw + N^{\perp},
\end{align*}
where $N$ is uniquely decomposed as
\begin{align*}
    N = \eta \wedge i_{\xi} N + N^{\perp},
\end{align*}
and
\begin{align*}
    d^{\varphi} \gw(X,Y,Z) = - d \gw(\varphi X, \varphi Y, \varphi Z).
\end{align*}
We say that structure is a \emph{strong torsion almost contact structure} if $d H_{\xi} = 0$.  Finally, we say that a strong almost contact torsion structure has \emph{reduced holonomy} if the holonomy of $\N$ lies in $SU(n)$.
\end{defn}

\begin{defn} \label{d:accurvatures} Given $(M^{2n+1}, g, \xi, \varphi)$ a skew-torsion almost contact torsion structure, define the \emph{Lee form} by
\begin{align*}
    \theta_{\xi} = - \tfrac{1}{2} \sum_{i=1}^{2n+1} H_{\xi}(\varphi \cdot, e_i, \varphi e_i).
\end{align*}
Furthermore we define the \emph{Bismut Ricci form}
\begin{align*}
    \rho(X,Y) = \tfrac{1}{2} \sum_{i=1}^{2n+1} R^{\N}(X,Y,e_i, \varphi e_i).
\end{align*}
\end{defn}

\begin{rmk} As explained in \cite{friedrich2002parallel} \S 9, the structure of an almost contact torsion structure on $M^{2n+1}$ automatically implies that the holonomy lies in $U(n)$, due to the parallelism of $\xi$ and $\varphi$.  A further reduction of the holonomy to $SU(n)$ is equivalent to the vanishing of the associated Bismut Ricci form.
\end{rmk}

\begin{thm} \label{t:reducedholonomyAC} Let $(M^{2n+1}, g, \xi, \varphi)$ be a strong torsion almost contact structure with reduced holonomy.  Then
\begin{enumerate}
    \item $\Rc^{\N} + \N \theta_{\xi} = 0$,
    \item There exists a unique smooth function $f$ such that $\int_M e^{-f} dV_g = 1$ and $\Rc^{\N} + \N df = 0$,
    \item The vector field $V = \theta_{\xi}^{\sharp} - \N f$ satisfies $\N V = L_V g = 0, \N \varphi V = L_{\varphi V} g = 0$.
\end{enumerate}
\begin{proof} First recall a curvature identity for almost contact metric structures which bears a subtle relationship to (\cite{friedrich2002parallel} Theorem 10.5).  Let
\begin{align*}
    \ga_{\xi} := - \tfrac{1}{2} \sum_{i=1}^{2n+1} H(\cdot, e_i, \varphi(e_i)).
\end{align*}
This tensor is closely related to $\varphi \theta_{\xi}$, although care must be taken with the direction $\xi$.  It follows from (\cite{friedrich2002parallel} Proposition 9.1) that
\begin{align*}
    \rho(X, Y) = \Rc^{\N} (X, \varphi Y) - (\N_X \ga)(Y).
\end{align*}
Using this we observe that for $Z = \varphi Y$, $g(Y, \xi) = 0$,
\begin{align*}
    \left(\Rc^{\N} + \N \theta_{\xi} \right) (X, Z) =&\ \Rc^{\N} (X, \varphi Y) + \N_X \theta_{\xi}(\varphi Y)\\
    =&\ \Rc^{\N}(X, \varphi Y) - \N_X \ga(Y) \\
    =&\ \rho(X, Y) = 0.
\end{align*}
Furthermore, it follows trivially from the definition of $\theta_{\xi}$ that $\theta_{\xi}(\xi) = 0$.  Since $\N \xi = 0$ it follows that $\N_X \theta_{\xi}(\xi) = 0$ for any $X$.  Also since $\xi$ is $\N$-parallel, it follows that $\Rc^{\N}(X, \xi) = 0$, hence $(\Rc^{\N} + \N \theta_{\xi})(X, \xi)= 0$, finishing the proof of item (1).  Given item (1), item (2) follows from Proposition \ref{p:reducedGRS}, and item (3) follows easily.
\end{proof}
\end{thm}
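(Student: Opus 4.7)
My plan is to exploit the close analogy with Theorem \ref{t:ahstructure}: once the analog of the Friedrich-Ivanov curvature identity is in hand, item (1) drops out, item (2) is immediate from Proposition \ref{p:reducedGRS}, and item (3) follows from parallelism of $\xi$ and $\varphi$ together with the metric-compatibility of $\N$.

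For item (1), I would look up (or re-derive from the decomposition of the curvature tensor of the Bismut connection) the almost contact analog of the identity
\[
\rho(X,Y) = \bigl(\Rc^{\N} + \N \theta_{\gw}\bigr)(X, JY)
\]
used in the almost Hermitian case. The natural guess is that for a strong torsion almost contact structure with reduced holonomy, one has some identity of the shape
\[
\rho(X,Y) = \bigl(\Rc^{\N} + \N \theta_{\xi}\bigr)(X, \varphi Y) + (\text{correction terms vanishing by } dH_{\xi}=0),
\]
where the correction involves $dH_{\xi}$ or $\N \xi$, both of which vanish under our hypotheses. This should be a routine, if somewhat technical, Bianchi-identity computation using the explicit formula $H_\xi = \eta \wedge d\eta + d^\varphi \gw + N^\perp$. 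Assuming this identity, $\rho \equiv 0$ (the reduced holonomy assumption) gives $(\Rc^{\N} + \N\theta_{\xi})(X, \varphi Y) = 0$ for all $X,Y$, hence $(\Rc^{\N} + \N\theta_{\xi})(X, Z) = 0$ whenever $Z \in \xi^\perp$.

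The one genuine wrinkle \emph{and likely the main obstacle} is that $\varphi$ has a one-dimensional kernel spanned by $\xi$, so the identity above only tests vectors transverse to $\xi$ in its second slot. I would therefore need to handle the $Z = \xi$ component of $\Rc^{\N} + \N \theta_\xi$ separately. Two observations suffice: first, $\theta_\xi(\xi) = 0$ directly from the definition (since $\varphi(\xi) = 0$ kills the summand), and since $\N \xi = 0$ we get $(\N_X \theta_\xi)(\xi) = X(\theta_\xi(\xi)) - \theta_\xi(\N_X \xi) = 0$; second, $\N\xi=0$ implies $R^{\N}(\cdot,\cdot)\xi = 0$, whence contracting gives $\Rc^{\N}(X,\xi) = 0$ (using the standard skew-symmetries of $R^{\N}$ for a metric connection with closed torsion). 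Combining, the $\xi$-component also vanishes, completing (1).

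For item (2), I apply Proposition \ref{p:reducedGRS}: (1) says $(g, H_\xi, X=\theta_\xi^\sharp)$ is a GRS, and on a compact manifold every GRS is a gradient GRS with unique normalized potential $f$. For item (3), Proposition \ref{p:reducedGRS} also yields $\N V = 0$, which together with metric-compatibility of $\N$ gives $L_V g = 0$ (using $(L_V g)(X,Y) = g(\N_X V, Y) + g(X, \N_Y V) + (\text{torsion terms involving } H_\xi(V, \cdot, \cdot))$, and noting the torsion terms are symmetric in $X,Y$ only if they vanish; in fact one checks directly that $\N V = 0$ with skew torsion implies $V$ is Killing, as recalled in Lemma \ref{l:reducedBC}(1)). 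Finally, since $\N \varphi = 0$, one has $\N(\varphi V) = \varphi(\N V) = 0$, and the same argument gives $L_{\varphi V} g = 0$.
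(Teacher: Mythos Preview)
Your proposal is correct and follows essentially the same route as the paper's proof: the curvature identity you anticipate is precisely \cite{friedrich2002parallel} Proposition 9.1, which reads $\rho(X,Y) = \Rc^{\N}(X,\varphi Y) - (\N_X \ga_\xi)(Y)$ for the auxiliary form $\ga_\xi(\cdot) = -\tfrac{1}{2}\sum_i H(\cdot,e_i,\varphi e_i)$, and the paper then observes $\ga_\xi(Y) = -\theta_\xi(\varphi Y)$ on $\xi^\perp$ (so the ``correction terms'' come from the $\ga_\xi$/$\theta_\xi$ identification rather than from $dH_\xi$, but the upshot is the same). Your treatment of the $\xi$-component, and of items (2) and (3) via Proposition \ref{p:reducedGRS} and $\N\varphi = 0$, matches the paper verbatim.
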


\subsection{Dimension Reduction}

\begin{thm} \label{t:acreduction} Let $(M , g, \varphi)$ be a compact strong torsion almost contact structure with reduced holonomy.  Assume $V$ as in Theorem \ref{t:reducedholonomyAC} is nonzero.  Then $\IP{V, \xi} = 0$, hence $\{V, \varphi V\}$ span a rank $2$ distribution of $\N$-parallel vector fields.  Let $(\hat{g}, \hat{\varphi},\xi)$ denote the canonically induced almost contact structure on $\{V, \varphi V\}^{\perp}$.  Then
\begin{enumerate}
    \item $(\hat{g}, \hat{J})$ is a skew-torsion almost contact structure,
    \item $H_{\xi} = CS(\mu) + CS(\varphi \mu) + H_{\hat{\xi}}$, where $H_{\hat{\xi}} = d^{\hat{\varphi}} \hat{\gw} + \hat{N}_{\hat{\varphi}}^{\perp}$,
    \item $(\hat{g}, F, H_{\hat{\xi}},f)$ is a string generalized Ricci soliton, where $F = F_{\mu} \otimes V + F_{\varphi\mu} \otimes \varphi V$,
    \item $\tr_{\hat{\gw}} F_{\mu} = 0, \tr_{\hat{\gw}} F_{\varphi \mu} = -2$,
    \item $\theta_{\hat{\gw}} = df$.
\end{enumerate}
\begin{proof} 
We first show $\IP{V, \xi} = 0$.  Since $\varphi \xi = 0$ it follows from the definition of $\theta_{\xi}$ that $\IP{\theta_{\xi}^{\sharp}, \xi} = 0$.  Since $V$ and $\xi$ are both $\N$-parallel, $\IP{V, \xi}$ is constant.  Since $M$ is compact, there exists $p \in M$ a critical point for $f$.  At such a point $\IP{V, \xi} = \IP{\theta_{\xi}^{\sharp}, \xi} = 0$, hence it vanishes everywhere as claimed.  Given this, we obtained the induced almost contact structure $(\hat{g}, \hat{\varphi}, \xi)$ by combining the induced almost Hermitian structure on $\{V, JV, \xi\}^{\perp}$ defined as in (\ref{f:inducedAH}) and enforcing $\brs{\xi} \equiv 1$.

It follows by an obvious extension of Lemma \ref{l:reducedBC}(5) that the induced connection $\hat{\N}$ satisfies $\hat{\N} \hat{\varphi} = 0$, hence item (1) follows.  Furthermore by uniqueness of the canonical connection and Lemma \ref{l:reducedBC}(3), we obtain item (2).  Item (3) follows from Proposition \ref{p:reducedGRS}.  Items (4) and (5) follow by tracing the formula for $H_{\xi}$ and decomposing into horizontal and vertical parts as in \cite{apostolov2024rigidity} Proposition 2.10.
\end{proof}
\end{thm}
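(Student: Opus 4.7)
The plan is to work in two stages: first verify the orthogonality $\IP{V,\xi} = 0$ so that $\{V, \varphi V, \xi\}$ form a mutually orthogonal triple of $\N$-parallel vector fields, and then apply the dimension reduction machinery of Section \ref{s:DR} to the rank-two quotient by $\{V, \varphi V\}$, keeping $\xi$ in the reduced structure. Both $V$ and $\xi$ are $\N$-parallel by Theorem \ref{t:reducedholonomyAC}, so $\IP{V,\xi}$ is a constant function on $M$. Since $\varphi \xi = 0$, the definition of $\theta_\xi$ in Definition \ref{d:accurvatures} immediately gives $\theta_\xi(\xi) = 0$; compactness then produces a critical point of $f$ at which $V = \theta_\xi^\sharp$, so $\IP{V,\xi} = 0$ there and therefore everywhere.

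The heart of the proof is an iterated application of Lemma \ref{l:reducedBC}, first along $V$ with principal connection $\mu = V^\flat$ and then along $\varphi V$ with $\varphi\mu = (\varphi V)^\flat$. Item (2) of the lemma produces the canonical horizontal metric $\hat g$, item (3) yields the decomposition $H_\xi = CS(\mu) + CS(\varphi\mu) + H_{\hat\xi}$ with $H_{\hat\xi}$ basic, and the extension of item (5) applied to the $\N$-parallel tensors $\eta$ and $\gw$ shows that their basic components are $\hat\N$-parallel, giving $\hat\N\hat\varphi = 0$ and $\hat\N\xi = 0$. Uniqueness of the skew-torsion connection in the almost contact setting (Definition \ref{d:almostcontacttorsion}) then identifies $\hat\N$ as the canonical connection of $(\hat g, \hat\varphi, \xi)$, establishing (1) and forcing the intrinsic formula $H_{\hat\xi} = d^{\hat\varphi} \hat\gw + \hat N^\perp_{\hat\varphi}$ in (2). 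Item (3) follows from Proposition \ref{p:reducedGRS} applied to the rank-two foliation, using the same extension already invoked in Theorem \ref{t:ahreduction}(3) and Theorem \ref{t:hktreduction}(3).

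For items (4) and (5) I plan to follow the blueprint of Theorem \ref{t:ahreduction}(4)--(5) and \cite{apostolov2024rigidity} Proposition 2.10. Because $V$ and $\varphi V$ both preserve $\varphi$, the curvature forms $F_\mu$ and $F_{\varphi\mu}$ are of type $(1,1)$ with respect to $\hat\varphi$; the trace identities $\tr_{\hat\gw} F_\mu = 0$ and $\tr_{\hat\gw} F_{\varphi\mu} = -2$ are extracted by contracting the decomposition of $H_\xi$ against $\hat\varphi$ and matching it against the intrinsic expression for $H_{\hat\xi}$, while (5) emerges from comparing the Lee form $\theta_\xi$ with its horizontal-vertical split after subtracting $\N f$. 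The main obstacle I anticipate is the bookkeeping inherent to the odd-dimensional setting: the Reeb direction $\xi$ must be tracked carefully through both the reduction (where it behaves as a horizontal $\N$-parallel vector contributing trivially to $\hat\gw$) and the trace computations, and the interaction between the splitting and the definitions of $\theta_\xi$, $N^\perp$, and $d^\varphi\gw$ requires a case-by-case check that the horizontal and vertical pieces pair consistently after restriction to $\{V, \varphi V\}^\perp$.
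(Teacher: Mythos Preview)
Your proposal is correct and follows essentially the same approach as the paper: the orthogonality $\IP{V,\xi}=0$ via parallelism and a critical point of $f$, the extension of Lemma \ref{l:reducedBC} to the rank-two reduction along $\{V,\varphi V\}$, uniqueness of the skew-torsion connection for items (1)--(2), Proposition \ref{p:reducedGRS} for item (3), and the trace computation modeled on \cite{apostolov2024rigidity} Proposition 2.10 for items (4)--(5). Your framing as an iterated reduction is equivalent to the paper's ``obvious extension'' of Lemma \ref{l:reducedBC}(5), and your extra remark that $F_\mu, F_{\varphi\mu}$ are type $(1,1)$ is a harmless elaboration the paper leaves implicit.
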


\section{Strong torsion \texorpdfstring{$SU(3)$}{SU(3)} structures} \label{s:SU(3)}

In this section we begin by recalling fundamental definitions regarding $SU(3)$ structures, their torsion, the basic structure theory of those admitting a connection with skew-symmetric torsion, and the result of Ivanov-Stanchev \cite{ivanov2024riemannian} showing that those with strong torsion are automatically generalized Ricci solitons and admit a canonical symmetry (Theorem \ref{t:SU(3)structure}).  Next in Theorem \ref{t:SU(3)rigidity} we show that when the canonical symmetry is present, the complex structure is automatically integrable, hence the structure is Bismut Hermitian-Einstein, and described in detail by the structure theory of \cite{apostolov2024rigidity}.

\subsection{Definitions and structure theorems}

\begin{defn} \label{su(3)-Structure}
We say that $(M^6, \omega, \Omega^{+})$ is an \emph{$SU(3)$ structure} if $\omega\in\Lambda^{2}$ and $\Omega^{+}\in \Lambda^{3}$ may pointwise be identified with the following forms $\omega_{0}$ and $\Omega_{0}^{+}$ on $\mathbb{R}^{6}$:
\begin{align}\label{f:SU3Pointwise}
\omega_{0} &= e^{12} + e^{34} + e^{56}, \quad 
\Omega_{0}^{+} = e^{135} - e^{146} - e^{236} - e^{245}.
\end{align}
Such forms automatically satisfy
\begin{gather*}
\begin{split}
\omega\wedge\Omega^{+} &= 0, \qquad 
\omega\wedge\omega\wedge\omega = \tfrac{3}{2}\Omega^{+}\wedge\Omega^{-},
\end{split}
\end{gather*}
where the tensor $\Omega^-$ is obtained as follows.  First we define an associated Riemannian metric $g_{\gw}$ via
\begin{align*}
    g_{\gw}(X,Y) \tfrac{\gw^3}{6} = - \tfrac{1}{2} (i_X \gw) \wedge (i_Y \Omega_+) \wedge \Omega_+,
\end{align*}
and then $\Omega^{-} := \star_{g_{\gw}} \Omega^+$.
Moreover, it follows that $(g_{\gw}, \gw)$ are the metric and fundamental form of an almost Hermitian structure, with almost complex structure denoted $J_{\gw}$, and then $\Omega^{\pm}$ are type $(3,0) + (0,3)$.\end{defn} 

\begin{rmk} The forms $(\omega, \Omega^{+})$ underlying an $SU(3)$ structure are stable in the sense of Hitchin, namely, they pointwise lie in open orbits of $GL(V))$ on $\Lambda^{2}(V)$ and $\Lambda^{3}(V)$, respectively. Such globally-defined stable forms in general are associated with $G$-structures \cite{hitchin2001stable}.
\end{rmk}

The space of one-forms is an irreducible representation of $SU(3)$, while the space of two and three-forms decompose into irreducible representations as follows:
\begin{align*}
\Lambda^{2} &= \Lambda_{1}^2 \oplus \Lambda_{6}^2 \oplus \Lambda_{8}^2, \qquad 
\Lambda^{3} = \Lambda_{1\oplus 1}^{3} \oplus \Lambda_{6}^{3} \oplus \Lambda_{12}^{3}.
\end{align*}
These representations admit explicit descriptions:
\begin{gather} \label{f:SU(3)reps}
\begin{split}
\Lambda_{1}^{2} &= \{ a \omega\ |\ a\in C^{\infty}(M)\}\\
\Lambda_{6}^{2} &= \{\star(\alpha\wedge\Omega^{+})\ |\ \alpha \in \Lambda^{1}\}\\
\Lambda_{8}^{2} &= \{\alpha\in\Lambda^{2}\ |\ \alpha\wedge\omega\wedge\omega=0,\ \alpha\wedge\Omega^{+}=0 \}\\
\Lambda_{1\oplus1}^{3} &= \{a_+ \Omega^{+} + a_- \Omega^{-}\ |\ a_{\pm} \in C^{\infty}(M)\}\\
\Lambda_{6}^{3} &= \{\alpha\wedge\omega\ |\ \alpha\in\Lambda^{1}\}\\
\Lambda_{12}^{3} &= \{\alpha\in\Lambda^{3}\ |\ \alpha\wedge\omega=0, \alpha\wedge\Omega^{\pm} = 0\}.
\end{split}
\end{gather}

\begin{defn} Given an $SU(3)$ structure $(\gw, \Omega^+)$ on a 6-manifold, following \cite{chiossisalamon2002} we define torsion forms $\gs_i, \nu_i, \pi_i$ via
\begin{gather*}
\begin{split}
d\omega &= -\tfrac{3}{2}\sigma_{0}\Omega^{+} + \tfrac{3}{2}\pi_{0} \Omega^{-} + \nu_{1}\wedge\omega + \nu_{3},\\
d\Omega^{+} &= \pi_{0}\omega^2 + \pi_{1}\wedge\Omega^{+} - \pi_{2}\wedge\omega,\\
d\Omega^{-} &= \sigma_{0}\omega^2 + (J\pi_{1})\wedge\Omega^{+} - \sigma_{2}\wedge\omega.
\end{split}
\end{gather*}
Furthermore, the \emph{Lee form} of the $SU(3)$ structure is $\theta_{\gw} := \nu_1$.  An $SU(3)$ structure is \emph{balanced} if $\theta_{\gw} = 0$, \emph{half-flat} if $\pi_{0}=0, \nu_{1}=\pi_{1}=0, \textnormal{and}\hspace{2mm} \pi_{2}=0$, and \emph{co-coupled} if it is half-flat and $\sigma_{2}=0$.
\end{defn}

\begin{defn} \label{d:strongSU(3)T} We say $(M, \gw, \Omega^+)$ is a \emph{skew-torsion $SU(3)$ structure} if
there exists a connection $\N$ with skew-symmetric torsion $H_{\gw}$ satisfying $\N \gw = 0, \N \Omega = 0$.  It follows from (\cite{friedrich2002parallel} Theorem 10.1) that in this case the connection is unique and moreover
    \begin{align} \label{f:SU(3)Bismuttorsion}
    H_{\gw} = d^c \gw + N,
    \end{align}
    where $N$ is the Nijenhuis tensor of the almost complex structure $J_{\gw}$, which in particular is thus skew-symmetric.  We say it is a \emph{strong torsion $SU(3)$ structure} if furthermore $d H_{\gw} = 0$.
\end{defn}

\begin{thm} \label{t:SU(3)structure} (cf. \cite{ivanov2024riemannian} Theorems 6.1, 6.5) Let $(M, \gw, \Omega^+)$ be a compact strong torsion $SU(3)$ structure.  Then
\begin{enumerate}
    \item $\Rc^{\N} +\ \N \theta_{\gw} = 0$,
    \item $d \theta_{\gw} \in \Lambda^{1,1}_0$,
    \item $\N N = 0$,
    \item There exists a unique smooth function $f$ such that $\int_M e^{-f} dV_g = 1$ and $\Rc^{\N} + \N df = 0$,
    \item The vector field $V = \theta_{\gw}^{\sharp} - \N f$ satisfies $\N V = L_V \gw = L_V J = L_V \Omega^{\pm} = 0$, and also $\N JV = L_{JV} \gw = L_{JV} J = L_{JV} \Omega^{\pm} = 0$.
\end{enumerate}
\end{thm}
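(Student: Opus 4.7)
The plan is to prove items (1)--(5) in order, leveraging the general framework of Section \ref{s:DR} and the almost Hermitian machinery of Section \ref{s:AH} to reduce most of the statements to the analogues already treated there, with the $SU(3)$-specific content concentrated in item (3) via the global trivialization of $\Lambda^{3,0+0,3}$ by $\{\Omega^+, \Omega^-\}$.

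For item (1), observe that by (\ref{f:SU(3)Bismuttorsion}) the Nijenhuis tensor of $J_\gw$ lies in $\Lambda^{3,0+0,3}$ and is in particular skew-symmetric, so $(M, g, J_\gw)$ is a strong torsion almost Hermitian structure. The holonomy reduction from $U(3)$ to $SU(3)$ is equivalent to the vanishing of the Bismut Ricci form $\rho$, and the almost Hermitian identity $\rho(X, Y) = (\Rc^\N + \N\theta_\gw)(X, J_\gw Y)$ used in the proof of Theorem \ref{t:ahstructure} then yields (1). For item (2): if the vector field $V$ produced by items (4) and (5) vanishes, then $\theta_\gw = df$ is exact and $d\theta_\gw = 0$ is trivially primitive; otherwise Theorem \ref{t:ahreduction}(4), whose proof does not require integrability of $J$, gives $d\theta_\gw = F_\mu \in \Lambda^{1,1}_0$.

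Item (3) is the main obstacle, being the genuinely $SU(3)$-specific step. Since $\N$ preserves both the Hermitian type decomposition and the irreducible $SU(3)$-summands, and since $N$ lies pointwise in the rank-$2$ real subbundle $\Lambda^{3,0+0,3}$, which in the $SU(3)$ setting is globally trivialized by $\{\Omega^+, \Omega^-\}$, one may write $N = a\,\Omega^+ + b\,\Omega^-$ for smooth functions $a, b$. Since $\N \Omega^\pm = 0$, the parallelism $\N N = 0$ then reduces to $da = db = 0$. To establish these, substitute $H_\gw = d^c\gw + a\,\Omega^+ + b\,\Omega^-$ into the strong torsion condition $dH_\gw = 0$, expand using the torsion-form identities of \cite{chiossisalamon2002} for $d\Omega^\pm$, and project the resulting $4$-form identity onto the $SU(3)$-irreducible summands isomorphic to $\Lambda^1 \wedge \Omega^\pm$. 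The difficulty lies in correctly isolating the various type components of $dd^c\gw$ in the absence of integrability of $J_\gw$, and in verifying that only $da\wedge\Omega^+$ and $db\wedge\Omega^-$ survive in the relevant summand, thereby forcing their vanishing.

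Items (4) and (5) follow from general principles. Item (4) is immediate from item (1) via Proposition \ref{p:reducedGRS}. For item (5), $\N V = 0$ (hence $L_V g = 0$) follows from Proposition \ref{p:reducedGRS}, and $\N J_\gw V = 0$ from $\N J_\gw = 0$. The invariances $L_V \gw = L_V J_\gw = 0$ and their $J_\gw V$ analogues follow verbatim from the argument in Theorem \ref{t:ahstructure}, since $i_V i_{J_\gw V} N = 0$ whenever $N \in \Lambda^{3,0+0,3}$ (as $(J_\gw V)^{(1,0)} = i\, V^{(1,0)}$ forces two contractions of proportional $(1,0)$-vectors against a $(3,0)$-form). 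Finally, $L_V \Omega^\pm = 0$ follows from the torsion-corrected Lie derivative identity: since both $V$ and $\Omega^\pm$ are $\N$-parallel, $L_V\Omega^\pm$ reduces to a purely algebraic contraction of $i_V H_\gw$ against $\Omega^\pm$; splitting $H_\gw = d^c\gw + N$ and applying the horizontal-test-vector argument of Theorem \ref{t:ahstructure} (for the $d^c\gw$ contribution) together with the $SU(3)$-type vanishing of item (3) (for the $N$ contribution) completes the proof.
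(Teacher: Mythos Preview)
The paper does not give an independent proof of this theorem: the remark following the statement simply cites \cite{ivanov2024riemannian} for items (1)--(3) and (5), and points to Proposition~\ref{p:reducedGRS} for item (4). Your proposal is therefore a genuinely different, more self-contained route, reducing as much as possible to the almost Hermitian machinery of \S\ref{s:AH}. For items (1), (4), and the parts of (5) concerning $g$, $\gw$, and $J$, this reduction is clean and correct; for item (2) the case split on $V$ is a nice trick, though you should make explicit that the logical order is $(1)\Rightarrow(4)\Rightarrow$(part of $(5)$)$\Rightarrow(2)$ rather than the numerical one.

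There is a genuine gap in item (3). You correctly observe that $N = a\,\Omega^+ + b\,\Omega^-$ and that $\N N = 0$ reduces to $da = db = 0$, but the proposed mechanism --- project $dH_\gw = 0$ onto the summands $\Lambda^1 \wedge \Omega^\pm$ --- is not carried out, and you yourself flag the difficulty. In fact this projection is delicate: the terms $a\, d\Omega^+$ and $b\, d\Omega^-$ already contribute $a\,\pi_1 \wedge \Omega^+$ and $b\,(J\pi_1)\wedge\Omega^+$ to that summand, and $dd^c\gw$ contributes as well, so one cannot read off $da = db = 0$ directly. The argument in \cite{ivanov2024riemannian} for the constancy of $\sigma_0,\pi_0$ (equivalently $\N N = 0$) goes through curvature identities for the Bismut connection, not merely the Bianchi identity $dH_\gw = 0$; your sketch does not supply a substitute for that input.

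Your argument for $L_V\Omega^\pm = 0$ in item (5) is also not right as written. You invoke the horizontal-test-vector argument of Theorem~\ref{t:ahstructure} for the $d^c\gw$ contribution and item (3) for the $N$ contribution, but the argument in Theorem~\ref{t:ahstructure} is precisely the one showing $i_V N = 0$ (it handles the $N$ piece, not $d^c\gw$), and $\N N = 0$ says nothing about $i_V N$. The clean route is: from $\N V = 0$ one has $i_V H_\gw = d\mu = d\theta_\gw$, and by item (2) this lies in $\Lambda^{1,1}_0 = \Lambda^2_8 \cong \mathfrak{su}(3)$, which is exactly the annihilator of $\Omega^\pm$ under the $\mathfrak{so}(6)$-action; since $L_V\Omega^\pm$ is this action applied to $i_V H_\gw$, it vanishes. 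The analogous claim for $JV$ is more subtle (one must check $i_{JV}H_\gw \in \Lambda^2_8$, and the trace $\tr_\gw i_{JV}H_\gw = -2\,\theta_\gw(V)$ is not obviously zero from what you have written), and your proposal does not address it.
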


\begin{rmk} 
Items (1),(2) and (3) are (\cite{ivanov2024riemannian} Theorem 6.1).  Items (4) and (5) are contained in (\cite{ivanov2024riemannian} Theorem 6.5) which is stated as an equivalence although case (a) of that theorem holds a priori as discussed above.  Note also item (4) is explained in Proposition \ref{p:reducedGRS} above.
\end{rmk}


\begin{thm} \label{t:SU(3)rigidity} Let $(M, \gw, \Omega^+)$ be a compact strong torsion $SU(3)$ structure such that $V$ is nonvanishing.  Then $J_{\gw}$ is integrable, and hence $(\gw, J_{\gw})$ defines a Bismut Hermitian-Einstein structure.
\begin{proof} Using Theorem \ref{t:ahreduction} we obtain a canonical skew-torsion almost Hermitian structure $(\hat{g}, \hat{J})$ on $\{V, JV\}^{\perp}$.  As this distribution is four-dimensional, and we know ${N}_{\hat{\gw}}$ is type $(3,0) + (0,3)$ and skew symmetric, it follows that ${N}_{\hat{\gw}} = 0$.   Now from Theorem \ref{t:ahreduction} item (1) we have
\begin{align*}
    H_{\gw} = d^c \gw + N = CS(\mu) + \hat{H}.
\end{align*}
Moreover, since $F_{\mu}, F_{J\mu} \in \Lambda^{1,1}$ it follows that $CS(\mu) + CS(J \mu) \in \Lambda^{2,1 + 1,2}$.
Hence
\begin{align*}
    \tfrac{1}{4} N = \pi_{3,0 + 0,3} H_{\gw} = \pi_{3,0 + 0,3} \hat{H} = \tfrac{1}{4} \hat{N} = 0.
\end{align*}
Hence $J$ is integrable and thus $(g, H)$ is Bismut-Hermitian-Einstein.
\end{proof}
\end{thm}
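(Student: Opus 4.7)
The plan is to dimension-reduce to the four-dimensional transverse geometry and exploit the fact that in complex dimension two there is no room for a nontrivial skew-symmetric $(3,0)+(0,3)$-form. Since $V$ is nonvanishing, Theorem \ref{t:SU(3)structure}(5) guarantees that both $V$ and $J_{\gw} V$ are $\N$-parallel, of constant norm, and preserve every tensor in the $SU(3)$ structure. Consequently the underlying almost Hermitian data satisfies the hypotheses of Theorem \ref{t:ahreduction}, and I would invoke that theorem to produce a transverse skew-torsion almost Hermitian structure $(\hat{g}, \hat{J})$ on the rank-four horizontal distribution $\{V, J_{\gw} V\}^{\perp}$, together with the decomposition
\begin{equation*}
    H_{\gw} = CS(\mu) + CS(J\mu) + H_{\hat{\gw}}, \qquad H_{\hat{\gw}} = d^c \hat{\gw} + N_{\hat{\gw}}.
\end{equation*}

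The decisive algebraic observation is that on a rank-two complex vector space the bundle $\Lambda^{3,0}$ is trivial; hence any three-form of bidegree $(3,0)+(0,3)$ on the four-dimensional horizontal distribution must vanish. Since $N_{\hat{\gw}}$ is totally skew-symmetric and sits in $\Lambda^{3,0+0,3}$ by its very definition as a Nijenhuis tensor, this forces $N_{\hat{\gw}} \equiv 0$, so that $\hat{J}$ is automatically integrable.

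Finally, I would transfer this vanishing back up to $M$. By Theorem \ref{t:ahreduction}(4) the curvatures $F_{\mu}$ and $F_{J\mu}$ are both of type $(1,1)$, so the Chern-Simons contributions $\mu \wedge F_{\mu}$ and $J\mu \wedge F_{J\mu}$ lie in $\Lambda^{2,1+1,2}$. Comparing the ansatz above with the identity $H_{\gw} = d^c \gw + N$ from (\ref{f:SU(3)Bismuttorsion}) and projecting onto $\Lambda^{3,0+0,3}$, one sees that the $(3,0)+(0,3)$ part of $H_{\gw}$, which is precisely $\tfrac{1}{4} N$, coincides with the corresponding projection of $H_{\hat{\gw}}$, which is $\tfrac{1}{4} N_{\hat{\gw}} = 0$. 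Hence $N \equiv 0$ on all of $M$, the almost complex structure $J_{\gw}$ is integrable, and the soliton identity $\Rc^{\N} + \N \theta_{\gw} = 0$ from Theorem \ref{t:SU(3)structure}(1) becomes precisely the Bismut Hermitian-Einstein condition. The only real subtlety is the bookkeeping confirming that the $(3,0)+(0,3)$-projection on the total space reads off cleanly from the transverse $(3,0)+(0,3)$-projection, which follows because the pair $\{V, J_{\gw} V\}$ spans a $J$-invariant distribution and $d^c \gw - d^c \hat{\gw}$ absorbs only into the bidegrees carried by the Chern-Simons terms.
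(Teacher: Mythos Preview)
Your proposal is correct and follows essentially the same route as the paper: reduce via Theorem \ref{t:ahreduction}, use that the transverse distribution has complex dimension two to force $N_{\hat{\gw}}=0$, and then project $H_{\gw}$ onto $\Lambda^{3,0+0,3}$ using $F_{\mu},F_{J\mu}\in\Lambda^{1,1}$ to conclude $N=0$. The additional remark you make about why the bidegree projection passes cleanly from the total space to the transverse space is a helpful clarification that the paper leaves implicit.
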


A detailed study of BHE threefolds with $V$ nonvanishing was undertaken in \cite{apostolov2024rigidity}, describing the structure in terms of a canonical transverse K\"ahler structure satisfying a scalar PDE.  As noted above in Remark \ref{r:BHErigid}, in the BHE case $V = 0$ if and only if the structure is K\"ahler Calabi-Yau.  We show in the next example that this rigidity fails in the nonintegrable case.

\begin{ex} \label{ex:nonintsu(3)} (cf. \cite{fino2025some} Ex. 5.8) Consider $S^3 \times S^3 \cong SU(2) \times SU(2)$ with the usual left-invariant coframe $\{e^i\}$ satisfying
\begin{align*}
    d e^1 = -2 e^{23},\ d e^2 = -2 e^{31},\ d e^3 = - 2 e^{12},\ d e^4 = - 2 e^{56},\ d e^5 = - 2 e^{64},\ d e^6 = - 2 e^{45}.
\end{align*}
Next define a new coframe
\begin{align*}
    \eta^1 =&\ \tfrac{1}{2} (e^1 - e^4),\ \eta^2 = \tfrac{1}{2} (e^1 + e^4),\ \eta^3 = \tfrac{1}{2} (e^2 - e^5),\\ \eta^4 =&\ \tfrac{1}{2} (e^2 + e^5),\ \eta^5 = \tfrac{1}{2} (e^3 - e^6),\ \eta^6 = \tfrac{1}{2} (e^3 + e^6).
\end{align*}
We then define an $SU(3)$ structure via
\begin{align*}
    \gw =&\ \eta^{12} + \eta^{34} + \eta^{56},\\
    \Omega^+ + \i \Omega^{-} =&\ \left( \eta^1 + \i \eta^2 \right) \wedge (\eta^3 + \i \eta^4) \wedge (\eta^5 + \i \eta^6).
\end{align*}
For this structure the only nonvanishing torsion components are 
\begin{align*}
    \gs_0 = -2,\quad \nu_3 = 3 \eta^{135} + \eta^{146} + \eta^{236} + \eta^{245}.
\end{align*}
Using this computation it follows further that $d H_{\gw} = 0$, hence this defines a strong torsion $SU(3)$ manifold.
In particular the Nijenhuis tensor is nonvanishing, while the Lee form $\theta_{\gw} = \nu_1 = 0$.  Moreover, due to the left-invariance, the soliton potential $f$ is also left-invariant, hence constant.  It follows that $V = 0$.
\end{ex}

\section{Strong torsion \texorpdfstring{$G_2$}{G2} structures} \label{s:G2}

In this section we begin by recalling fundamental definitions regarding $G_2$ structures, their torsion, the basic structure theory of those admitting a connection with skew-symmetric torsion, and the result of Ivanov-Stanchev \cite{ivanov2023riemannianG2} showing that those with strong torsion are automatically generalized Ricci solitons and admit a canonical symmetry (Theorem \ref{t:g2structure}).  We then in Proposition \ref{p:G2rigidity} extend a rigidity observation of \cite{ivanov2023riemannianG2}, showing precisely how torsion-free $G_2$ structures enter the theory.  Next in Theorem \ref{t:G2reduction} we extend (\cite{fino2025some} Theorem 6.1) and show the transverse $SU(3)$ structure is skew-torsion, and a string generalized Ricci soliton.  Moreover we compute its torsion, showing that it is conformally half-flat, and balanced, with conformal factor $e^{-f}$.  We end by extending a rigidity phenomena of \cite{fino2025some}.  For further recent works on strong torsion $G_2$ structures see \cite{cleyton2021metric, fino2023twisted, lotay2024coupled, moroianu2025mathrm}. 

\subsection{Definitions and structure theorems}

\begin{defn}\label{d:G2Structure} We say a three-form $\varphi$ on a 7-manifold $M$ defines a \emph{$G_{2}$ structure} if $\varphi$ may be pointwise identified with the form $\varphi_{0}$ on $\mathbb{R}^{7}$:
\begin{equation}\label{f:G2pointwise}
\varphi_{0} = e^{127} + e^{135} - e^{146} - e^{236} - e^{245} + e^{347} + e^{567}.
\end{equation}
Such a form $\varphi\in\Lambda^{3}(M)$ is called \emph{positive} and is stable in the sense of Hitchin. The three-form $\varphi$ algebraically determines a Riemannian metric via
\begin{align*}
    g_{\varphi}(X,Y)\textnormal{vol} = \tfrac{1}{6} X\lrcorner\varphi\wedge Y\lrcorner\varphi\wedge\varphi.
\end{align*}
\end{defn}

Fixing a $G_2$ structure, we may decompose the spaces of differential forms into irreducible representations of $G_2$ as follows:
\begin{align*}
\Lambda^{1} = \Lambda_{7}^{1}, \quad 
\Lambda^{2} = \Lambda_{7}^{2} \oplus \Lambda_{14}^{2}, \quad 
\Lambda^{3} = \Lambda_{1}^{3} \oplus \Lambda_{7}^{3} \oplus \Lambda_{27}^{3},
\end{align*}
where
\begin{align*}
\Lambda_{7}^{2} &= \{\alpha\in\Lambda^{2}(M) | \alpha\wedge\varphi = 2\star\alpha\} \\
\Lambda_{14}^{2} &= \{\alpha\in\Lambda^{2}(M) | \alpha\wedge\varphi = -\star\alpha\}\\
\Lambda_{1}^{3} &= \{f\varphi | f\in C^{\infty}(M)\}\\
\Lambda_{7}^{3} &= \{X\lrcorner\star_{\varphi}\varphi | X\in\Lambda^{1} \}\\
\Lambda_{27}^{3} &= \{\gamma\in\Lambda^{3}(M) | \gamma\wedge\varphi = 0, \gamma\wedge\star_{\varphi}\varphi  =0\}.
\end{align*}

\begin{defn} Given a $G_2$ structure $\varphi$, following Fernandez and Gray \cite{fernandez1982riemannian}, we define torsion forms $\tau_i$ via the irreducible components of $d \varphi$ and $d (\star_{\varphi} \varphi)$:
\begin{align*}
d\varphi &= \tau_{0} (\star_{\varphi} \varphi) + 3\tau_{1}\wedge\varphi + \star\tau_{3}\\
d (\star_{\varphi} \varphi) &= 4\tau_{1}\wedge (\star_{\varphi} \varphi) + \tau_{2}\wedge\varphi.
\end{align*}
\end{defn}

\begin{defn} \label{d:strongG2T} We say $(M, \varphi)$ is a \emph{skew-torsion $G_2$ structure} if
there exists a connection $\N$ with skew-symmetric torsion $H_{\varphi}$ satisfying $\N g_{\varphi} = 0, \N \varphi = 0$.  It follows from (\cite{friedrich2002parallel} Theorem 4.8) that such a connection exists if and only if $\tau_{2}=0$ and in this case the connection is unique and moreover
    \begin{align} \label{f:G2torsion}
    H_{\varphi} = - \star_{\varphi} d \varphi + \star_{\varphi} (\theta_{\varphi} \wedge \varphi) + \tfrac{1}{6} \IP{d \varphi, \star_{\varphi} \varphi} \varphi,
    \end{align}
    where
    \begin{align*}
    \theta_{\varphi} = 4 \tau_1
    \end{align*}
    is the \emph{Lee form} of $\varphi$ (cf. \cite{bryant1987metrics}).
    We say the structure is a \emph{strong torsion $G_2$ structure} if furthermore $d H_{\varphi} = 0$.
\end{defn}

\begin{thm} \label{t:g2structure} (cf. \cite{ivanov2023riemannianG2} Theorem 7.1, 7.3) Let $(M, \varphi)$ be a compact strong torsion $G_2$-torsion manifold.  Then
\begin{enumerate}
    \item $\Rc^{\N} +\ \N \theta_{\varphi} = 0$,
    \item $\tau_0$ is constant,
    \item $d \theta_{\varphi} \in \Lambda_{14}^2$,
    \item There exists a unique smooth function $f$ such that $\int_M e^{-f} dV_g = 1$ and $\Rc^{\N} + \N df = 0$,
    \item The vector field $V = \theta_{\varphi}^{\sharp} - \N f$ satisfies $\N V = L_V \varphi = 0$,
\end{enumerate}
\end{thm}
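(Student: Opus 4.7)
The plan is to follow the template established by Theorems \ref{t:ahstructure} and \ref{t:SU(3)structure}, adapted to the $G_2$ setting where the special algebraic structure of the stable form $\varphi$ (together with its Hodge dual $\star_\varphi \varphi$) produces the soliton equation directly from $dH_\varphi = 0$. The key analytic input is a curvature identity analogous to the one used in the almost Hermitian case.

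For item (1), I would introduce the $G_2$ analogue of the Bismut Ricci form by tracing $R^{\N}$ against the parallel form $\varphi$. Since $\N \varphi = 0$ by hypothesis, the holonomy of $\N$ is contained in $G_2 \subset SO(7)$, so this trace vanishes identically. The substantive step is then to establish a pointwise curvature identity of the schematic form
\begin{align*}
\bigl(\Rc^{\N} + \N \theta_\varphi\bigr)(X, Y) = \bigl( \text{$G_2$-traced curvature} \bigr)(X, Y) + \bigl(\text{torsion terms involving } dH_\varphi \bigr),
\end{align*}
analogous to (\cite{friedrich2002parallel} Theorem 10.5). The derivation uses the first and second Bianchi identities for $\N$, the $G_2$-specific algebraic relations relating $i_X \varphi$ and $\star_\varphi \varphi$, and the explicit torsion formula (\ref{f:G2torsion}). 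The assumption $dH_\varphi = 0$ cancels the torsion contribution, and the $G_2$-trace vanishes, forcing $\Rc^{\N} + \N \theta_\varphi = 0$. This is the miraculous fact specific to dimension seven, where closedness of torsion alone suffices (in contrast to the almost Hermitian or $SU(3)$ case, where reduced holonomy is needed as an additional hypothesis).

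For items (2) and (3), I would directly compute $dH_\varphi$ using (\ref{f:G2torsion}) together with the Fernandez-Gray decomposition of $d\varphi$ and $d\star_\varphi \varphi$ into the $\tau_i$ components. Projecting $dH_\varphi = 0$ onto irreducible $G_2$-submodules of $\Lambda^4$ yields two independent equations: the $\Lambda^4_1$ component forces the scalar torsion $\tau_0$ to be constant, while the $\Lambda^4_7$ component rules out any $\Lambda^2_7$ part of $d\theta_\varphi$, so $d\theta_\varphi \in \Lambda^2_{14}$. These are essentially algebraic identities after differentiation and repeatedly invoking the irreducible decomposition.

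Item (4) is an immediate application of Proposition \ref{p:reducedGRS}, since item (1) produces a compact non-gradient GRS, which promotes to a gradient GRS via the $\lambda$-functional. For item (5), subtracting the soliton equations in items (1) and (4) gives $\N(\theta_\varphi^\sharp - df) = \N V = 0$ directly. To show $L_V \varphi = 0$, I would argue as in Theorem \ref{t:ahstructure}(3): $V$ is Killing and preserves $H_\varphi$ by Lemma \ref{l:reducedBC}(1), and since $\N V = 0$ and $\N \varphi = 0$ the parallel flow of $V$ transports $\varphi$ to itself. The main obstacle in the whole argument will be the explicit curvature identity underpinning item (1); the algebraic $G_2$ computations required to reduce $\bigl(\Rc^{\N} + \N \theta_\varphi\bigr)$ to the vanishing $G_2$-traced curvature plus a multiple of $dH_\varphi$ are delicate and use representation-theoretic identities that have no analogue outside the exceptional dimensions seven and eight.
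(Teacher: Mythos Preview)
Your proposal is correct and aligns with the paper's approach for items (1), (2), (4), and (5): the paper simply cites these to \cite{ivanov2023riemannianG2} (with (4) also referencing Proposition~\ref{p:reducedGRS}), and your sketch of the underlying curvature identity for (1) and the $\lambda$-functional promotion for (4) describes exactly the content of those cited results.

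The genuine difference is in item (3). You propose to extract $d\theta_\varphi \in \Lambda^2_{14}$ by computing $dH_\varphi$ from (\ref{f:G2torsion}) and projecting $dH_\varphi = 0$ onto the $\Lambda^4_7$ summand. The paper instead gives a one-line argument using only the skew-torsion condition $\tau_2 = 0$, not $dH_\varphi = 0$: integrability gives $d(\star_\varphi \varphi) = \theta_\varphi \wedge \star_\varphi \varphi$, and then
\[
d\theta_\varphi \wedge \star_\varphi \varphi = d(\theta_\varphi \wedge \star_\varphi \varphi) + \theta_\varphi \wedge d(\star_\varphi \varphi) = dd(\star_\varphi \varphi) + \theta_\varphi \wedge \theta_\varphi \wedge \star_\varphi \varphi = 0,
\]
which is equivalent to $d\theta_\varphi \in \Lambda^2_{14}$. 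This is both shorter and strictly stronger: it shows the conclusion holds for \emph{any} skew-torsion $G_2$ structure, with no strong torsion hypothesis needed. Your route would recover the same statement but only under the stronger assumption and after a longer irreducible-component computation.
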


\begin{proof} Items (1) and (2) are (\cite{ivanov2023riemannianG2} Theorem 7.1).  Items (4) and (5) are contained in (\cite{ivanov2023riemannianG2} Theorem 7.3) which is stated as an equivalence although case (a) of that theorem holds a priori as discussed above.  Note also item (4) is explained in Proposition \ref{p:reducedGRS} above.  Item (3) is contained in \cite{karigiannis2005deformations}, but we include the elementary derivation here.  First, the condition $d \theta_{\varphi} \in \Lambda^2_{14}$ is equivalent to $d \theta_{\varphi} \wedge (\star_{\varphi} \varphi) = 0$.  The integrability of $\varphi$ implies
\begin{align} \label{f:LeeformG2}
d (\star_{\varphi} \varphi) = \theta_{\varphi} \wedge (\star_{\varphi} \varphi).
\end{align}
It follows that
\begin{align*}
    d \theta_{\varphi} \wedge (\star_{\varphi} \varphi) = d \left( \theta_{\varphi} \wedge \star_{\varphi} \varphi \right) + \theta_{\varphi} \wedge d (\star_{\varphi} \varphi) = d d (\star_{\varphi} \varphi) + \theta_{\varphi} \wedge \theta_{\varphi} \wedge (\star_{\varphi} \varphi) = 0,
\end{align*}
as claimed.
\end{proof}

We note that (\cite{fino2025some} Corollary 3.3) shows that if $\tau_3$ vanishes then the strong torsion $G_2$ structure is torsion-free.  Furthermore, (\cite{fino2025some} Corollary 3.4) shows by a local argument that the vanishing of $\tau_0$ and $\tau_1$ implies the structure is torsion-free.  Also in (\cite{ivanov2023riemannianG2} Theorem 7.3) it was shown that for a compact strong torsion $G_2$ structure if $\tau_0$ and the canonical vector field $V$ both vanish, the structure is torsion free.  We next show a general integral identity underlying that proof that yields an interesting general relationship between $\tau_0$ and the $\FF$-functional.

\begin{prop} \label{p:G2rigidity} (cf. \cite{ivanov2023riemannianG2} Theorem 7.3) Let $(M, \varphi)$ be a compact strong torsion $G_2$ manifold.  If $V = 0$ then
\begin{align*}
    \int_M e^{-f} \brs{H_{\varphi}}^2 dV_{g_{\varphi}} =&\ \tfrac{49}{36}\tau_0^2 \int_M e^{-f} dV_{g_{\varphi}}.
\end{align*}
In particular $\tau_0 = 0$ if and only if $H_{\varphi}$ is zero and $\varphi$ defines a parallel $G_2$-structure.
\begin{proof} Plugging $V = 0$ into (\ref{f:G2torsion}) yields
\begin{align*}
    H_{\varphi} =&\ - \star_{\varphi} d \varphi + \star_{\varphi} (df \wedge \varphi) + \tfrac{7}{6} \tau_0 \varphi = - \star_{\varphi} e^f d (e^{-f} \varphi) + \tfrac{7}{6} \tau_0 \varphi.
\end{align*}
Also using (\ref{f:G2torsion}) and noting that $\IP{\star_{\varphi}(\theta \wedge \varphi), \varphi} = 0$ and $\IP{\star_{\varphi} d \varphi, \varphi} = 7 \tau_0$, it follows that $\IP{H_{\varphi}, \varphi} = \tfrac{7}{6} \tau_0$.  We thus obtain
\begin{align*}
    \int_M e^{-f} \brs{H_{\varphi}}^2 dV_{g_{\varphi}} =&\ \int_M e^{-f} H_{\varphi} \wedge \star H_{\varphi}\\
    =&\ \int_M H_{\varphi} \wedge \left( d (e^{-f} \varphi) + \tfrac{7}{6} \tau_0 \star \varphi \right)\\
    =&\ \int_M \left( - d H_{\varphi} \wedge e^{-f} \varphi + \tfrac{7}{6} \tau_0 \IP{H_{\varphi}, \varphi} dV_{g_{\varphi}} \right)\\
    =&\ \tfrac{49}{36} \tau_0^2 \int_M e^{-f} dV_{g_{\varphi}},
\end{align*}
as claimed.
\end{proof}
\end{prop}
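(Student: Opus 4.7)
The plan is to establish a weighted integral identity by pairing $H_\varphi$ against itself against the measure $e^{-f} dV_{g_\varphi}$ and exploiting the strong-torsion hypothesis $dH_\varphi = 0$ to kill an exact piece, leaving only a $\tau_0^2$ contribution. First, I would substitute $V = 0$, which forces $\theta_\varphi = df$, into (\ref{f:G2torsion}). Combined with the identity $\IP{d\varphi, \star_\varphi\varphi} = 7\tau_0$ obtained by pairing the irreducible decomposition of $d\varphi$ against the generator of $\Lambda^4_1$, this rewrites the torsion conformally as
\begin{align*}
H_\varphi = -\star_\varphi\bigl(e^f\, d(e^{-f}\varphi)\bigr) + \tfrac{7}{6}\tau_0\,\varphi.
\end{align*}

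Second, I would apply $\star_\varphi$ to both sides (using $\star_\varphi^2 = \Id$ on $3$-forms in dimension seven) and rewrite $\int_M e^{-f}\brs{H_\varphi}^2 dV_{g_\varphi}$ as $\int_M H_\varphi \wedge e^{-f}\star_\varphi H_\varphi$. This splits into two pieces: a ``potential'' term proportional to $-\int_M H_\varphi \wedge d(e^{-f}\varphi)$, and a ``trace'' term $\tfrac{7}{6}\tau_0 \int_M e^{-f}\, H_\varphi \wedge \star_\varphi\varphi$. For the first piece, Leibniz gives $d(H_\varphi \wedge e^{-f}\varphi) = dH_\varphi \wedge e^{-f}\varphi - H_\varphi \wedge d(e^{-f}\varphi)$, so Stokes on the compact $M$ together with $dH_\varphi = 0$ (strong torsion) collapses this term entirely.

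Third, I would evaluate the remaining term by computing the pointwise inner product $\IP{H_\varphi, \varphi}$. Using the decomposition (\ref{f:G2torsion}) and the elementary identities $\IP{\star_\varphi(\theta_\varphi \wedge \varphi), \varphi} = 0$, $\IP{\star_\varphi d\varphi, \varphi} = 7\tau_0$, and $|\varphi|^2 = 7$, a short calculation yields $\IP{H_\varphi,\varphi} = \tfrac{7}{6}\tau_0$. Since $\tau_0$ is constant by Theorem \ref{t:g2structure}(2), it factors out of the integral, delivering the stated identity $\tfrac{49}{36}\tau_0^2 \int_M e^{-f}\, dV_{g_\varphi}$.

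The ``in particular'' clause is immediate from the identity: if $\tau_0 = 0$ the right-hand side vanishes, forcing $H_\varphi \equiv 0$ and so $\N$ coincides with the Levi-Civita connection, rendering $\varphi$ parallel; conversely, $\varphi$ parallel implies $d\varphi = 0$ and hence $\tau_0 = 0$ via its defining decomposition. I do not anticipate any genuine obstacle: the only bookkeeping requiring care is the coefficient $\tfrac{7}{6}$ in $\IP{H_\varphi,\varphi}$, which is a routine $G_2$-representation computation rather than a conceptual difficulty; the conceptual content of the argument is simply that strong torsion $+$ $V=0$ reduces the $L^2$ norm of $H_\varphi$ to its scalar $\Lambda^3_1$ component.
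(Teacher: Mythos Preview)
Your proposal is correct and follows essentially the same route as the paper: rewrite $H_\varphi$ using $\theta_\varphi = df$ into the form $-\star_\varphi\bigl(e^f d(e^{-f}\varphi)\bigr) + \tfrac{7}{6}\tau_0\varphi$, split $\int_M e^{-f}\brs{H_\varphi}^2$ accordingly, kill the exact piece by Stokes and $dH_\varphi = 0$, and evaluate the remaining $\Lambda^3_1$ trace via $\IP{H_\varphi,\varphi} = \tfrac{7}{6}\tau_0$. Your explicit invocation of the constancy of $\tau_0$ (Theorem \ref{t:g2structure}(2)) to pull it outside the integral is a point the paper leaves implicit.
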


\begin{rmk} A general feature of generalized Ricci solitons is that the quantity $\gD f - \brs{\N f}^2 + \tfrac{1}{6} \brs{H}^2$ is constant, and this constant is given by the Perelman quantity $\gl$ (cf. \cite{GRFbook} Proposition 4.33).  Proposition \ref{p:G2rigidity} shows that this constant is proportional to $\tau_0^2$.
\end{rmk}

\subsection{Dimension reduction}

\begin{lemma} \label{l:g2algebraic} Given $(M, \varphi)$ a $G_2$ manifold and a nonvanishing vector field $V$, then
\begin{enumerate}
    \item The following equations uniquely define an $SU(3)$ structure $(\gw, \Omega^+)$ on $V^{\perp}$:
    \begin{align*}
            \varphi =&\ \mu \wedge \omega + \Omega^+, \qquad 
    \star_{\varphi} \varphi = \tfrac{1}{2} \gw^2 - \mu \wedge \Omega^-.
    \end{align*}
    \item $g_{\varphi} = \mu \otimes \mu + g_{\gw}$.
\end{enumerate}

\begin{proof} Using the pointwise form of a $G_2$ structure in the notation of \ref{f:G2pointwise}, after identifying $\mu$ pointwise with $e_{7}$, it follows that $\varphi$ takes the indicated form pointwise in terms of $\omega_{0}$ and $\Omega^{+}_{0}$ of \ref{f:SU3Pointwise}. The relationship for the metric follows given $\varphi_{0}$ defines the seven-dimensional Euclidean metric, which is equivalent to the six-dimensional Euclidean metric defined by $(\omega_{0}, \Omega^{+})$ supplemented by the $e_{7}$ direction.
\end{proof}
\end{lemma}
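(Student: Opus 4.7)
The plan is to reduce everything to pointwise linear algebra in a $G_2$-adapted orthonormal frame, leveraging the explicit model forms (\ref{f:G2pointwise}) and (\ref{f:SU3Pointwise}). First I would normalize so that $\brs{V}_{g_{\varphi}} = 1$; the statement is invariant under simultaneously rescaling $V$ and $\mu$, so this is harmless. Fix $p \in M$ and choose an oriented orthonormal frame $\{e_1, \dots, e_7\}$ of $T_pM$ with $e_7 = V(p)$, so that $\{e_1, \dots, e_6\}$ spans $V_p^{\perp}$. Since $G_2$ acts transitively on the unit sphere of $T_pM$, I may further adjust the frame within the stabilizer of $e_7$ so that $\varphi(p)$ equals the model form $\varphi_0$ of (\ref{f:G2pointwise}).

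With this frame in place, $\mu(p) = e^7$, and the explicit formula for $\varphi_0$ rearranges as
\[
\varphi(p) = e^7 \wedge (e^{12} + e^{34} + e^{56}) + (e^{135} - e^{146} - e^{236} - e^{245}) = \mu \wedge \omega_0 + \Omega_0^+,
\]
which defines the horizontal forms $(\omega, \Omega^+)$ on $V_p^{\perp}$ pointwise as the model $SU(3)$ forms of (\ref{f:SU3Pointwise}). A direct computation of the Hodge star of $\varphi_0$ on $\R^7$ in the same frame then yields $\star_{\varphi_0}\varphi_0 = \tfrac{1}{2}\omega_0^2 - e^7 \wedge \Omega_0^-$, proving the second identity. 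Assertion (2) is read off in this frame as well: the standard Euclidean metric on $\R^7$ algebraically determined by $\varphi_0$ splits as $e^7 \otimes e^7$ plus the six-dimensional Euclidean metric determined by $(\omega_0, \Omega_0^+)$, giving $g_{\varphi}(p) = \mu \otimes \mu + g_{\omega}(p)$.

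The main step requiring justification is that the forms $(\omega, \Omega^+)$ so extracted do not depend on the choice of adapted frame, which is what I expect to be the one nontrivial point. Any two oriented orthonormal frames with $e_7 = V(p)$ both putting $\varphi(p)$ into the model form differ by an element of $G_2$ fixing $e_7$, and this stabilizer is precisely the copy of $SU(3)$ acting on $V_p^{\perp}$, which by construction preserves the pair $(\omega_0, \Omega_0^+)$. Hence the assignment $p \mapsto (\omega(p), \Omega^+(p))$ is unambiguous, and smoothness is inherited from its algebraic construction in terms of the smooth data $(\varphi, V)$. The only remaining obstacle is bookkeeping: tracking the sign conventions for $\Omega^-$ and the orientation induced on $V^{\perp}$ so that the Hodge star calculation produces the stated formula with the correct signs.
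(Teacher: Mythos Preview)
Your proposal is correct and follows essentially the same route as the paper: both arguments reduce to pointwise linear algebra in a $G_2$-adapted orthonormal frame with $e_7 = V$, then read off the $SU(3)$ forms and the metric splitting from the explicit model expressions (\ref{f:G2pointwise}) and (\ref{f:SU3Pointwise}). Your version is more thorough---you explicitly verify the Hodge dual formula, justify well-definedness via the fact that the stabilizer of $e_7$ in $G_2$ is $SU(3)$, and flag the normalization $\brs{V}=1$---whereas the paper's proof is a terse two-sentence sketch that takes these points as understood.
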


We now prove the main theorem of this section, which extends (\cite{fino2025some} Theorem 6.1) (cf. also \cite{apostolov2004kahler} for $S^1$ reduction of $G_2$ holonomy metrics): 

\begin{thm} \label{t:G2reduction} Let $(M^7, \varphi)$ be a compact strong torsion $G_2$ structure.  Assume $V$ as in Theorem \ref{t:g2structure} is nonvanishing and let $(\gw, \Omega^+)$ denote the data guaranteed by Lemma \ref{l:g2algebraic}.  Then
\begin{enumerate}
    \item $(\gw, \Omega^+)$ is a skew-torsion $SU(3)$ structure.
    \item $H_{\varphi} = CS(\mu) + H_{\gw}$, where $H_{\gw} = d^c \gw + N_{\gw}$
    \item $(g_{\gw}, d \theta_{\varphi}, H_{\gw}, f)$ is a string generalized Ricci soliton.
    \item $d \theta_{\varphi} \in \Lambda^{1,1}_0$,
    \item The intrinsic torsion components of $(\gw, \Omega^+)$ satisfy
    \begin{gather*}
        \gs_0 = \tfrac{1}{2},\quad \gs_2 = 0, \quad 
        \nu_1 = \tfrac{1}{2} df,  \quad \nu_3 = \tfrac{1}{8}\tau_{0}\Omega^{-}  + \tfrac{1}{4}df\wedge\omega - i_V (\star_{\varphi} \tau_3),\\
        \pi_0 = \tfrac{7}{12} \tau_0, \quad \pi_1 = df, \quad \pi_2 = 0,\\
        e^f d (e^{-f} \Omega^-) = \tfrac{1}{2} \gw^2, \quad e^f d (e^{-f} \Omega^+) = \tfrac{7}{12} \tau_0 \gw^2.
    \end{gather*}
    \item $\theta_{\gw} = df$.
\end{enumerate}
\begin{proof} By Lemma \ref{l:reducedBC}(5) there is a connection $\hat{\N}$ with skew-symmetric torsion on the transverse distribution preserving $\gw$ and $\Omega^{\pm}$, hence item (1) follows.  From (\cite{friedrich2002parallel} Theorem 10.1), the torsion of a skew-symmetric connection is uniquely determined by (\ref{eq:AHHform}), hence items (2) and (3) follow (i.e. $\hat{H} = H_{\gw}$).  

Next, by Theorem \ref{t:g2structure}(3) we have $d \mu = d \theta_{\varphi} \in \Lambda_{14}^2$ hence
\begin{align*}
    0 = d \mu \wedge \star_{\varphi} \varphi = \tfrac{1}{2} d \mu \wedge \omega^2 - \mu \wedge d \mu \wedge \Omega^-.
\end{align*}
Taking the interior product with $V$, it follows that $d \mu \wedge \Omega^- = 0$, whence $d \mu \in \Lambda_8^2 = \Lambda^{1,1}$ (cf. Lemma 2.6 \cite{foscolo2017}).  Returning to the above equation then yields $d \mu \wedge \gw^2  = 0$, hence $\tr_{\gw} d \mu = 0$ and item (4) follows.

Turning to item (5), first note that for an $SU(3)$ structure the existence of a compatible connection with skew-symmetric torsion is equivalent to the Nijenhuis tensor being skew-symmetric, which in turn is equivalent to $\pi_2 = \gs_2 = 0$.   Next using $\star_{\varphi} \varphi = \tfrac{1}{2} \gw^2 - \mu \wedge \Omega^-$ we differentiate and obtain
\begin{align} \label{f:G2red10}
    d(\star_{\varphi} \varphi) = \gw \wedge d \gw - d \mu \wedge \Omega^- + \mu \wedge d \Omega^- = \gw \wedge d \gw + \mu \wedge d \Omega^-.
\end{align}
Since $\tau_2 = 0$ we also have (cf. \cite{fino2025some}  Proposition 5.4)
\begin{align*}
    \star_{\gw} d \Omega^- \wedge \Omega^- + 2 \gw \wedge d^c \gw - 2 d \mu \wedge \Omega^+ =&\ 0.
\end{align*}
Applying $J_{\gw}$ thus yields (cf. Lemma 2.9v \cite{foscolo2017})
\begin{align*}
    0 =&\ \star_{\gw} d \Omega^- \wedge \Omega^+ - 2 \gw \wedge d \gw + 2 d \mu \wedge \Omega^- = \star_{\gw} d \Omega^- \wedge \Omega^+ - 2 \gw \wedge d \gw.
\end{align*}
Using this in (\ref{f:G2red10}) then yields
\begin{align*}
    d(\star_{\varphi} \varphi) = \tfrac{1}{2} \star_{\gw} d \Omega^- \wedge \Omega^+ + \mu \wedge d \Omega^-.
\end{align*}
Expressing $d \Omega^- = \gs_0 \gw^2 + \pi_1 \wedge \Omega^-$ then yields
\begin{align*}
    d (\star_{\varphi} \varphi) =&\ \tfrac{1}{2} \star_{\gw} \left( \gs_0 \gw^2 + \pi_1 \wedge \Omega^- \right) \wedge \Omega^+ + \mu \wedge \left( \gs_0 \gw^2 + \pi_1 \wedge \Omega^- \right)\\
    =&\ \tfrac{1}{2} \gs_0 \gw^2 \wedge \Omega^+ + \tfrac{1}{2}  \star_{\gw} (\pi_1 \wedge \Omega^-) \wedge \Omega^+ + \mu \wedge \left( \gs_0 \gw^2 + \pi_1 \wedge \Omega^- \right)\\
    =&\ \tfrac{1}{2} \pi_1 \wedge \gw^2 + \mu \wedge \left( \gs_0 \gw^2 + \pi_1 \wedge \Omega^- \right)\\
    =&\ (2 \gs_0 \mu + \pi_1) \wedge \star_{\varphi} \varphi.
\end{align*}
On the other hand by the integrability of $\varphi$ we also have (cf. \ref{f:LeeformG2})
\begin{align*}
    d (\star_{\varphi} \varphi) = \theta_{\varphi} \wedge (\star_{\varphi} \varphi).
\end{align*}
The two equations above thus yield
\begin{align*}
    (2 \gs_0 \mu + \pi_1 - df) \wedge \star_{\varphi} \varphi = \mu \wedge (\star_{\varphi} \varphi),
\end{align*}
and comparing the horizontal and vertical components of the two sides thus yields
\begin{align*}
    \gs_0 = \tfrac{1}{2}, \quad \pi_1 = df.
\end{align*}
Turning again to (\cite{fino2025some} Proposition 5.4) and noting $\theta_{\varphi} = 4 \tau_1$, we have
\begin{align*}
    \theta_{\varphi} - df = \mu + \tfrac{2}{3} (\pi_1 + \nu_1) - df = \mu + \tfrac{1}{3} \left(2 \nu_1 - df \right).
\end{align*}
Again comparing vertical and horizontal pieces yields $\nu_1 = \tfrac{1}{2} df$.  Returning to (\ref{f:G2red10}) we obtain
\begin{align*}
    \gw \wedge d \gw + \mu \wedge d \Omega^- =&\ (\mu + df) \wedge \star_{\varphi} \varphi\\
    =&\ (\mu + df) \wedge (\tfrac{1}{2} \gw^2 - \mu \wedge \Omega^-)\\
    =&\ \mu \wedge \left( \tfrac{1}{2} \gw^2 + df \wedge \Omega^- \right) + \tfrac{1}{2} df \wedge \gw^2.
\end{align*}
The claimed equation for $d \Omega^-$ follows.

To derive the equations for $\pi_0$ and $d \Omega^+$ we first observe that equation (\ref{f:G2torsion}) implies
\begin{align*}
    \star H_{\varphi} = \tfrac{1}{6} \tau_0 \star_{\varphi} \varphi + \tfrac{1}{4} \theta_{\varphi} \wedge \varphi - \star \tau_3.
\end{align*}
On the other hand we also have
\begin{align*}
    d \varphi = \tau_0 \star_{\varphi} \varphi + \tfrac{3}{4} \theta_{\varphi} \wedge \varphi + \star_{\varphi} \tau_3.
\end{align*}
Hence we obtain
\begin{align*}
    \star_{\varphi} H_{\varphi} + d \varphi = \tfrac{7}{6} \tau_0 \star_{\varphi} \varphi + \theta_{\varphi} \wedge \varphi.
\end{align*}
Since $e^{-f} H_{\varphi}$ is co-closed by Proposition \ref{p:reducedGRS}, we thus obtain
\begin{align*}
    d (e^{-f} d \varphi) =&\ d \left( e^{-f} \left( \star_{\varphi} H_{\varphi} + d \varphi \right) \right)\\
    =&\ d \left( e^{-f} \left( \tfrac{7}{6} \tau_0 \star_{\varphi} \varphi + \theta_{\varphi} \wedge \varphi \right) \right)\\
    =&\ e^{-f} \left( - df \wedge \left( \tfrac{7}{6} \tau_0 \star_{\varphi} \varphi + \theta_{\varphi} \wedge \varphi \right) + \tfrac{7}{6} \tau_0 d (\star_{\varphi} \varphi) + d \theta_{\varphi} \wedge \varphi - \theta_{\varphi} \wedge d \varphi \right).
\end{align*}
First note that the left hand side above can be expressed as $- e^{-f} df \wedge d \varphi$, and using this yields
\begin{align*}
    0 =&\ d f \wedge d (\mu \wedge \gw + \Omega^+) - df \wedge \tfrac{7}{6} \tau_0 ( \tfrac{1}{2} \gw^2 - \mu \wedge \Omega^-) - df \wedge \theta_{\varphi} \wedge \varphi\\
    &\ + \tfrac{7}{6} \tau_0 \theta_{\varphi} \wedge (\tfrac{1}{2} \gw^2 - \mu \wedge \Omega^-) + d \theta_{\varphi} \wedge (\mu \wedge \gw + \Omega^+) - \theta_{\varphi} \wedge d (\mu \wedge \omega + \Omega^+).
\end{align*}
The first, fifth, and sixth terms above combine to yield
\begin{align*}
    d \theta_{\varphi} \wedge (\mu \wedge \gw + \Omega^+) - \mu \wedge d(\mu \wedge \gw + \Omega^+) = - \mu \wedge d \Omega^+.
\end{align*}
The second and fourth terms above simplify to
\begin{align*}
    \tfrac{7}{6} \tau_0 \mu \wedge (\tfrac{1}{2} \gw^2 - \mu \wedge \Omega^-) = \tfrac{7}{12} \tau_0 \mu \wedge \gw^2.
\end{align*}
Finally we simplify the third term
\begin{align*}
    - df \wedge \theta_{\varphi} \wedge \varphi = \mu \wedge df \wedge (\mu \wedge \gw + \Omega^+) = \mu \wedge (df \wedge \Omega^+).
\end{align*}
Combining the above equations yields
\begin{align*}
    0 = \mu \wedge \left( \tfrac{7}{12} \tau_0 \gw^2 - e^{f} d (e^{-f} \Omega^+) \right),
\end{align*}
yielding the claimed equation for $d \Omega^+$, whence $\pi_0$ as well.

To compute $\nu_{3}$, we will compute $d \varphi$ two ways.  First using the Chiossi-Salamon decomposition \ref{f:SU(3)reps}, Lemma \ref{l:g2algebraic}, and our prior computations, we have
\begin{align*}
d\varphi =&\ d\mu \wedge \omega - \mu \wedge d\omega + d\Omega^{+}\\
=&\ d\mu \wedge \omega - \mu \wedge (-\tfrac{3}{4} \Omega^{+} + \tfrac{7}{8}\tau_{0}\Omega^{-} + \tfrac{1}{2} df\wedge\omega + \nu_{3}) + \tfrac{7}{12}\tau_{0}\omega\wedge\omega + df\wedge\Omega^{+}.
\end{align*}
On the other hand we also have, again using Lemma \ref{l:g2algebraic},
\begin{align*}
    d \varphi =&\ \tau_0 \star_{\varphi} \varphi + \tfrac{3}{4} \theta_{\varphi} \wedge \varphi + \star_{\varphi} \tau_3\\
    =&\ \tau_{0}(\tfrac{1}{2}\omega\wedge\omega - \mu \wedge \Omega^{-}) + \tfrac{3}{4} \theta_{\varphi} \wedge (\mu \wedge \omega + \Omega^{+}) + \star_{\varphi} \tau_3\\
    =&\ \tau_{0}(\tfrac{1}{2}\omega\wedge\omega - \mu \wedge \Omega^{-}) + \tfrac{3}{4} (\mu + df) \wedge (\mu \wedge \omega + \Omega^{+}) + \star_{\varphi} \tau_3\\
    =&\ \tau_{0}(\tfrac{1}{2}\omega\wedge\omega - \mu \wedge \Omega^{-}) + \tfrac{3}{4} \mu \wedge ( \Omega^+ - df \wedge \omega) + \tfrac{3}{4} df \wedge \Omega^+ + \star_{\varphi} \tau_3.
\end{align*}
Comparing these two expressions and taking the interior product with $V$ yields the claimed formula.

Finally, item (6) follows from the general fact that $\theta_{\gw} = 2 \nu_1$.
\end{proof}
\end{thm}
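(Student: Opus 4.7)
The plan is to dispose of items (1)--(4) via the general dimension reduction machinery of Section \ref{s:DR}, reserving the bulk of the effort for the $SU(3)$ torsion identifications in item (5), from which item (6) falls out immediately.

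For items (1) and (2), I would apply Lemma \ref{l:reducedBC}: part (5) applied to the $V$-invariant, $\N$-parallel forms $\gw$ and $\Omega^{\pm}$ of Lemma \ref{l:g2algebraic} (invariance being guaranteed by Theorem \ref{t:g2structure}(5)) gives $\hat{\N}\gw = \hat{\N}\Omega^{\pm} = 0$, and part (3) together with uniqueness of the compatible skew-torsion connection from \cite{friedrich2002parallel} identifies the induced $\hat{H}$ with $H_{\gw}$. Item (3) is then Proposition \ref{p:reducedGRS}. For item (4), I would use Theorem \ref{t:g2structure}(3), which places $d\mu = d\theta_{\varphi} \in \Lambda_{14}^2$, i.e.\ $d\mu \wedge \star_{\varphi}\varphi = 0$; substituting $\star_{\varphi}\varphi = \tfrac{1}{2}\gw^2 - \mu \wedge \Omega^-$ and taking the interior product with $V$ forces $d\mu \wedge \Omega^- = 0$, placing $d\mu$ in $\Lambda^{1,1}$, and the remaining relation $d\mu \wedge \gw^2 = 0$ yields primitivity.

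The core of the proof is item (5). The key mechanism is the integrability-type identity $d(\star_{\varphi}\varphi) = \theta_{\varphi} \wedge \star_{\varphi}\varphi$, which is equivalent to $\tau_2 = 0$ and hence built into a skew-torsion $G_2$ structure. The first step is to differentiate the decomposition $\star_{\varphi}\varphi = \tfrac{1}{2}\gw^2 - \mu \wedge \Omega^-$, use the Chiossi--Salamon expansion $d\Omega^- = \gs_0 \gw^2 + \pi_1 \wedge \Omega^-$ (with $\pi_2 = \gs_2 = 0$ already from item (1)), and invoke the $SU(3)$ reformulation of $\tau_2 = 0$ from \cite{fino2025some} Proposition 5.4 to trade $\gw \wedge d\gw$ for $\star_{\gw} d\Omega^- \wedge \Omega^+$. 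Comparing the resulting expression with $\theta_{\varphi} \wedge \star_{\varphi}\varphi = (\mu + df) \wedge \star_{\varphi}\varphi$ and splitting into horizontal and vertical components reads off $\gs_0 = \tfrac{1}{2}$ and $\pi_1 = df$. The identity $\theta_{\varphi} = 4\tau_1$ together with the decomposition of $\tau_1$ into its vertical part plus $\nu_1$ and $\pi_1$ contributions (again via \cite{fino2025some} Proposition 5.4) yields $\nu_1 = \tfrac{1}{2} df$, and item (6) is then immediate from $\theta_{\gw} = 2\nu_1$.

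To extract $\pi_0$ and the formula for $d(e^{-f}\Omega^+)$, I would combine (\ref{f:G2torsion}) with the Fernandez--Gray decomposition of $d\varphi$ to obtain the compact identity $\star_{\varphi} H_{\varphi} + d\varphi = \tfrac{7}{6}\tau_0 \star_{\varphi}\varphi + \theta_{\varphi} \wedge \varphi$, then apply $d$ to the $e^{-f}$-weighted version and use that $e^{-f} H_{\varphi}$ is co-closed (Proposition \ref{p:reducedGRS}) to arrive at an equation whose horizontal and vertical components produce $e^f d(e^{-f} \Omega^+) = \tfrac{7}{12}\tau_0 \gw^2$, giving $\pi_0 = \tfrac{7}{12}\tau_0$. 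Finally, I would compute $d\varphi$ in two ways—directly from $\varphi = \mu \wedge \gw + \Omega^+$ using the formulas for $d\mu, d\gw, d\Omega^+$ already derived, and from the Fernandez--Gray decomposition $d\varphi = \tau_0 \star_{\varphi}\varphi + \tfrac{3}{4}\theta_{\varphi} \wedge \varphi + \star_{\varphi}\tau_3$—and take the interior product with $V$ to isolate $\nu_3$. The principal obstacle is the bookkeeping required to translate $G_2$ torsion data $(\tau_0, \tau_1, \tau_3)$ into the Chiossi--Salamon $SU(3)$ invariants $(\gs_i, \nu_i, \pi_i)$ across the reduction; the conceptual content is that $\tau_2 = 0$, together with the weighted co-closedness from Proposition \ref{p:reducedGRS}, forces the transverse $SU(3)$ structure to be half-flat up to the conformal factor $e^{-f}$, with transverse Lee form exactly $df$.
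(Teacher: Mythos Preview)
Your proposal is correct and follows essentially the same route as the paper's proof: the same use of Lemma \ref{l:reducedBC}(5) and Friedrich--Ivanov uniqueness for items (1)--(2), Proposition \ref{p:reducedGRS} for item (3), the $\Lambda^2_{14}$ condition wedged with $\star_{\varphi}\varphi$ for item (4), and for item (5) the same combination of differentiating $\star_{\varphi}\varphi$, invoking \cite{fino2025some} Proposition 5.4, the identity $\star_{\varphi}H_{\varphi} + d\varphi = \tfrac{7}{6}\tau_0\star_{\varphi}\varphi + \theta_{\varphi}\wedge\varphi$ together with co-closedness of $e^{-f}H_{\varphi}$, and the two-way computation of $d\varphi$ for $\nu_3$. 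The order and emphasis match the paper almost step for step.
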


Next we show that the symmetry reduction of Theorem \ref{t:G2reduction} is a local equivalence, building on the discussion of (\cite{fino2025some} \S 6).  In fact, we show that the Bianchi identity essentially encodes the entire structure.

\begin{thm} \label{t:G2reductionconverse} Let $(M, \gw, \Omega^+)$ be a skew-torsion $SU(3)$ structure such that $\gs_0 = \tfrac{1}{2}$, $\pi_0$ is constant, and $\theta_{\gw} = df$.  Further assume $M$ is endowed with $F \in \Lambda^2(M)$, $d F = 0, [F] \in H^2(M, \mathbb Z)$.  Finally assume one of the following equivalent conditions:
\begin{enumerate}
    \item $d H_{\gw} + F \wedge F = 0$.
    \item $\gD_{\gw} \gw = (1 + 4 \pi_{0}^2)\omega + \star_{\omega}((2\pi_{0}df + J(df)) \wedge \Omega^{+}) - \tfrac{1}{2} d\star_{\omega}(df\wedge\omega\wedge\omega) + d^{\star}(df\wedge\omega) + \star_{\omega}dJ(df\wedge\omega) + \star_{\omega}(F\wedge F)$.
\end{enumerate}
Let $\pi: \bar{M} \to M$ denote the $U(1)$ bundle associated to $[F]$, endowed with a principal connection $\mu$ such that $d \mu = F$.  Then $\varphi = \mu \wedge \gw + \Omega^+$ defines a strong torsion $G_2$ structure on $\bar{M}$.
\begin{proof} First, assuming that items (1) and (2) are equivalent, we follow the discussion of Remark \ref{r:reversibility}.  In particular, note the metric $g_{\varphi}$ is in the ansatz of Lemma \ref{l:reducedBC}(2).  Moreover, if we define $H = CS(\mu) + H_{\gw}$, it follows that the skew-torsion connection defined by $H$ preserves $g_{\varphi}$ and $\varphi$, hence $\varphi$ is a skew-torsion $G_2 $ structure, and $H = H_{\varphi}$.  Item (1) or equivalently (2) then shows that $d H_{\varphi} = 0$, hence $\varphi$ is a strong torsion $G_2$ structure.

To show the equivalence of (1) and (2) we first observe the following relationship between $J(d\omega)$ and $\star_{\omega}d\omega$ which follows from the Chiossi-Salamon decomposition, the characterizations of the $SU(3)$ representations on three-forms \cite{foscolo2017}, and the fact that $\theta_{\gw} = df$,
\begin{equation}
J(d\omega) = \star_{\omega}d\omega + J(df\wedge\omega) - \star_{\omega}(df\wedge\omega).
\end{equation}
Using this and (\ref{d:strongSU(3)T}) we obtain
\begin{equation}
H_{\omega} = -\star_{\omega}d\omega - J(df\wedge\omega) + \star_{\omega}(df\wedge\omega) + N.
\end{equation}
Hence we compute, using (\cite{fino2025some} Proposition 4.3) and that $\pi_0$ and $\gs_0$ are constant,
\begin{align*}
dH_{\omega} =&\ -d\star_{\omega}d\omega - dJ(df\wedge\omega) + d\star_{\omega}(df\wedge\omega) + dN\\
=&\ -d\star_{\omega}d\omega - dJ(df\wedge\omega) + d\star_{\omega}(df\wedge\omega) -2\Big( \tfrac{1}{4} + \pi_{0}^2\Big)\omega\wedge\omega -2 \Big(\pi_{0}df + \tfrac{1}{2}J(df)\Big)\wedge\Omega^{+}.
\end{align*}
Also we may compute, using the Chiossi-Salamon decomposition and again that $\theta_{\gw} = df$, 
\begin{align*}
d\star_{\omega}d\star_{\omega}\omega =\tfrac{1}{2} d\star_{\omega}d(\omega\wedge\omega) = d\star_{\omega}(\omega\wedge d\omega) = \tfrac{1}{2}d\star_{\omega}(df\wedge\omega\wedge\omega),
\end{align*}
which implies 
\begin{align*}
    0 =&\ -d d^* \gw - \tfrac{1}{2} d \star_{\gw} (df \wedge \gw^2).
\end{align*}
Combining this with the above we yield
\begin{align*}
dH_{\omega} =&\ -d\star_{\omega}d\omega - dJ(df\wedge\omega) + d\star_{\omega}(df\wedge\omega) -2\Big( \tfrac{1}{4} + \pi_{0}^2\Big)\omega\wedge\omega -2 \Big(\pi_{0}df + \tfrac{1}{2}J(df)\Big)\wedge\Omega^{+}\\
=&\ \star_{\gw} \left[ \gD_{\gw} \gw - \tfrac{1}{2} d \star_{\gw} (df \wedge \omega^2) \right.\\
&\ \quad \left. - \star_{\gw} \left( d J (df \wedge \gw) + d\star_{\omega}(df\wedge\omega) -2\Big( \tfrac{1}{4} + \pi_{0}^2\Big)\omega\wedge\omega -2 \Big(\pi_{0}df + \tfrac{1}{2}J(df)\Big)\wedge\Omega^{+} \right) \right]. 
\end{align*}
The equivalence of (1) and (2) follows directly from this.
\end{proof}
\end{thm}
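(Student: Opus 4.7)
The plan is to construct $\varphi$ on the $U(1)$-bundle $\bar M$, recognize it as a skew-torsion $G_2$ structure whose characteristic three-form is $CS(\mu) + \pi^* H_{\gw}$, and then interpret the closedness of this torsion via conditions (1) and (2). First I would apply Chern-Weil theory, using $[F] \in H^2(M, \Z)$, to produce the principal $U(1)$-bundle $\pi \colon \bar M \to M$ and a principal connection $\mu$ with $d\mu = \pi^* F$. On $\bar M$ I then set
\begin{align*}
    g_\varphi := \mu \otimes \mu + \pi^* g_\gw, \qquad \varphi := \mu \wedge \pi^* \gw + \pi^* \Omega^+,
\end{align*}
which pointwise realizes the ansatz of Lemma \ref{l:g2algebraic}, so $\varphi$ is a positive three-form on $\bar M$ and defines a $G_2$ structure with metric $g_\varphi$.

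Next I would lift the Bismut connection of $(g_\gw, H_\gw)$ to $\bar M$ following the reversibility construction of Remark \ref{r:reversibility}. Defining $H := CS(\mu) + \pi^* H_\gw$ and $\N := D + \tfrac{1}{2} g_\varphi^{-1} H$, the reversibility statement ensures that the induced horizontal connection agrees with the Bismut connection of $(g_\gw, H_\gw)$, which by hypothesis preserves $\gw$ and $\Omega^{\pm}$, while $\N$ additionally satisfies $\N \mu = 0$. It follows that $\N \varphi = \N(\mu \wedge \gw + \Omega^+) = 0$, so $\varphi$ is a skew-torsion $G_2$ structure, and by the uniqueness clause of (\cite{friedrich2002parallel} Theorem 4.8) the characteristic torsion is $H_\varphi = H$. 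Differentiating yields
\begin{align*}
    d H_\varphi = d(CS(\mu)) + \pi^* d H_\gw = \pi^*(F \wedge F + d H_\gw),
\end{align*}
so $\varphi$ has strong torsion if and only if condition (1) holds on $M$. This reduces the theorem to the equivalence of (1) and (2).

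The principal obstacle is this equivalence, which is a purely downstairs computation on $M$. The approach is to rewrite $H_\gw = d^c \gw + N$ in a form adapted to the hypotheses $\gs_0 = \tfrac{1}{2}$, $\pi_0$ constant, and $\theta_\gw = df$. Using the Chiossi-Salamon decomposition of $d\gw$ and the $SU(3)$ identity
\begin{align*}
    J(d\gw) = \star_\gw d\gw + J(df \wedge \gw) - \star_\gw(df \wedge \gw),
\end{align*}
which follows from the explicit characterizations in (\ref{f:SU(3)reps}) combined with $\theta_\gw = df$, one obtains
\begin{align*}
    H_\gw = -\star_\gw d\gw - J(df \wedge \gw) + \star_\gw(df \wedge \gw) + N.
\end{align*}
I would then differentiate, invoke (\cite{fino2025some} Proposition 4.3) to handle $dN$ in terms of $\gw^2$ and $\Omega^+$, and use the identity $dd^* \gw + \tfrac{1}{2} d\star_\gw(df \wedge \gw^2) = 0$, obtained by expanding $d\star_\gw d\star_\gw \gw = d\star_\gw(\gw \wedge d\gw)$ via Chiossi-Salamon. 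Adding $F \wedge F$ and taking $\star_\gw$ reorganizes the output into exactly the difference of the two sides of (2), giving the equivalence. The hard part is the careful bookkeeping of $SU(3)$-type components, but no ingredient is needed beyond those already appearing in the proof of Theorem \ref{t:G2reduction}.
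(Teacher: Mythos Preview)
Your proposal is correct and follows essentially the same route as the paper: construct $\varphi$ on the $U(1)$-bundle via the reversibility of Remark~\ref{r:reversibility}, identify $H_\varphi = CS(\mu) + \pi^* H_\gw$ so that $dH_\varphi = 0$ is equivalent to (1), and then establish (1)$\Leftrightarrow$(2) by the same chain of $SU(3)$ identities---the $J(d\gw)$ formula, the rewriting of $H_\gw$, the computation of $dN$ via \cite{fino2025some} Proposition~4.3, and the $dd^\star \gw$ identity. The paper's proof is organized identically and uses exactly these ingredients.
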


\begin{cor} (cf. \cite{fino2023twisted} pg 23) \label{c:G2splitting} In the setup of Theorem \ref{t:G2reduction}, the following are equivalent:
\begin{enumerate}
    \item $d H_{\gw} = 0$,
    \item $d \mu = d \theta_{\varphi} = 0$,
    \item $D \mu = 0$, i.e. $M$ locally splits as a Riemannian product.
\end{enumerate}
\begin{proof} For an $SU(3)$ structure $(\gw, \Omega^+)$, a differential form $\ga \in \Lambda^{1,1}_0$ satisfies
\begin{align*}
    \star_{\gw} (\ga \wedge \ga) = - \tfrac{1}{3} \brs{\ga}^2 \gw + \gb,
\end{align*}
where $\gb \in \Lambda^{1,1}_0$.  Applying this to $d \mu = d \theta_{\varphi}$ and using that $d H_{\gw} = d \mu \wedge d \mu$ the equivalence of items (1) and (2) easily follows.  On the other hand since $V$ is Killing we also have
\begin{align*}
    D \mu = L_{V} g + d \mu = d \theta_{\varphi},
\end{align*}
and the equivalence of (2) and (3) follows.
\end{proof}
\end{cor}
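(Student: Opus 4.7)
My plan is to prove the equivalences (1)$\Leftrightarrow$(2) and (2)$\Leftrightarrow$(3) separately. The first rests on the anomaly-cancellation identity coming from Theorem~\ref{t:G2reduction}(2) combined with a pointwise linear-algebra fact about primitive $(1,1)$-forms on a Hermitian threefold; the second follows from the Killing property of $V$ together with the de~Rham decomposition theorem.

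For \emph{(1)$\Leftrightarrow$(2)}, Theorem~\ref{t:G2reduction}(2) gives $H_\varphi = CS(\mu) + H_\gw$ with $\mu = \theta_\varphi - df$, so in particular $d\mu = d\theta_\varphi$. Since $\varphi$ is strong torsion we have $dH_\varphi = 0$, and using $dCS(\mu) = d\mu \wedge d\mu$ this becomes the anomaly relation
\begin{equation*}
dH_\gw + d\mu \wedge d\mu = 0.
\end{equation*}
The direction (2)$\Rightarrow$(1) is immediate. For the converse I would invoke Theorem~\ref{t:G2reduction}(4), which places $d\mu \in \Lambda^{1,1}_0$, together with the standard algebraic identity
\begin{equation*}
\star_\gw(\alpha \wedge \alpha) = -\tfrac{1}{3}\brs{\alpha}^2 \gw + \gb, \qquad \gb \in \Lambda^{1,1}_0,
\end{equation*}
valid for any primitive $(1,1)$-form $\alpha$ on a Hermitian threefold. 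Since $\gw$ and $\Lambda^{1,1}_0$ are pointwise orthogonal, $dH_\gw = 0$ forces both summands to vanish, whence $\brs{d\mu}^2 \equiv 0$ and $d\mu = 0$.

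For \emph{(2)$\Leftrightarrow$(3)}, since $\N V = 0$ and $\N$ preserves $g_\varphi$, the field $V$ is Killing and $L_V g_\varphi = 0$. Decomposing $D\mu$ into its symmetric and antisymmetric parts, the symmetric part equals $\tfrac{1}{2} L_V g_\varphi = 0$ and the antisymmetric part equals $\tfrac{1}{2} d\mu$; hence $D\mu = 0$ iff $d\mu = 0$. If $D\mu = 0$ then $V$ is a nonvanishing Levi-Civita-parallel vector field of unit length, and the de~Rham decomposition theorem supplies a local isometric splitting of $(M, g_\varphi)$ along the flow of $V$.

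The main technical point is to verify the normalization of the pointwise identity on $\Lambda^{1,1}_0$; once this is in hand, the rest of the argument is bookkeeping using the earlier results of the paper and elementary Killing-field calculus.
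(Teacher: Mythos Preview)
Your proposal is correct and follows essentially the same route as the paper: the anomaly relation $dH_\gw + d\mu \wedge d\mu = 0$ combined with the pointwise identity $\star_\gw(\alpha\wedge\alpha) = -\tfrac{1}{3}|\alpha|^2\gw + \gb$ for $\alpha\in\Lambda^{1,1}_0$ handles (1)$\Leftrightarrow$(2), and the Killing property of $V$ reduces (2)$\Leftrightarrow$(3) to the symmetric/antisymmetric decomposition of $D\mu$. Your write-up is slightly more careful with signs and factors of two than the paper's, but the ideas are identical.
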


\begin{ex} \label{ex:nonintG2} (\cite{fino2023twisted}, cf. \cite{ivanov2023riemannianG2} Ex. 7.6)
Let $G = SU(2) \times SU(2) \times U(1)$ with the following structure equations
\begin{align*}
de_{1} &= e_{23}, \hspace{10mm} de_{2}=e_{31}, \hspace{10mm} de_{3}=e_{12}, \\
de_{4} &= e_{56}, \hspace{10mm} de_{5}=e_{64}, \hspace{10mm} de_{6} = e_{45}, \hspace{10mm} de_{7} = 0.
\end{align*}
Define a $G_2$ structure $\varphi$ in this frame of the form
\begin{equation}
\varphi = e_{147} + e_{257} - e_{367} + e_{123} + e_{156} - e_{246} - e_{345}\nonumber
\end{equation}
This structure is skew-torsion and a computation yields that the Lee form is $\theta_{\varphi} = e_{7}$, which is closed.  Using Lemma \ref{l:g2algebraic}, the reduced $SU(3)$-structure is
\begin{align*}
\omega &= e_{14} - e_{25} - e_{36}, \quad 
\Omega^{+} = e_{123} + e_{156} - e_{246} - e_{345}.\nonumber
\end{align*}
The $G_2$ structure $\varphi$ is of constant type, in particular $d\varphi\wedge\varphi=7\textnormal{vol}$. Using this fact and that $df=0$, it follows from Lemma \ref{l:g2algebraic} that the non-vanishing torsion forms of the $SU(3)$ structure are $\sigma_{0}$, $\pi_{0}$, and $\nu_{3}$. Following a complex rotation of $\Omega^{\pm}$ (cf. \cite{fino2025some} \S 6) this structure yields a co-coupled half-flat strong torsion $SU(3)$ structure. Observe that this example is distinct from that in Example \ref{ex:nonintsu(3)}, indeed that example cannot come from our dimension reduction as $\gs_0$ takes the value $-2$ versus $\tfrac{1}{2}$.
\end{ex}

\begin{ex} \label{ex:nonintG2nonclosedLee} (\cite{fino2023twisted}, cf. \cite{ivanov2023riemannianG2} Ex. 7.7)
Let $G = SU(2) \times SU(2) \times U(1)$ with the same structure equations as Example \ref{ex:nonintG2}.  Consider the $G_2$ three-form defined by (\ref{f:G2pointwise}) in this frame.  A computation yields that the $G_2$ structure is strong torsion with Lee form $\theta_{\varphi} = e_{4} - e_{3}$.  In particular, we have 
\begin{equation}
d\theta_{\varphi} = e_{56} - e_{12}\nonumber.
\end{equation}
Furthermore, by Lemma \ref{l:g2algebraic} we obtain the reduced $SU(3)$ structure
\begin{align*}
\omega &= e_{16} + e_{25} - e_{37} + e_{15} - e_{26} - e_{47}, \quad 
\Omega^{+} = e_{127} - e_{146} - e_{245} + e_{567} - e_{136} - e_{235},\nonumber
\end{align*}
which is well-defined on the quotient $G / \IP{\theta_{\varphi}^{\sharp}} \cong S^3 \times \mathbb {CP}^1 \times S^1$.  
In particular it is a skew-torsion $SU(3)$ structure This defines a string generalized Ricci soliton with nontrivial $F$ which becomes co-coupled half-flat after a complex rotation of $\Omega^{\pm}$ (cf. \cite{fino2025some}).
\end{ex}

\section{Strong torsion \texorpdfstring{$\Spin(7)$}{Spin(7)} structures} \label{s:Spin7}

In this section we begin by recalling fundamental definitions regarding $\Spin(7)$ structures, their torsion, the fact that they always admit a connection with skew-symmetric torsion, and the recent result of Ivanov-Petkov \cite{ivanov2023riemannianspin7} showing that those with strong torsion are automatically generalized Ricci solitons and admit a canonical symmetry (Theorem \ref{t:spin(7)structure}).  Next in Theorem \ref{t:Spin(7)reduction} we show the transverse $G_2$ structure is skew-torsion, and a string generalized Ricci soliton.  Moreover we compute its torsion, showing that it is conformally co-closed, and balanced, with conformal factor $e^{-f}$.  We end by showing a rigidity phenomena.

\subsection{Definitions and structure theorems}

\begin{defn}\label{Spin(7)-Structure} The data $(M^8,\Psi)$ defines a \emph{Spin(7) structure} \cite{bonan1966varietes} if $\Psi\in\Lambda^{4}$ may be expressed pointwise with $\Psi_{0}$ in the following manner. \begin{gather} \label{f:spin74form} 
\begin{split}
    \Psi_{0} =&\ -e_{1238} + e_{1347} - e_{1458} - e_{1678} + e_{1257} + e_{1356} - e_{1246}\\
    &\ - e_{4567} - e_{2568} - e_{2367} - e_{2345} - e_{3468} - e_{2478} + e_{3578}.
\end{split}
\end{gather}
The subgroup of $GL(8,\mathbb R)$ preserving $\Psi$ is $\Spin(7)$.  As this group is a subgroup of $SO(8)$ the four-form $\Psi$ induces a Riemannian metric $g_{\Psi}$, an explicit formula for which appears in (\cite{karigiannis2005deformations} Theorem 4.3.5).
\end{defn}

On a manifold with $\Spin(7)$ structure, we obtain decompositions of differential forms into irreducible representations of $\Spin(7)$ as:
\begin{align*}
\Lambda^{1} &= \Lambda_{8}^{1}, \quad \Lambda^{2} = \Lambda_{7}^{2} \oplus \Lambda_{21}^{2},\\
\Lambda^{3} &= \Lambda_{8}^{3}\oplus \Lambda_{48}^{3}, \quad 
\Lambda^{4} = \Lambda_{1}^{4} \oplus \Lambda_{7}^{4} \oplus \Lambda_{27}^{4} \oplus \Lambda_{35}^{4}.
\end{align*}
These representation spaces are explicitly described via (cf. \cite{joyce2000compact}):
\begin{align*}
\Lambda_{7}^{2} &= \{\beta\in\Lambda^{2} | \star(\Psi\wedge\beta) = -3\beta\}\\
\Lambda_{21}^{2} &= \{\beta\in\Lambda^{2}|\star(\Psi\wedge\beta) = \beta\}\\
\Lambda_{8}^{3} &= \{ X\lrcorner\Psi | X\in\Gamma(TM)\}\\
\Lambda_{48}^{3} &= \{\gamma\in\Lambda^{3}|\gamma\wedge\Psi=0 \}.
\end{align*}

\begin{defn} Given a $\Spin(7)$ structure $\Psi$, from \cite{fernandez1986classification} we define torsion forms $\theta_{\Psi}$ and $\zeta_5$ via
\begin{gather*}
\begin{split}
d \Psi = \theta_{\Psi} \wedge \Psi + \zeta_5,
\end{split}
\end{gather*}
where 
\begin{equation} \label{f:spin(7)LeeForm}
\theta_{\Psi} = -\tfrac{1}{7} \star_{\Psi} (\star_{\Psi}d\Psi \wedge \Psi)    
\end{equation}
is the \emph{Lee form} of $\Psi$.  A fundamental result of Ivanov (\cite{ivanov2002geometry} Theorem 1, \cite{ivanov2004connections} Theorem 1.1) yields that $\Spin(7)$ manifolds \emph{always} admit a unique compatible connection with skew-symmetric torsion $H_{\Psi}$ given by 
\begin{align*}
    H_{\Psi} = - \star_{\Psi} d \Psi + \tfrac{7}{6} \star_{\Psi} (\theta_{\Psi} \wedge \Psi).
\end{align*} 
We say that the structure has \emph{strong torsion} if $d H_{\Psi} = 0$.
\end{defn}

\begin{prop}\label{p:strongSpin(7)Properties}
Let $(M, \Psi)$ be a strong torsion $\Spin(7)$ structure. Then
\begin{align} \label{f:spin7dilatino}
    0 =&\ \tfrac{7}{6}d^{\star}\theta_{\Psi} + \tfrac{7}{6}|\theta_{\Psi}|^2 - |\zeta_{5}|^2.
\end{align}
In particular, 
\begin{enumerate}
\item $\theta_{\Psi} = 0$ implies $\Psi$ is torsion-free.
\item Assuming $M$ is compact, $\zeta_{5}=0$ implies $\Psi$ is torsion-free.
\end{enumerate}
\end{prop}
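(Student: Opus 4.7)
The plan is to derive the pointwise identity \eqref{f:spin7dilatino} by computing the eight-form $d(H_{\Psi} \wedge \Psi)$ in two different ways and equating them, using the strong torsion hypothesis $dH_\Psi = 0$. First, substituting $d\Psi = \theta_\Psi \wedge \Psi + \zeta_5$ into the $\Spin(7)$ torsion formula yields the clean expression $H_\Psi = \tfrac{1}{6}\star_{\Psi}(\theta_\Psi \wedge \Psi) - \star_{\Psi}\zeta_5$. Under the $\Spin(7)$ decomposition $\Lambda^3 = \Lambda^3_8 \oplus \Lambda^3_{48}$, these two summands lie in distinct irreducible components, with $\star_{\Psi}(\theta_\Psi \wedge \Psi) = \theta_\Psi^\sharp \lrcorner \Psi \in \Lambda^3_8$ and $\star_{\Psi}\zeta_5 \in \Lambda^3_{48}$. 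Since $\Lambda^3_{48} \wedge \Psi = 0$ by definition and applying $i_X$ to $\Psi \wedge \Psi = 14\, dV_{g_\Psi}$ yields the universal identity $(X \lrcorner \Psi) \wedge \Psi = 7 \star_{\Psi} X^\flat$, we then obtain $H_\Psi \wedge \Psi = \tfrac{7}{6}\star_{\Psi}\theta_\Psi$.

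Next, I would apply $d$ to both sides of this identity. On the one hand, differentiating the right-hand side directly yields
\begin{equation*}
    d(H_\Psi \wedge \Psi) = \tfrac{7}{6}\, d\star_{\Psi}\theta_\Psi = -\tfrac{7}{6}\, d^{\star}\theta_\Psi\, dV_{g_\Psi}.
\end{equation*}
On the other hand, the Leibniz rule together with $dH_\Psi = 0$ gives $d(H_\Psi \wedge \Psi) = -H_\Psi \wedge d\Psi = -H_\Psi \wedge \theta_\Psi \wedge \Psi - H_\Psi \wedge \zeta_5$. Using $H_\Psi \wedge \Psi = \tfrac{7}{6}\star_{\Psi}\theta_\Psi$ once more, the first piece collapses to $\tfrac{7}{6}\theta_\Psi \wedge \star_{\Psi}\theta_\Psi = \tfrac{7}{6}|\theta_\Psi|^2\, dV_{g_\Psi}$. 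For the second piece, the orthogonality of the components $\theta_\Psi \wedge \Psi \in \Lambda^5_8$ and $\zeta_5 \in \Lambda^5_{48}$ in the $\Spin(7)$ decomposition of $\Lambda^5$, combined with the sign $\star_{\Psi}\star_{\Psi} = -\Id$ on five-forms in dimension eight, gives $-H_\Psi \wedge \zeta_5 = -|\zeta_5|^2\, dV_{g_\Psi}$. Equating the two expressions for $d(H_\Psi \wedge \Psi)$ produces \eqref{f:spin7dilatino}.

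The two consequences then follow readily. For item (1), if $\theta_\Psi = 0$ pointwise then \eqref{f:spin7dilatino} forces $|\zeta_5|^2 = 0$, so $\zeta_5 = 0$ and $d\Psi = 0$. For item (2), if $\zeta_5 = 0$ on compact $M$ then \eqref{f:spin7dilatino} reduces to $d^{\star}\theta_\Psi + |\theta_\Psi|^2 = 0$, and integrating over $M$ while noting $\int_M d^{\star}\theta_\Psi\, dV_{g_\Psi} = 0$ forces $\theta_\Psi = 0$, hence $d\Psi = 0$.

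The main bookkeeping obstacle lies in the Hodge-star sign conventions in dimension eight, particularly $\star_{\Psi}\star_{\Psi} = -\Id$ on three- and five-forms, together with verifying the representation-theoretic statements $\star_{\Psi}(\theta_\Psi \wedge \Psi) = \theta_\Psi^\sharp \lrcorner \Psi$ and the orthogonality of the $\Lambda^5_8$ and $\Lambda^5_{48}$ summands. Both can be checked at a point against the model four-form \eqref{f:spin74form}.
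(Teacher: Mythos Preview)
Your proposal is correct and follows essentially the same approach as the paper: both extract the $\Lambda^4_1$-component of $dH_\Psi = 0$ by pairing with $\Psi$, using the decomposition $H_\Psi = \tfrac{1}{6}\theta_\Psi^\sharp\lrcorner\Psi - \star_\Psi\zeta_5$, the identity $(\theta_\Psi^\sharp\lrcorner\Psi)\wedge\Psi = 7\star_\Psi\theta_\Psi$, and the orthogonality of $\Lambda^3_8$ and $\Lambda^3_{48}$. The only cosmetic difference is that the paper writes the computation as expanding $0 = dH_\Psi\wedge\Psi$ directly via Leibniz, whereas you phrase it as computing $d(H_\Psi\wedge\Psi)$ in two ways; these are the same identity rearranged.
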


\begin{proof} The vanishing of the $\Lambda_{1}^{4}$ dimensional component of $dH_{\Psi}$ is equivalent to
\begin{align*}
0 =&\ dH_{\Psi}\wedge\Psi\\
=&\ d (\tfrac{1}{6}\theta_{\Psi}\lrcorner\Psi - \star_{\Psi}\zeta_{5})\wedge\Psi\\
=&\ d(\tfrac{1}{6}\theta_{\Psi}\lrcorner\Psi\wedge\Psi) + (\tfrac{1}{6}\theta_{\Psi}\lrcorner\Psi - \star_{\Psi}\zeta_{5})\wedge(\theta_{\Psi}\wedge\Psi + \star_{\Psi}\zeta_{5})\\
=&\ -\tfrac{7}{6}d\star_{\Psi}\theta_{\Psi} + \tfrac{7}{6} |\theta_{\Psi}|^2 dV_{g_{\Psi}} - |\zeta_{5}|^2 dV_{g_{\Psi}}\\
=&\ \left( \tfrac{7}{6}d^{\star}\theta_{\Psi} + \tfrac{7}{6}|\theta_{\Psi}|^2 - |\zeta_{5}|^2 \right) dV_{g_{\Psi}}.
\end{align*}
This yields equation (\ref{f:spin7dilatino}).  Item (1) then follows easily, and item (2) follows by integrating.
\end{proof}

\begin{thm} \label{t:spin(7)structure} (cf. \cite{ivanov2023riemannianspin7} Theorems 6.1, 6.4) Let $(M, \Psi)$ be a compact strong torsion $\Spin(7)$ manifold.  Then
\begin{enumerate}
    \item $\Rc^{\N} + \tfrac{7}{6} \N \theta_{\Psi} = 0$,
    \item $d \theta_{\Psi} \in \Lambda_{21}^2$,
    \item There exists a unique smooth function $f$ such that $\int_M e^{-f} dV_g = 1$ and $\Rc^{\N} + \N df = 0$,
    \item The vector field $V = \tfrac{7}{6} \theta^{\sharp}_{\Psi} - \N f$ satisfies $L_V \Psi = 0$,
    \item $V$ vanishes at one point if and only if $V = 0$, if and only if $H_{\Psi} = 0$ and $f$ is constant.
\end{enumerate}
\end{thm}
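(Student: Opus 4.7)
The plan is to follow the pattern established for the strong torsion $G_{2}$ case (Theorem \ref{t:g2structure}) with adaptations reflecting the $\Spin(7)$ setting. Items (1) and (2) are proved in \cite{ivanov2023riemannianspin7}, with (1) hinging on an Ivanov-type curvature identity relating the Bismut Ricci curvature of $\Psi$ to $\Rc^{\N} + \tfrac{7}{6}\N\theta_{\Psi}$. Item (3) then follows from item (1) by Proposition \ref{p:reducedGRS}, which moreover guarantees that $V = \tfrac{7}{6}\theta_{\Psi}^{\sharp} - \N f$ is $\N$-parallel, and hence has constant norm.

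For item (4), with $\N V = 0$ in hand the task reduces to showing $L_{V}\Psi = 0$. I would exploit the general identity that for any $\N$-parallel vector field $W$ and any $\N$-parallel differential form $\alpha$, $L_{W}\alpha$ equals (up to sign) the natural action of the 2-form $i_{W} H$ on $\alpha$, viewing the 2-form as an element of $\mathfrak{so}(8)$. Since $\Lambda_{21}^{2} \cong \mathfrak{spin}(7)$ is precisely the stabilizer subalgebra of $\Psi$, the vanishing $L_{V}\Psi = 0$ is equivalent to $i_{V} H_{\Psi} \in \Lambda_{21}^{2}$. Using the explicit decomposition $H_{\Psi} = -\star_{\Psi}\zeta_{5} + \tfrac{1}{6}\theta_{\Psi}\lrcorner\Psi$ together with $V = \tfrac{7}{6}\theta_{\Psi}^{\sharp} - \N f$ and item (2), one contracts with $V$ and tests against $\Psi$ to see that the $\Lambda_{7}^{2}$ projection vanishes.

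The heart of the proposal is item (5). Since $V$ is $\N$-parallel it has constant length, so vanishing at a single point forces $V \equiv 0$. Assuming $V \equiv 0$, we have $\N f = \tfrac{7}{6}\theta_{\Psi}^{\sharp}$, whence $\la \N f, \theta_{\Psi}\ra = \tfrac{7}{6}|\theta_{\Psi}|^{2}$. I would multiply the strong torsion identity (\ref{f:spin7dilatino}) of Proposition \ref{p:strongSpin(7)Properties} by $e^{-f}$ and integrate. Integration by parts yields
\begin{align*}
\int_{M} e^{-f}\, d^{\star}\theta_{\Psi}\, dV_{g_{\Psi}} = -\int_{M} e^{-f} \la \N f, \theta_{\Psi}\ra\, dV_{g_{\Psi}} = -\tfrac{7}{6}\int_{M} e^{-f}\, |\theta_{\Psi}|^{2}\, dV_{g_{\Psi}},
\end{align*}
and substitution into the weighted version of (\ref{f:spin7dilatino}) collapses to
\begin{align*}
\int_{M} e^{-f}\left( \tfrac{7}{36}|\theta_{\Psi}|^{2} + |\zeta_{5}|^{2} \right) dV_{g_{\Psi}} = 0.
\end{align*}
The pointwise identity $|\alpha \lrcorner \Psi|^{2} = 7|\alpha|^{2}$ for a one-form $\alpha$ (a direct check from (\ref{f:spin74form})), combined with the orthogonality of the $\Lambda_{48}^{3}$ and $\Lambda_{8}^{3}$ components, gives $|H_{\Psi}|^{2} = |\zeta_{5}|^{2} + \tfrac{7}{36}|\theta_{\Psi}|^{2}$. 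Hence the displayed vanishing forces $H_{\Psi} \equiv 0$, so $\theta_{\Psi} \equiv 0$ and $\N f = 0$, i.e., $f$ is constant. The reverse implication is immediate from the defining formula for $H_{\Psi}$ together with the orthogonal decomposition. The main technical obstacle I anticipate is the $\Spin(7)$ representation-theoretic bookkeeping needed to verify $i_{V} H_{\Psi} \in \Lambda_{21}^{2}$ in item (4); everything else reduces to adapting the $G_{2}$ template and a clean weighted integration by parts.
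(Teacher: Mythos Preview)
Your proposal is correct, and for items (1)--(3) it matches the paper's proof exactly: cite \cite{ivanov2023riemannianspin7} for (1) and (2), then invoke Proposition~\ref{p:reducedGRS} for (3). Where you differ is in items (4) and (5): the paper simply defers both to \cite{ivanov2023riemannianspin7} Theorem 6.4, while you supply self-contained arguments. Your approach to (4) via the $\mathfrak{so}(8)$-action of $i_{V}H_{\Psi}$ on $\Psi$ is sound, and the ``technical obstacle'' you flag actually dissolves: since $\N V = 0$ one has $i_{V}H_{\Psi} = d(V^{\flat}) = \tfrac{7}{6}\,d\theta_{\Psi}$, which lies in $\Lambda^{2}_{21}$ directly by item (2), so no further representation-theoretic bookkeeping is needed. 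Your argument for (5) is a clean weighted integration of the identity (\ref{f:spin7dilatino}), entirely parallel to the $G_{2}$ rigidity computation in Proposition~\ref{p:G2rigidity}; the orthogonal decomposition $\brs{H_{\Psi}}^{2} = \brs{\zeta_{5}}^{2} + \tfrac{7}{36}\brs{\theta_{\Psi}}^{2}$ is correct and makes the conclusion immediate. In short, the paper's proof is purely by citation, while yours reproves (4) and (5) from the tools already assembled in the paper.
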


\begin{proof} Items (1) and (2) follow from (\cite{ivanov2023riemannianspin7} Theorem 6.1).  Items (3) and (4) follow from (\cite{ivanov2023riemannianspin7} Theorem 6.4), which is stated as an equivalence although case (a) of that theorem holds a priori.  Also item (3) is explained in Proposition \ref{p:reducedGRS} above.  Item (5) is explained in \cite{ivanov2023riemannianspin7} Theorem 6.4.
\end{proof}

\begin{rmk} As our interest is in capturing solitons with nontrivial torsion, due to item (5) above we will assume throughout below that the canonical vector field $V$ is nonvanishing.
\end{rmk}

\subsection{Dimension Reduction}

\begin{lemma} \label{l:spin(7)algebraic} Given $(M, \Psi)$ a $\Spin(7)$ structure and a unit length vector field $V$, then
\begin{enumerate}
    \item $\varphi = i_V \Psi$ is a $G_2$ structure on $\spn \{V\}^{\perp}$,
    \item $\Psi = \mu \wedge \varphi + \star_{\varphi} \varphi$,
    \item $g_{\Psi} = \mu \otimes \mu + g_{\varphi}$.
\end{enumerate}

\begin{proof} This follows from the discussion for instance in (\cite{foscolo2021complete} \S 3.2.1), noting we are in the special case when $V$ has unit length.
\end{proof}
\end{lemma}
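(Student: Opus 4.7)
All three assertions are purely algebraic, concerning the $\Spin(7)$-module structure of $\Lambda^\bullet(\mathbb{R}^8)^*$, so the plan is to verify them pointwise in a suitably chosen frame and invoke naturality. Fix $p \in M$; the entire argument will take place in $T_p M$, and the global statement on $M$ follows by applying it at each point.

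My first step is to set up an adapted frame. Since $\Spin(7) \subset SO(8)$ acts transitively on the unit sphere $S^7$ with stabilizer isomorphic to $G_2$, and $V(p)$ is a unit vector, I may choose an oriented orthonormal basis $\{e_1,\ldots,e_8\}$ of $T_p M$ for which simultaneously $V(p) = e_8$ and $\Psi(p) = \Psi_0$ is the model four-form of (\ref{f:spin74form}). In this frame $\mu = V^\flat = e^8$, and the three claims reduce to explicit identities among model forms on $\mathbb{R}^8$.

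Next I would establish items (1) and (2) simultaneously by sorting the fourteen monomials of $\Psi_0$ according to whether or not they involve $e^8$. The seven monomials containing $e^8$ factor as $e^8 \wedge \alpha$ with $\alpha = i_{e_8}\Psi_0 \in \Lambda^3 \spn\{e^1,\ldots,e^7\}$, and by direct inspection $\alpha$ coincides, after a harmless permutation of the remaining basis labels, with a standard $G_2$ three-form of the type (\ref{f:G2pointwise}). The remaining seven monomials of $\Psi_0$ assemble into a four-form on the hyperplane $\spn\{e_1,\ldots,e_7\}$ which a short computation identifies with $\star_\varphi \varphi$ relative to the induced Euclidean metric and the orientation dual to $e_8$. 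Taken together these two observations yield both the identification of $\varphi = i_V \Psi$ as a $G_2$ three-form (item (1)) and the decomposition $\Psi = \mu \wedge \varphi + \star_\varphi \varphi$ (item (2)). Item (3) then follows immediately: both $g_\Psi$ and $g_\varphi$ are recovered as restrictions of the Euclidean inner product to their respective subspaces, and $\brs{V} = 1$ forces the stated orthogonal splitting.

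The main obstacle I foresee is purely one of sign bookkeeping, namely tracking signs through the interior product and the Hodge star to confirm that the $e^8$-free piece of $\Psi_0$ really reconstitutes as $\star_\varphi \varphi$ for the $\varphi$ arising from the $e^8$-containing piece. This is tedious but routine. An alternative which bypasses the explicit verification altogether is to appeal to the standard description of the $\Spin(7)$-to-$G_2$ reduction via the fiber bundle $S^7 = \Spin(7)/G_2$, as recorded for instance in \cite{foscolo2021complete}, \S 3.2.1.
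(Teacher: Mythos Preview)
Your proposal is correct and follows essentially the same approach as the paper, which simply defers to \cite{foscolo2021complete} \S 3.2.1 for the pointwise algebraic verification you sketch in detail; indeed you cite the same reference yourself as an alternative. Your write-up is in fact more explicit than the paper's one-line citation, and the only caveat you identify (sign bookkeeping in matching the $e^8$-free piece of $\Psi_0$ with $\star_\varphi\varphi$) is genuine but routine.
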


\begin{thm} \label{t:Spin(7)reduction} Let $(M, \Psi)$ be a compact strong torsion $\Spin(7)$ structure.  Fix $V$ as in Theorem \ref{t:spin(7)structure} and let $\varphi = i_V \Psi$.  Then
\begin{enumerate}
    \item $\varphi$ is a skew-torsion $G_2$ structure,
    \item $H_{\Psi} = CS(\mu) + H_{\varphi}$, where $H_{\varphi} = - \star_{\varphi} d \varphi + \star_{\varphi} (\theta_{\varphi} \wedge \varphi) + \tfrac{1}{6} \IP{d \varphi, \star_{\varphi} \varphi} \varphi$,
    \item $(g_{\varphi}, \tfrac{7}{6} d \theta_{\Psi}, H_{\varphi},f)$ is a string generalized Ricci soliton,
    \item $d \theta_{\Psi} \in \Lambda^2_{14}$,
    \item The intrinsic torsion components of $\varphi$ satisfy
    \begin{gather*}
        \tau_0 = - \tfrac{6}{7} ,\quad \tau_2 = 0, \quad 
        e^{f} d (e^{-f} \star_{\varphi} \varphi) = 0,\\
        \star_{\varphi}\tau_{3} = \tfrac{3}{28}\theta_{\varphi}\wedge\varphi - i_{V}\zeta_{5},
    \end{gather*}
    \item $\theta_{\varphi} = df$.
\end{enumerate}
\begin{proof} 
First observe by Lemma \ref{l:reducedBC} that $H_{\Psi} = CS(\mu) + \hat{H}$ for a basic three-form $\hat{H}$.  By Lemma \ref{l:reducedBC}(3) and Lemma \ref{l:spin(7)algebraic}(3) it follows that $\hat{\N} g_{\varphi} = 0$.  Moreover since $L_{V} \Psi = 0$ by Theorem \ref{t:spin(7)structure}(4), it follows from Lemma \ref{l:reducedBC} that $\hat{\N} \varphi = 0$, hence item (1) follows.  Moreover by uniqueness of the Bismut connection, it follows that $\hat{H} = H_{\varphi}$, establishing item (2).  Item (3) follows from Proposition  \ref{p:reducedGRS}.

To establish item (4), we start with the fact that $d \theta_{\Psi} \in \Lambda^2_{21}$ by Theorem \ref{t:spin(7)structure}(2), which is equivalent to $\star_{\Psi} (\Psi \wedge d \theta_{\Psi}) = d \theta_{\Psi}$.  Now using the algebraic decomposition of $\Psi$ this implies
\begin{align*}
    d \theta_{\Psi} =&\ \star_{\Psi} \left( \left( \mu \wedge \varphi + \star_{\varphi} \varphi \right) \wedge d \theta_{\Psi} \right)\\
    =&\ - \star_{\varphi} (\varphi \wedge d \theta_{\Psi} + \mu \wedge \star_{\varphi} (\star_{\varphi} \varphi \wedge d \theta_{\Psi}).
\end{align*}
As $d \theta_{\Psi}$ is basic, taking the interior product with $V$ yields $0 = \star_{\varphi} (\star_{\varphi} \varphi \wedge d \theta_{\Psi})$, which then implies $0 = \star_{\varphi} \varphi \wedge d \theta_{\Psi}$, as claimed.

To establish item (5), first observe that we have already established that $\varphi$ is skew-torsion, which by (\cite{friedrich2002parallel} Theorem 4.8) implies that $\tau_2 = 0$.  To derive the remaining components we use item (4) together with (\cite{fowdar2020} Proposition 3.2), which in our notation implies that there exists a basic function $h$ such that
\begin{align*}
    \tau_0 = - h, \quad 
    \theta_{\Psi} = h \mu + \tfrac{6}{7} \theta_{\varphi}.
\end{align*}
The second equation easily implies
\begin{align*}
    0 = \mu \left( h - \tfrac{6}{7} \right) + \tfrac{6}{7} (\theta_{\varphi} - df).
\end{align*}
The vanishing of the vertical component gives $h = \tfrac{6}{7}$, yielding then the claimed equation for $\tau_0$.  Moreover the vanishing of the horizontal component yields item (6).  The equation for $d (e^{-f} \star_{\varphi} \varphi)$ follows directly using the vanishing of $\tau_2$ and item (6).

In order to compute $\star\tau_{3}$ for the dimensionally-reduced $G_2$ structure, we start by taking the derivative of the Spin(7) four-form decomposed into horizontal and vertical components \ref{l:spin(7)algebraic}.
\begin{align*}
d\Psi &= d\mu \wedge \varphi - \mu \wedge d\varphi + d\star_{\varphi}\varphi\\
&=d\mu \wedge \varphi - \mu\wedge (-\tfrac{6}{7}\star_{\varphi}\varphi + \tfrac{3}{4}\theta_{\varphi}\wedge\varphi + \star_{\varphi}\tau_{3}) + \theta_{\varphi}\wedge\star_{\varphi}\varphi
\end{align*}

We now equate the derivative of the Spin(7) four-form expressed in this manner with the decomposition we get from the Fernandez classification \cite{fernandez1986classification}.
\begin{align}
d\Psi &= \theta_{\Psi}\wedge\Psi + \zeta_{5} = (\tfrac{6}{7}\mu + \tfrac{6}{7}\theta_{\varphi})\wedge(\mu\wedge\varphi + \star_{\varphi}\varphi) + \zeta_{5}.
\end{align}
Isolating the the vertical terms in the above expressions allows us to solve for $\star_{\varphi}\tau_{3}$.
\end{proof}
\end{thm}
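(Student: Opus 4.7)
The overall strategy is to reduce items (1)--(3) to the general machinery of Section \ref{s:DR}, and then extract the detailed torsion data in items (4)--(6) by comparing two different expressions for the exterior derivatives of the defining forms, exploiting the algebraic splitting of Lemma \ref{l:spin(7)algebraic}.

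First I would dispose of items (1)--(3) essentially for free. Since $\N \Psi = 0$ and $\N V = 0$, the three-form $\varphi = i_V \Psi$ is $\N$-parallel. By Lemma \ref{l:reducedBC}(5), $\varphi$ and $\star_{\varphi}\varphi$ (appearing as the two basic summands of the decomposition $\Psi = \mu\wedge\varphi + \star_{\varphi}\varphi$ of Lemma \ref{l:spin(7)algebraic}) descend to $\hat{\N}$-parallel forms on the horizontal distribution. Combined with $\hat{\N}\hat{g} = 0$ and $\hat{g} = g_{\varphi}$ from Lemma \ref{l:spin(7)algebraic}(3), this shows $\varphi$ is a skew-torsion $G_2$ structure, giving item (1). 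Item (2) then follows from Lemma \ref{l:reducedBC}(3)--(4) and the uniqueness of the Bismut connection of Friedrich-Ivanov in the $G_2$ setting, since the formula for $H_{\varphi}$ in Definition \ref{d:strongG2T} is the canonical expression for the basic piece $\hat{H}$. Item (3) is an application of Proposition \ref{p:reducedGRS}, noting that $\mu = V^{\flat} = \tfrac{7}{6}\theta_{\Psi} - df$ forces $F_{\mu} = \tfrac{7}{6}\, d\theta_{\Psi}$.

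For item (4), I would combine the $\Lambda^2_{21}(\Psi)$ characterization $\star_{\Psi}(\Psi\wedge d\theta_{\Psi}) = d\theta_{\Psi}$ (Theorem \ref{t:spin(7)structure}(2)) with the splitting $\Psi = \mu\wedge\varphi + \star_{\varphi}\varphi$. Because $V$ is Killing, $d\theta_{\Psi}$ is basic; expanding $\star_{\Psi}$ in horizontal and vertical components and taking the interior product with $V$ should isolate $\star_{\varphi}\varphi\wedge d\theta_{\Psi}=0$, which by the $G_2$ representation theory of Section \ref{s:G2} is exactly the condition $d\theta_{\Psi} \in \Lambda^2_{14}(\varphi)$.

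For items (5) and (6), the core idea is to compute $d\Psi$ in two ways. On one hand, $d\Psi = \theta_{\Psi}\wedge\Psi + \zeta_5$ from the Fernandez-de Leon decomposition. On the other hand, using $\Psi = \mu\wedge\varphi + \star_{\varphi}\varphi$ together with $d\mu = \tfrac{7}{6}d\theta_{\Psi}$ and the Fernandez-Gray expansion $d\varphi = \tau_0\star_{\varphi}\varphi + 3\tau_1\wedge\varphi + \star_{\varphi}\tau_3$, one gets an expression in terms of the $G_2$ torsion forms. I would substitute $\theta_{\Psi} = \tfrac{6}{7}(\mu + df)$ (the rearrangement of the definition of $V$) and match horizontal against vertical components: the vertical scalar piece should pin down $\tau_0 = -\tfrac{6}{7}$, the horizontal $\Lambda^1$ piece should give $\theta_{\varphi} = df$ (establishing item (6) and hence $\tau_1 = \tfrac{1}{4} df$), and the remaining $\Lambda^3_{27}$ piece after tracing out the known contributions should yield $\star_{\varphi}\tau_3 = \tfrac{3}{28}\theta_{\varphi}\wedge\varphi - i_V \zeta_5$. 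The vanishing $\tau_2 = 0$ is already implicit in the skew-torsion condition of item (1), and then the coclosedness $e^f d(e^{-f}\star_{\varphi}\varphi) = 0$ drops out of the $G_2$ identity $d\star_{\varphi}\varphi = \theta_{\varphi}\wedge\star_{\varphi}\varphi = df\wedge \star_{\varphi}\varphi$.

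The main obstacle is the last comparison of $d\Psi$ expressions in item (5): it is not conceptually hard, but requires careful bookkeeping of which summands are purely horizontal, which contain the factor $\mu$, and how $\star_{\Psi}$ interacts with $\star_{\varphi}$ under the splitting of Lemma \ref{l:spin(7)algebraic}. A useful check at each stage is that after $i_V$ contraction the identities must reduce to expressions on the horizontal distribution that respect the $G_2$ irreducible decomposition of three-forms; any residual term outside these components signals an error in the bookkeeping.
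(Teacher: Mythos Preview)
Your outline for items (1)--(4) is essentially identical to the paper's proof: Lemma \ref{l:reducedBC} and Lemma \ref{l:spin(7)algebraic} for (1)--(2), Proposition \ref{p:reducedGRS} for (3), and the $\Lambda^2_{21}$ characterization plus contraction with $V$ for (4).

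The only substantive difference is how you obtain $\tau_0 = -\tfrac{6}{7}$ and $\theta_{\varphi} = df$ in items (5)--(6). The paper invokes \cite{fowdar2020} Proposition 3.2, which gives the general relation $\theta_{\Psi} = h\mu + \tfrac{6}{7}\theta_{\varphi}$ with $\tau_0 = -h$; comparing with $\theta_{\Psi} = \tfrac{6}{7}(\mu + df)$ then reads off $h = \tfrac{6}{7}$ and $\theta_{\varphi} = df$ in one line. You instead propose to extract these directly from the $d\Psi$ comparison. That route is valid, but your sketch omits a necessary ingredient: the vertical $\Lambda^4_1$ and $\Lambda^4_7$ pieces of the comparison receive contributions from $i_V\zeta_5$, so the ``vertical scalar piece'' does not by itself pin down $\tau_0$. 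You must use the condition $\zeta_5 \in \Lambda^5_{48}$ (orthogonality to $\Lambda^5_8 = \{\alpha\wedge\Psi\}$) to show $(i_V\zeta_5)_1 = 0$ and to relate $(i_V\zeta_5)_7$ to the horizontal component of $\zeta_5$; combining the vertical and horizontal $\Lambda^1$ pieces with this constraint then solves for $\tau_1$. This is effectively a rederivation of Fowdar's formula in your special case, so the paper's citation buys a cleaner argument while your route is more self-contained but requires the extra bookkeeping you flagged. Once $\tau_0$ and $\theta_\varphi$ are known, both proofs compute $\star_\varphi\tau_3$ identically from the vertical part of the $d\Psi$ comparison.
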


\begin{rmk} \label{r:spin7asG2instanton} In particular, the reduced structure is an integrable $G_2$ structure of constant type, and moreover (canonically) conformally co-closed.  Furthermore, due to item (2), from \cite{lotay2024coupled} we obtain a canonically associated $G_2$-instanton on $(TM / \IP{V}) \oplus \mathbb R$.
\end{rmk}

\begin{thm} \label{t:spin(7)reductionconverse} Let $(M, \varphi)$ be a constant-type skew-torsion $G_2$ structure such that $\theta_{\varphi} = df$, further endowed with $F \in \Lambda^2(M)$, $d F = 0, [F] \in H^2(M, \mathbb Z)$.  Further assume one of the following equivalent conditions:
\begin{enumerate}
    \item $d H_{\varphi} + F \wedge F = 0$,
    \item $\gD_{\varphi} \varphi + \tfrac{7}{6} \tau_0 \star_{\varphi} d \varphi - d^{\star} (df \wedge \varphi) + d (i_{\N f} \varphi) + \star_{\varphi} (F \wedge F) = 0.$
\end{enumerate}
Let $\pi: \bar{M} \to M$ denote the $U(1)$ bundle associated to $[F]$, endowed with a principal connection $\mu$ such that $d \mu = F$.  Then $\Psi = \mu \wedge \varphi + \star_{\varphi} \varphi$ defines a strong torsion $\Spin(7)$ structure on $\bar{M}$.
\begin{proof} We show that items (1) and (2) are equivalent, after which the rest of the proof is formally identical to Theorem \ref{t:G2reductionconverse}.  Starting with the expression (\ref{f:G2torsion}) for $H_{\varphi}$, we differentiate, using that $\varphi$ is constant type and $\theta_{\varphi} = df$ to obtain
\begin{equation*}
dH_{\varphi} = -d\star_{\varphi}d\varphi + d\star_{\varphi} (d f \wedge\varphi) + \tfrac{7}{6}\tau_{0}d\varphi.
\end{equation*}
Furthermore, since $\varphi$ has skew-torsion, we can also compute
\begin{align*}
dd^{\star}\varphi =&\ -d\star_{\varphi}(\theta_{\varphi}\wedge\star_{\varphi}\varphi) = -d(i_{\theta_{\varphi}^{\sharp}} \varphi) = - d( i_{\N f} \varphi).
\end{align*}
Putting these computations together yields
\begin{align*}
    \star_{\varphi} \left(d H_{\varphi} + F \wedge F \right) =&\ \star_{\varphi} \left( - d \star_{\varphi} d \varphi + d\star_{\varphi} (d f \wedge\varphi) + \tfrac{7}{6}\tau_{0}d\varphi + F \wedge F\right)\\
    =&\ d^{\star} d \varphi - d^{\star} (df \wedge \varphi) + \tfrac{7}{6} \tau_0 \star_{\varphi} d \varphi + \star_{\varphi} (F \wedge F)\\
    =&\ d^{\star} d \varphi + d d^{\star} \varphi + \tfrac{7}{6} \tau_0 \star_{\varphi} d \varphi - d^{\star} (df \wedge \varphi) + d (i_{\N f} \varphi) + \star_{\varphi} (F \wedge F)\\
    =&\ \gD_{\varphi} \varphi + \tfrac{7}{6} \tau_0 \star_{\varphi} d \varphi - d^{\star} (df \wedge \varphi) + d (i_{\N f} \varphi) + \star_{\varphi} (F \wedge F).
\end{align*}
The equivalence of (1) and (2) follows.
\end{proof}
\end{thm}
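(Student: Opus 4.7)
The plan is to establish the theorem in two stages: first verify the equivalence of conditions (1) and (2), then use (1) to construct the strong torsion $\Spin(7)$ structure $\Psi = \mu \wedge \varphi + \star_{\varphi} \varphi$ on $\bar{M}$. The second stage is formally parallel to the proof of Theorem \ref{t:G2reductionconverse}; the main content lies in the first stage.

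For the $\Spin(7)$ construction, I would appeal directly to the reversibility principle of Remark \ref{r:reversibility}. Chern-Weil theory gives the principal $U(1)$-bundle $\pi: \bar{M} \to M$ with connection one-form $\mu$ satisfying $d \mu = \pi^* F$, and Lemma \ref{l:spin(7)algebraic} certifies pointwise that $\Psi$ is a genuine $\Spin(7)$ four-form on $\bar{M}$ with associated metric $g_{\Psi} = \mu \otimes \mu + \pi^* g_{\varphi}$. Setting $H := CS(\mu) + \pi^* H_{\varphi}$, Remark \ref{r:reversibility} asserts that $d H = 0$ on $\bar{M}$ (using condition (1) as the required Bianchi identity), and that both $\mu \wedge \pi^* \varphi$ and $\pi^* \star_{\varphi} \varphi$ are parallel for the Bismut connection $\N$ of $(g_{\Psi}, H)$. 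Hence $\N \Psi = 0$, so by the uniqueness of Ivanov's $\Spin(7)$ connection with skew torsion $H = H_{\Psi}$, making $\Psi$ a strong torsion $\Spin(7)$ structure.

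For the equivalence of (1) and (2), I would start from the formula for $H_{\varphi}$ in Definition \ref{d:strongG2T}. The constant-type hypothesis makes $\tfrac{1}{6}\IP{d\varphi, \star_{\varphi}\varphi} = \tfrac{7}{6}\tau_{0}$ constant, and combined with $\theta_{\varphi} = df$ one obtains
\begin{equation*}
d H_{\varphi} = -\, d \star_{\varphi} d\varphi + d \star_{\varphi}(df \wedge \varphi) + \tfrac{7}{6} \tau_{0} \, d\varphi.
\end{equation*}
Applying $\star_{\varphi}$ (involutive in dimension seven) converts the first two terms into $d^{\star} d\varphi$ and $-d^{\star}(df \wedge \varphi)$, respectively, up to a sign bookkeeping that must be carried through carefully. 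In parallel, the skew-torsion identity $d^{\star} \varphi = -\star_{\varphi}(\theta_{\varphi} \wedge \star_{\varphi} \varphi) = -i_{\N f} \varphi$ (valid because $\tau_{2}=0$) gives $d d^{\star} \varphi = -d(i_{\N f} \varphi)$. Inserting the trivially-vanishing sum $dd^{\star}\varphi + d(i_{\N f}\varphi)$ into the expression then completes $d^{\star}d\varphi$ into the full Hodge Laplacian $\gD_{\varphi}\varphi$, yielding
\begin{equation*}
\star_{\varphi}(d H_{\varphi} + F \wedge F) = \gD_{\varphi} \varphi + \tfrac{7}{6} \tau_{0} \star_{\varphi} d\varphi - d^{\star}(df \wedge \varphi) + d(i_{\N f} \varphi) + \star_{\varphi}(F \wedge F).
\end{equation*}
Since $\star_{\varphi}$ is an isomorphism, the vanishing of the left-hand side is equivalent to the vanishing of the right-hand side, which is precisely the equivalence of (1) and (2).

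The hard part is the careful sign-tracking in the Hodge star and codifferential identities on a seven-manifold, where the formula $d^{\star} = \pm \star d \star$ carries signs depending on both dimension and form degree; a single misstep flips a term and propagates through the whole computation. A subsidiary point is verifying that the identification $d^{\star} \varphi = -i_{\N f}\varphi$ uses only $\tau_{2} = 0$ and $\theta_{\varphi} = df$, and no further torsion hypothesis beyond those already assumed in the statement.
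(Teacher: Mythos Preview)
Your proposal is correct and follows essentially the same approach as the paper's proof: differentiate the formula for $H_{\varphi}$ using the constant-type and $\theta_{\varphi}=df$ hypotheses, apply $\star_{\varphi}$, and insert the identically-zero combination $dd^{\star}\varphi + d(i_{\N f}\varphi)$ (coming from $d^{\star}\varphi = -i_{\N f}\varphi$ for skew-torsion $G_2$ structures) to complete $d^{\star}d\varphi$ into $\gD_{\varphi}\varphi$. The construction of the $\Spin(7)$ structure on $\bar{M}$ via Remark~\ref{r:reversibility} and Lemma~\ref{l:spin(7)algebraic} is likewise exactly the argument the paper indicates by its reference to Theorem~\ref{t:G2reductionconverse}.
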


\begin{cor} \label{c:spin(7)splitting} In the setup of Theorem \ref{t:Spin(7)reduction}, the following are equivalent:
\begin{enumerate}
    \item $d H_{\varphi} = 0$,
    \item $d \mu = d \theta_{\Psi} = 0$,
    \item $D \mu = 0$, i.e. $M$ locally splits as a Riemannian product.
\end{enumerate}
\begin{proof} For a $G_2$ structure $\varphi$, a differential form $\beta \in \Lambda^{2}_{14}$ satisfies \cite{bryant2003some}
\begin{align*}
    \star_{\varphi} (\beta \wedge \beta) = - \tfrac{1}{7} \brs{\beta}^2 \varphi + \gamma,
\end{align*}
where $\gamma \in \Lambda^{3}_{27}$.  Applying this to $d \mu = d \theta_{\Psi}$ and using that $d H_{\varphi} = d \mu \wedge d \mu$ the equivalence of items (1) and (2) easily follows.  On the other hand since $V$ is Killing we also have
\begin{align*}
    D \mu = L_{V} g + d \mu = d \theta_{\varphi},
\end{align*}
and the equivalence of (2) and (3) follows.
\end{proof}
\end{cor}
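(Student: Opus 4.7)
The strategy is to establish the two equivalences separately, with (1) $\iff$ (2) coming from the anomaly cancellation together with a $G_2$ representation-theoretic identity, and (2) $\iff$ (3) coming from the fact that $V$ is Killing and de Rham's theorem.

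For (1) $\iff$ (2), I would begin with the Bianchi identity. Since Theorem \ref{t:Spin(7)reduction}(2) gives $H_{\Psi} = CS(\mu) + H_{\varphi}$ and $d H_{\Psi} = 0$, differentiating yields
\begin{equation*}
d H_{\varphi} + d\mu \wedge d\mu = 0.
\end{equation*}
Thus the equivalence reduces to the claim that $d\mu \wedge d\mu = 0$ forces $d\mu = 0$. Here the key input is Theorem \ref{t:Spin(7)reduction}(4), which tells us $d\mu = d\theta_{\Psi}$ lies in the $G_2$-irreducible piece $\Lambda^2_{14}$ of the induced transverse geometry. I would then invoke the Bryant identity \cite{bryant2003some}: for any $\beta \in \Lambda^2_{14}$,
\begin{equation*}
\star_{\varphi}(\beta \wedge \beta) = -\tfrac{1}{7}\brs{\beta}^2 \varphi + \gamma, \qquad \gamma \in \Lambda^3_{27}.
\end{equation*}
Since the decomposition $\Lambda^3 = \Lambda^3_1 \oplus \Lambda^3_7 \oplus \Lambda^3_{27}$ is orthogonal, pairing with $\varphi$ isolates $\brs{\beta}^2$, so $\beta \wedge \beta = 0$ immediately gives $\beta = 0$. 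Applied to $\beta = d\mu$, this finishes the equivalence.

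For (2) $\iff$ (3), I would use that $V$ is Killing, which is automatic from $\N V = 0$. Decomposing $D\mu$ into symmetric and antisymmetric parts in terms of $L_V g$ and $d\mu$ (both evaluated on $\mu = V^{\flat}$), the vanishing of $L_V g$ yields $D\mu \propto d\mu$, so $D\mu = 0$ if and only if $d\mu = 0$. Geometrically $D\mu = 0$ means the unit vector field $V$ is parallel, so de Rham's decomposition theorem produces the local Riemannian splitting.

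The proof should proceed in direct parallel with Corollary \ref{c:G2splitting}, the only substantive change being the substitution of Bryant's $\Lambda^2_{14}$ identity for the $SU(3)$ identity on $\Lambda^{1,1}_0$. No genuine obstacle is anticipated: the entire argument rests on the algebraic identity for $\Lambda^2_{14}$ and the already-established placement $d\theta_{\Psi} \in \Lambda^2_{14}$ from the dimension reduction theorem.
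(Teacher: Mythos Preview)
Your proposal is correct and follows essentially the same route as the paper: both arguments use the Bianchi identity $dH_{\varphi} + d\mu \wedge d\mu = 0$ together with Bryant's identity $\star_{\varphi}(\beta \wedge \beta) = -\tfrac{1}{7}\brs{\beta}^2\varphi + \gamma$ for $\beta \in \Lambda^2_{14}$ to get (1) $\iff$ (2), and the Killing condition $L_V g = 0$ to reduce $D\mu$ to $d\mu$ for (2) $\iff$ (3). Your presentation is in fact slightly more careful about the symmetric/antisymmetric decomposition of $D\mu$ and the invocation of de Rham, but there is no substantive difference.
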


\begin{ex} \label{ex:nonintSpin7OneA} (cf. \cite{ivanov2023riemannianspin7} Ex. 6.5a)
Consider the group $G = U(1)\times SU(2) \times SU(2) \times U(1)$ with the structure equations
\begin{align*}
& de_{0} = 0 \hspace{5mm} de_{1} = e_{23} \hspace{5mm} de_{2} = e_{31} \hspace{5mm} de_{3} = e_{12},\\
& de_{4} = e_{56} \hspace{5mm} de_{5} = e_{64} \hspace{5mm} de_{6} = e_{45} \hspace{5mm} de_{7} = 0.\nonumber
\end{align*}
We obtain a $\Spin(7)$ structure via equation (\ref{f:spin74form}) in this frame.  The Lee form is computed to be $\theta_{\Psi} = \tfrac{6}{7}(e_{4}-e_{3})$ and this Lee form is not closed.  The vector field $\theta_{\Psi}^{\sharp}$ generates a $U(1)$ action with quotient $G / U(1) \cong \mathbb CP^1 \times S^3 \times T^2$.  Using Lemma \ref{l:spin(7)algebraic} we recover the $S^1$-reduced $G_2$ structure
\begin{align*}
\varphi =&\ \tfrac{6}{7} ( -e_{026} - e_{037} - e_{047} + e_{016} + e_{025} - e_{015}\\ 
&\quad  + e_{356} + e_{456} - e_{157} + e_{124} + e_{123} - e_{257} - e_{167} - e_{267}),
\end{align*}
which by Theorem \ref{t:Spin(7)reduction} is a co-closed $G_2$ structure such that $(g_{\varphi}, \tfrac{7}{6} d \theta_{\Psi}, H_{\varphi})$ defines a string generalized Ricci soliton.  Note that the entire structure is furthermore invariant under the obvious $U(1)^2$ action.
\end{ex}


\begin{ex} \label{ex:nonintSpin7Two} (cf. \cite{ivanov2023riemannianspin7} Ex. 6.6)
Consider $G = SU(3)$ with structure equations
\begin{align*}
& de_{1} = de_{2} = de_{4} = 0 \hspace{5mm} de_{3} = -e_{12} \hspace{5mm} de_{5} = -\tfrac{1}{2}e_{34} \hspace{5mm} de_{6} = \tfrac{1}{2}e_{15} - \tfrac{1}{2}e_{24} \\
& de_{7} = -\tfrac{1}{2}e_{14} - \tfrac{1}{2}e_{25} + \tfrac{1}{2}e_{36} \hspace{5mm} de_{0} = -\tfrac{\sqrt{3}}{2}\Big(e_{45} + e_{67} \Big).
\end{align*}
We define a Spin(7) structure $\Psi$ via equation (\ref{f:spin74form}) in this frame.  As the flat Cartan connection associated to the left action preserves $\Psi$ it is the canonical connection with skew-symmetric torsion associated to $\Psi$.  Moreover as the Cartan three-form is closed, $\Psi$ is a strong torsion structure.  We may compute the Lee form from (\ref{f:spin(7)LeeForm})
\begin{align*}
\theta_{\Psi} = \tfrac{1}{7}( (\sqrt{3} + 1)e_{2} -e_{3} - 2e_{4} - (\sqrt{3}-1)e_{5} - e_{6} + e_{7}).
\end{align*}
As such the Lee form $\theta_{\Psi}$ is not closed
\begin{equation*}
d\theta_{\Psi} = \tfrac{1}{14}(2e_{12} + (\sqrt{3}-1)e_{34} - e_{15} + e_{24} -e_{14} -e_{25}+ e_{36}).
\end{equation*}
Using Lemma \ref{l:spin(7)algebraic} we get the reduced $G_2$ three-form
\begin{align*}
\varphi =&\ \tfrac{1}{7} (e_{012} + e_{034} + e_{056} + e_{145} + e_{235} + e_{136} - e_{246} - e_{026}  \\
&\quad - e_{047} + e_{015} + e_{456} + e_{124} - e_{257} - e_{167} + e_{023} + e_{057} \\ 
&\quad + e_{014} - e_{345} - e_{125} + e_{137} - e_{247} ) \nonumber\\
&\quad + \tfrac{\sqrt{3}+1}{7}(-e_{017} - e_{036} - e_{045} + e_{126} + e_{134} - e_{357} + e_{467})\nonumber\\
&\quad + \tfrac{\sqrt{3}-1}{7}(-e_{067} + e_{024} - e_{013} + e_{346} + e_{147} + e_{126} + e_{237})\nonumber\\
&\quad + \tfrac{2}{7}(e_{037} - e_{016} - e_{025} - e_{356} - e_{157} - e_{123} + e_{267}).
\end{align*}
From inspection of the coefficients of $\theta_{\Psi}$ it follows that $\theta_{\Psi}^{\sharp}$ generates an $\mathbb R$-action, and hence the $G_2$ form does not exist on any of the Aloff-Wallach spaces \cite{aloff1975infinite}.
\end{ex}


\end{document}